\definecolor{light-gray1}{gray}{0.90}
\definecolor{light-gray2}{gray}{0.80}
\definecolor{light-gray3}{gray}{0.60}
\newcommand{\wt}{\widetilde}
\newcommand{\vp}{\varphi}
\newcommand{\R} {\mathbb R}
\newcommand{\cuad}{{\sqcap\kern-.68em\sqcup}}
\newcommand{\ve}{\varepsilon}
\newcommand{\be}{\begin{equation}}
\newcommand{\ee}{\end{equation}}
\newcommand{\la}{\lambda(t)}
\definecolor{darkgreen}{rgb}{0.2,0.7,0.1}
\newcommand{\sech}{\mathop{\mbox{\normalfont sech}}\nolimits}
\newcommand{\px}{\partial_x}
\newcommand{\pt}{\partial_t}
\newcommand{\nlop}{(1-\partial_x^2)^{-1}}
\newcommand{\al}{\alpha}
\newcommand{\bt}{\beta}
\def\bm{\left( \begin{array}{cc}}
\def\endm{\end{array}\right)}
\providecommand{\norm}[1]{\left\| #1 \right\|}
\newcommand{\ba}{\begin{equation*}}
\newcommand{\ea}{\begin{equation*}}
\newcommand{\bea}{\begin{eqnarray}}
\newcommand{\eea}{\end{eqnarray}}
\newcommand{\bee}{\begin{eqnarray*}}
\newcommand{\eee}{\end{eqnarray*}}
\newcommand{\ben}{\begin{enumerate}}
\newcommand{\een}{\end{enumerate}}
\numberwithin{equation}{section}
\newtheorem{theorem}{Theorem}[section]
\newtheorem*{theorem*}{Theorem}
\newtheorem{proposition}{Proposition}[section]
\newtheorem{corollary}{Corollary}[section]
\newtheorem{lemma}{Lemma}[section]
\newtheorem{definition}{Definition}[section]
\theoremstyle{remark}
\title[Decay in abcd systems]{Decay of small energy solutions in the ABCD Boussinesq model under the influence of an uneven bottom}
\author[Maul\'en]{Christopher Maul\'en}  
\address{Fakultat f\"ur Mathematik, Universit\"at Bielefeld,  Postfach 10 01 31, 33501 Bielefeld, Germany.}
\email{cmaulen@math.uni-bielefeld.de}
\thanks{Ch.Ma. was funded by the Deutsche Forschungsgemeinschaft (DFG, German Research Foundation) -- Project-ID 317210226 -- SFB 1283, and Chilean research grants ANID Exploration project 13220060 and FONDECYT 1231250.}
\author[Mu\~noz]{Claudio Mu\~noz}
\address{CNRS and Departamento de Ingenier\'{\i}a Matem\'atica and Centro
de Modelamiento Matem\'atico (UMI 2807 CNRS), Universidad de Chile, Casilla
170 Correo 3, Santiago, Chile.}
\email{cmunoz@dim.uchile.cl}
\thanks{C. M.'s work was partly funded by Inria Lille PANDA and Chilean research grants ANID Exploration project 13220060, FONDECYT 1231250 and Centro de Modelamiento Matemático (CMM) BASAL fund FB210005 for center of excellence from ANID-Chile.}
\author[Poblete]{Felipe Poblete}
\address{Instituto de Ciencias F\'isicas y Matem\'aticas, Facultad de Ciencias, Universidad Austral de Chile, Valdivia, Chile.}
\email{felipe.poblete@uach.cl}
\thanks{F.P.'s work was partially supported by by Inria Lille PANDA , ANID Exploration project 13220060, and ANID project FONDECYT 1221076.}
\subjclass{35Q35,35Q51}
\keywords{Boussinesq, abcd, variable bottom, decay}
\begin{document}

\begin{abstract}
The $abcd$ Boussinesq system, introduced by Bona, Chen, and Saut, describes a four-parameter $(a,b,c,d)$ family of models formulated on the time-space domain $\mathbb{R}_t \times \mathbb{R}_x$. It serves as a first-order two-wave approximation to the two-dimensional incompressible, irrotational water wave equations in shallow water, inspired by Boussinesq’s classical derivation. Within the different parameter regimes, the generic regime is described by $b,d>0$ and $a,c<0$ while the system becomes Hamiltonian when $b=d$. Previously, sharp local in space $H^1\times H^1$ decay properties were proved in the case of a large class of $abcd$ model under the small data assumption. In this paper, we generalize [C. Kwak, \emph{et. al.}, \emph{The scattering problem for Hamiltonian ABCD Boussinesq systems in the energy space}. J. Math. Pures Appl. (9) 127 (2019), 121--159] by considering the small data $abcd$ decay problem in the physically relevant \emph{variable bottom regime} described by M. Chen. The nontrivial bathymetry is represented by a smooth space-time dependent function $h=h(t,x)$, which obeys integrability in time and smallness in space. We prove first the existence of small global solutions in $H^1\times H^1$.  Then, for a sharp set of dispersive $abcd$ systems (characterized only in terms of parameters $a, b$ and $c$), every $H^1\times H^1$ small solution must converges to zero inside of  the light cone $|x|\leq |t|$. 
\end{abstract}

\maketitle

\section{Introduction and Main Results}

\subsection{Problem formulation} In this work we investigate  the long time dynamical properties for solutions of the one-dimensional $abcd$ system in the presence of variable bathymetry. Seeking to consider the more realistic variable bottom case, but still under a rigorous mathematical treatment, we present here the first installment of a series of papers dealing with this issue, of key interest for engineers, geophysicists and other scientists. The variable bottom abcd model under consideration has been derived by M. Chen \cite{MChen} and reads as follows: for $(t,x)\in \R\times\R,$
\begin{equation}\label{boussinesq_0}
\begin{cases}
(1- b\,\partial_x^2)\partial_t \eta  + \partial_x\!\left( a\, \partial_x^2 u +u + (\eta + h) u \right) =  (-1 + \tilde a_1 \partial_x^2) \partial_th  , \\
(1- d\,\partial_x^2)\partial_t u  + \partial_x\! \left( c\, \partial_x^2 \eta + \eta  + \frac12 u^2 \right) =  \tilde  c_1  \partial_t^2 \partial_x h.
\end{cases}
\end{equation}
Here $u=u(t,x)$ and $\eta=\eta(t,x)$ are real-valued scalar functions, representing a component of the velocity and the surface in a shallow water free surface, inviscid and irrotational Boussinesq's regime. The variable bottom is represented by the function $h=h(t,x)$, perturbation of the flat bottom which is recovered by setting $h\equiv 0$. 
Along this paper the function $h$ will be assumed as a given data of the problem, having dependence on time and space. Additionally, the parameters $\tilde a_1$ and $\tilde c_1$ measure the magnitude of time/space variations of the uneven bottom in the model, and will be also assumed known along this work. We have chosen for simplicity from \cite{MChen} as reference variation of pressure  $(P_0)_x=0$.

In the case where one has a fixed bottom, the equation was  originally derived by Bona, Chen and Saut \cite{BCS1,BCS2} as a first order, one dimensional asymptotic regime model of the water waves equation, in the vein of the Boussinesq original derivation \cite{Bous}, but maintaining all possible equivalences between the involved physical variables, and taking into account the shallow water regime. The parameters of physical perturbations considered for the expansion are
\[
\al := \frac Ah \ll 1, \quad \bt := \frac{h^2}{\ell^2} \ll 1, \quad \al \sim \bt,
\]
and where $A$ and $\ell$ denote the  typical amplitude and wavelength of the waves, and $h$ represents the constant depth. Equations \eqref{boussinesq_0} belong to a broader class of Boussinesq models, which also include second-order formulations, and these were developed in \cite{BCS1} and \cite{BCL}.
Equations \eqref{boussinesq_0} are part of a hierarchy of Boussinesq models, including second order systems, which were obtained in \cite{BCS1} and \cite{BCL}. A rigorous derivation of model \eqref{boussinesq_0} from the full free surface Euler equations (as well as extensions to higher dimensions) was given by Bona, Colin and Lannes \cite{BCL}, see also Alvarez-Samaniego and Lannes \cite{ASL} for refined results. Since that time, these models have received extensive attention in the literature; see, for instance, \cite{BLS,BCL,LPS,Saut} and references therein for a detailed account of known results. This list is not intended to be exhaustive.

The constants in \eqref{boussinesq_0} satisfy structural constraints  following the conditions \cite{BCS1}
\be\label{Conds0}
\begin{aligned}
& a=\frac12 \left(\theta^2-\frac13\right)\lambda, \quad b=\frac12 \left(\theta^2-\frac13\right)(1-\lambda), \\
& c=\frac12 \left(1- \theta^2 \right)\mu, \quad d=\frac12 \left(1-\theta^2\right)(1-\mu), \\
& \tilde a_1 =\frac12 \left((1-\lambda)\left(\theta^2-\frac13\right)+1-2\theta\right), \quad \tilde c_1=1-\theta, \\
\end{aligned}
\ee
for some $\theta\in [0,1]$ and $\lambda,\mu\in\mathbb R$. Notice that  $a+b =\frac12\left(\theta^2-\frac13\right)$ and $c+d =\frac12 (1-\theta^2)\geq 0$. Moreover, $a+b+c+d = \frac13$ is independent of $\theta$. (This case is referred to as the regime without surface tension $\tau\geq 0$, otherwise one has parameters $(a,b,c,d)$ such that $a+b+c+d = \frac13-\tau$.)

In a previous work \cite{KMPP},  the flat bottom $h\equiv 0$ and \emph{Hamiltonian generic} case \cite[p. 932]{BCS2}
\be\label{Conds2}
b,d >0, \quad a,~c<0, \quad b=d,
\ee
was considered. Indeed, always in the flat bottom case, under \eqref{Conds2} it is well-known \cite{BCS2} that \eqref{boussinesq_0} is globally well-posed in $H^s \times H^s$, $s\geq 1$, for small data, thanks to the preservation of the energy/hamiltonian
\be\label{Energy}
H[\eta,u ](t):= \frac12\int \left( -a (\partial_x u)^2 -c (\partial_x \eta)^2  + u^2+ \eta^2 + u^2\eta \right)(t,x)dx,
\ee
which is conserved by the flow. From now on, we will identify $H^1 \times H^1$ as the standard \emph{energy space} for \eqref{boussinesq_0}. Under the additional condition $3b( a+  c) + 2b^2  < 8 a  c$, decay of small solutions was shown, in the sense that
\begin{equation}\label{decay0}
\lim_{t \to \pm \infty} \|(\eta,u)(t)\|_{H^1\times H^1 (I(t))} =0,\quad I(t)= \left(-\frac{|t|}{\log^2|t|},\frac{|t|}{\log^2|t|} \right).
\end{equation}
 An improvement was obtained in \cite{KM20}, in the sense that more $abcd$ models were proved to follow \eqref{decay0} under less demanding conditions on $(a,b,c)$, see Definition \ref{New Dis_Par} for the detailed new conditions. 
 
 A key element in previous proof is the Hamiltonian character of the model, and the conservation of a macroscopic energy. However, in our case the energy \eqref{Energy} is not conserved anymore, unless the bottom is time independent. Indeed, consider the modified energy for the variable bottom case
\be\label{Energy_new}
H_h[\eta,u](t):= \frac12\int \left( -a (\partial_x u)^2 -c (\partial_x \eta)^2  + u^2+ \eta^2 + u^2(\eta + h) \right)(t,x)dx.
\ee
Then at least formally the following is satisfied: from \eqref{Energy_new} and \eqref{boussinesq_0},
\be\label{Energy_new_00}
\begin{aligned}
\frac{d}{dt} H_h[u ,\eta ](t) = &~{} -a c_1 \int  u  \partial_t^2 \partial_x h +c_1 \int \left( 1+a    +   \eta + h  \right)u (1- \partial_x^2)^{-1}  \partial_t^2 \partial_x h \\
&~{}  + c\int \eta (1 - a_1 \partial_x^2) \partial_th    \\
&~{} +(a_1- 1) \int \left(  (1+c) \eta  + \frac12 u^2 \right)(1- \partial_x^2)^{-1}  \partial_th  \\
&~{}  -a_1 \int \left(  (1+c) \eta  + \frac12 u^2 \right) \partial_th  + \frac12 \int u^2  \partial_t h.
\end{aligned}
\ee
Notice that the influence of the uneven bottom is important in the long time behavior through the previous identity, meaning that the previous results proved in \cite{KMPP} or \cite{KM19} are not easily translated to the uneven bottom regime. No particular sign condition or similar property seems to help to control the output in \eqref{Energy_new_00}. In that sense, we will choose data and conditions on $h$ that ensure globally well-defined solutions with bounded in time energy, a naturally physical condition.

 Assume \eqref{Conds2}. By considering the stretching of variables given by $ u(t/\sqrt{b},x/\sqrt{b})$, $\eta(t/\sqrt{b},x/\sqrt{b})$, and $h(t/\sqrt{b},x/\sqrt{b})$, $a\to a/b$ and $c\to c/b$, we can assume $b=d=1$ and rewrite \eqref{boussinesq_0} as the slightly simplified model
\begin{equation}\label{boussinesq}
\begin{cases}
(1- \partial_x^2)\partial_t \eta  + \partial_x \! \left( a\, \partial_x^2 u +u + (\eta+h) u  \right) =(-1 + a_1 \partial_x^2) \partial_th  , \\
(1- \partial_x^2)\partial_t u  + \partial_x \! \left( c\, \partial_x^2 \eta + \eta  + \frac12 u^2 \right) = c_1  \partial_t^2 \partial_x h,
\end{cases}
\end{equation}
for $(t,x)\in \R\times\R$ and with new conditions (see \eqref{Conds0}-\eqref{Conds2})
\be\label{Conds}
\begin{aligned}
&a,c<0, \quad a +1 =\frac1{2b}\left(\theta^2-\frac13\right),\quad c+1 =\frac1{2b} (1-\theta^2)\geq 0,\\
& a_1=\frac{\tilde{a}_1}{b},\quad c_1=\frac{\tilde{c}_1}{b},
\end{aligned}
\ee
for some $\theta\in [0,1].$  Note that in particular now $c>-1$, and equation \eqref{boussinesq} will be the main subject of this paper.

Compared with the original $abcd$ model, equation \eqref{boussinesq} is characterized by three additional features not present in the flat bottom case: first, the surface $\eta$ is modified by $h$ as part of the nonlinear terms, leading to the most prominent perturbation of the model, and characterizing local and global solutions. Then, two source terms appear, characterized by an important amount of derivatives of the variable bottom. These terms must be understood as perturbations of the original model, and in the long time behavior will represent perturbations of virial terms that need care and control. 

\subsection{Main results}
In this work, we extend the results of \cite{KMPP} for the scalar good Boussinesq model by analyzing the more physically relevant and mathematically challenging abcd Boussinesq system, under the conditions previously described. By employing suitably constructed virial identities, we show that every sufficiently small global solution in $H^1\times H^1$ solutions must decay in time in time-growing intervals in space, remaining inside a restricted portion of the light cone. To express the result precisely, we first introduce a dispersive condition formulated in terms of the system’s parameters.

\medskip

\noindent
{\bf Hypotheses on $a,c$}. Recall that, in the considered Hamiltonian setting, $b=d>0.$  Note that from \eqref{Conds} we have $-1\leq c<0$ and $-1 - \frac{1}{6b} \leq  a <0$. We shall assume the sharp values of the parameters $(a,b,c)$ obtained in \cite{KM20}. 

\begin{definition}[Refined dispersion-like parameters]\label{New Dis_Par}
We say that $(a,b,c)$ satisfying  \eqref{Conds} are \emph{refined dispersion-like parameters} if 
\be\label{dispersion_like_b}
3( a+  c) + 2  < 8 a  c, 
\ee
or they satisfy the following conditions:
\ben
\item either
\begin{equation}\label{ref_dispersion_like2-1}
\begin{cases}\begin{array}{lcl} 
 45 a c >1-  a , &\;&\mbox{for} \quad  -1 \leq  c < -\frac{1}{90}(19+\sqrt{181}), \\ 
18 a  c + a+ c>0, &\;&\mbox{for} \quad -\frac{1}{90} (19+\sqrt{181})\le  c < -\frac{1}3,\\
 27 a c>6 a + 1 , &\;&\mbox{for} \quad -\frac{1}3 \le  c< -\frac{1}9, 
\end{array}\end{cases}
\end{equation}
whenever $ c \le  a < 0$,  
\item or,
\begin{equation}\label{ref_dispersion_like2-2}
\begin{cases}\begin{array}{lcl} 
45 a  c>1-  c , &\;&\mbox{for} \quad -1-\frac{1}{6b} \leq  a < -\frac{1}{90} (19+\sqrt{181}),\\
18 a c +  a+  c>0, &\;&\mbox{for} \quad -\frac{1}{90} (19+\sqrt{181})\le  a < -\frac{1}3,\\
 27 a c>6 c + 1 , &\;&\mbox{for} \quad -\frac{1}3 \le  a< -\frac{1}9,
\end{array}\end{cases}
\end{equation}
whenever $ a \le  c < 0$.
\een
\end{definition}

The conditions \eqref{dispersion_like_b}-\eqref{ref_dispersion_like2-1}-\eqref{ref_dispersion_like2-2} formally say that $(a,c)$, being both negative, need to be sufficiently far from zero. Indeed, for values of $a$ and $c$ sufficiently close to zero,  \eqref{dispersion_like_b} is not satisfied. From the physical point of view, this condition can be understood as having \emph{sufficient dispersion} in \eqref{boussinesq} to allow decay to zero for both components of the flow.  

In the flat bottom case, Definition \ref{New Dis_Par} is sharp in the following sense: below $b=\frac3{16}$, i.e. above $a=c=-\frac{1}{48}$, the \emph{group velocity} $v=\omega'(k)$ of plane waves associated with the linear $abcd$ system  is allowed to vanish at a nonzero wavenumber $k$, implying that scattering may not occur following a standard procedure \cite{KM20}. We shall see that in the moving bottom case additional terms appear so that Definition \ref{New Dis_Par} will become a sort of necessary condition to have local decay. 

\medskip

\noindent
{\bf Hypotheses on the moving bottom $h$}. Let $\varepsilon_0>0$ be a small parameter to be fixed below and assume $\varepsilon\in(0,\varepsilon_0).$ We shall assume that the bottom $h$ satisfies, for some constant $C>0,$
\begin{equation}\label{hyp_h}
\begin{aligned}
& h: [0,\infty)\times \mathbb R \to \mathbb R, 
\\
&  \|h \|_{W^{2,\infty}_t H^1_x}  + \| \partial_t h\|_{L^1_{t}H^1_x}  +\| \partial_t^2 h\|_{L^1_{t}H^1_x} \leq C \varepsilon, \\
& \| \partial_x h \|_{L^1_t L^\infty_x} \leq C.
\end{aligned}
\end{equation}
Notice that in \eqref{hyp_h} smallness is required to allow global small solutions. Additionally, the $L^1_t$ integrability condition on $\|\partial_t^k h\|_{H^1}$, $k=1,2$, seems demanding but it is a standard condition for models with forcing terms. In particular, the forcing in \eqref{boussinesq} has time and space derivatives. Below we will review the literature in this respect, and will find that usually in applications the hypotheses on $h$ are reasonable in the same direction as the ones described above: locally defined in time, absence after some time, local spatial behavior of perturbations, etc.

Let $I(t)$ be defined as the interval
\begin{equation}\label{I(t)}
 I(t):= \left(- \frac{ |t|}{\log |t| \log^2 ( \log |t|)}, \frac{ |t|}{\log |t| \log^2 (\log |t|)}\right),  \quad |t|\geq 11.
\end{equation}
Our main result is the existence and long-time dynamics of small solutions to the variable bottom Hamiltonian $abcd$:

\begin{theorem}[Global existence and decay]\label{MT1}
There exist $C_0,\varepsilon_0>0$ such that, for all $0<\varepsilon<\varepsilon_0$, the following is satisfied. Let $(a,c)$ be parameters such that  Definition \ref{New Dis_Par} is satisfied. Let $h$ be a moving bottom satisfying the hypotheses \eqref{hyp_h}, and consider initial data such that  
\be\label{Smallness}
\|(u_0,\eta_0)\|_{H^1\times H^1}< \ve.
\ee
Then, there exists a unique global solution $(u,\eta)\in C(\R, H^1\times H^1)$ of \eqref{boussinesq} from initial data at time zero $(u_0,\eta_0)$ satisfying
\begin{equation}\label{global}
\sup_{t\geq0}  \|(u,\eta)(t)\|_{H^1\times H^1}  \le C_0 \ve.
\end{equation}
Moreover, for $I(t)$ as in \eqref{I(t)}, there is strong decay:
\be\label{Conclusion_0}
\lim_{t \to \infty}   \|(u,\eta)(t)\|_{(H^1\times H^1)(I(t))} =0.
\ee
\end{theorem}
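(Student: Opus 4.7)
The proof naturally splits into two parts, each following a strategy established in \cite{KMPP} and \cite{KM20}, now in the perturbed setting. \emph{Global existence with bound \eqref{global}.} First I would establish local $H^1\times H^1$ well-posedness for \eqref{boussinesq} by a standard fixed-point argument on the Duhamel formula associated to the bounded linear operators $(1-\partial_x^2)^{-1}\partial_x$ and $(1-\partial_x^2)^{-1}$; the source terms fall in $L^1_{t,\mathrm{loc}}H^1_x$ thanks to \eqref{hyp_h}. To extend globally I would work with the modified energy $H_h$ from \eqref{Energy_new}, whose time derivative is given by \eqref{Energy_new_00}. Each term on the right-hand side is bounded by a product of $\|(u,\eta)(t)\|_{H^1\times H^1}$ (or its square) with one of $\|\partial_t h(t)\|_{H^1}$, $\|\partial_t^2 h(t)\|_{H^1}$, or $\|\partial_t h(t)\|_{H^1}\|h(t)\|_{L^\infty}$, using boundedness of $(1-\partial_x^2)^{-1}\partial_x$ on $L^2$ and the embedding $H^1\hookrightarrow L^\infty$. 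Since these bottom norms are in $L^1_t$ by \eqref{hyp_h}, a Gronwall argument combined with coercivity of $H_h$ (valid provided both $\|(u,\eta)\|_{H^1}$ and $\|h\|_{L^\infty}$ are small, as ensured by $\varepsilon_0$ small) gives \eqref{global} and therefore a global solution by the blow-up alternative.

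\emph{Decay statement \eqref{Conclusion_0}.} With $\lambda(t)=|t|/(\log|t|\log^2\log|t|)$ as in \eqref{I(t)}, I would introduce odd, smooth, bounded weight functions $\varphi$ (essentially a truncated $\tanh$) and its primitive $\Psi(y)=\int_0^y\varphi(s)\,ds$, and set up virial functionals of the form
\[
\mathcal{I}(t)=\int\!\!\left[\alpha_1 u(1-\partial_x^2)\eta+\alpha_2\,\eta(1-\partial_x^2)u+\alpha_3 u^2+\alpha_4 \eta^2\right]\!\psi\!\left(\tfrac{x}{\lambda(t)}\right)dx,
\]
with $\psi\in\{\varphi,\Psi\}$ and carefully chosen coefficients $\alpha_i=\alpha_i(a,c)$, exactly as in \cite{KM20}. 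Differentiating in time along \eqref{boussinesq} and integrating by parts produces a quadratic form in $(u,\partial_x u,\eta,\partial_x\eta)$ against the positive weight $\tfrac{1}{\lambda(t)}\varphi'(x/\lambda(t))$ which, under the refined dispersion conditions \eqref{dispersion_like_b}--\eqref{ref_dispersion_like2-2}, is pointwise positive definite; this is the step where Definition \ref{New Dis_Par} enters in an essential way. The cubic terms are absorbed by the smallness \eqref{global}, and the transport terms coming from $\lambda'(t)/\lambda(t)$ are integrable in time thanks to the choice of $\lambda$. Integration in $t$ and a standard argument then yield the space-time $L^2$ control on $u$, $\eta$, $\partial_x u$, $\partial_x \eta$ against $\varphi'(x/\lambda(t))$, which upgrades via a second layer of virials (tightly adapted from \cite{KMPP}) to the $H^1\times H^1$ local decay \eqref{Conclusion_0}.

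The main obstacle, and the new feature of this paper, is the handling of the bottom-induced source terms $(-1+a_1\partial_x^2)\partial_t h$ and $c_1\partial_t^2\partial_x h$ throughout both the energy and the virial computations. After inverting $(1-\partial_x^2)$, they contribute integrals of the form $\int F(u,\eta)\,\partial_t^k h\,\psi(x/\lambda(t))\,dx$ and $\int G(u,\eta)\,\partial_x h\,\psi(x/\lambda(t))\,dx$ carrying no decay in $\lambda(t)$; they must be absorbed through time integrability alone. This is exactly the purpose of the $L^1_tH^1_x$ conditions on $\partial_t h$ and $\partial_t^2 h$, and of the $L^1_tL^\infty_x$ condition on $\partial_x h$ in \eqref{hyp_h}: each such perturbation term is bounded, at fixed time, by a constant multiple of $\varepsilon\bigl(\|\partial_t h\|_{H^1}+\|\partial_t^2 h\|_{H^1}+\varepsilon\|\partial_x h\|_{L^\infty}\bigr)$, which is integrable in time and therefore compatible with closing both the energy inequality and the virial identity.
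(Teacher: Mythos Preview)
Your two-step strategy (modified energy for global existence, then virial plus a second functional for decay) matches the paper, and your handling of the bottom-induced source terms via the $L^1_t$ hypotheses in \eqref{hyp_h} is correct in spirit. However, there is a genuine gap in the decay argument.

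The quadratic form you obtain after differentiating the virial is \emph{not} a pointwise form in $(u,\partial_x u,\eta,\partial_x\eta)$: it unavoidably contains nonlocal terms such as $\int\varphi' u\,(1-\partial_x^2)^{-1}u$ and $\int\varphi'\eta\,(1-\partial_x^2)^{-1}\eta$ (see \eqref{eq:leading}), which cannot be compared pointwise to $\int\varphi' u^2$ or $\int\varphi'\eta^2$. Your claim of pointwise positive definiteness therefore fails as stated. The paper resolves this by passing to \emph{canonical variables} $f=(1-\partial_x^2)^{-1}u$, $g=(1-\partial_x^2)^{-1}\eta$ (Definition~\ref{Can_Var}), in terms of which all contributions become local integrals against $\varphi'$ of $f,\partial_x f,\partial_x^2 f,\partial_x^3 f$ and their $g$-counterparts (Lemma~\ref{lem:leading}); only then can positivity be checked, and this is precisely where Definition~\ref{New Dis_Par} enters. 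Without this step the positivity claim has no proof. Relatedly, the actual virials used are the momentum-type cross functionals $\mathcal I=\int\varphi(u\eta+\partial_x u\,\partial_x\eta)$ and $\mathcal J=\int\varphi'\eta\,\partial_x u$; the $u^2$ and $\eta^2$ terms you include are not present in \cite{KMPP,KM20}.

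Two smaller corrections. First, the transport error from $\partial_t\varphi(x/\lambda(t))$ is controlled not by integrability of $\lambda'/\lambda$ (which is $\sim 1/t$ and diverges) but by splitting it into a piece absorbed by the positive quadratic form and a remainder of size $\lambda'^2/\lambda\sim 1/(t\log t\log^2\log t)$, which \emph{is} integrable; this is why $\lambda(t)$ has the specific form \eqref{eq:lambda}. Second, the passage from the integrated estimate \eqref{eq:virial2-1} to decay along all sequences is done not by a second virial but by a \emph{localized energy} $E_{\mathrm{loc}}(t)=\tfrac12\int\psi\,(-a(\partial_x u)^2-c(\partial_x\eta)^2+u^2+\eta^2+u^2(\eta+h))$ with $\psi=\sech^4(x/\lambda(t))$: one shows $|\tfrac{d}{dt}E_{\mathrm{loc}}|$ is bounded by the integrand of \eqref{eq:virial2-1} plus integrable bottom contributions, then integrates from $t$ to a good sequence $t_n\to\infty$ extracted from \eqref{eq:virial2-1}.
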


Theorem \ref{MT1} is, as far as we know, the first decay-in-time result for the one dimensional $abcd$ Boussinesq systems in the context of variable bottom. Previously, in the flat bottom case $h=0$, Mu\~noz and Rivas \cite{Munoz_Rivas} proved decay at a rate of order $O(t^{-1/3})$ for data in weighted Sobolev spaces. This was done for the case of a generalized $abcd$ system with $a=c$, $b=d$ and nonlinearities $\partial_x(u^p \eta)$ and $\partial_x(u^{p+1}/(p+1))$, $p\geq 5$. However, equations \eqref{boussinesq} (characterized by the exponent $p=1$) involve important challenging issues: \emph{long range} (quadratic) nonlinearities in 1D, very weak linear dispersion $O(t^{-1/3})$, the existence of (non scattering) solitary waves, standard enemies to decay, and finally the variable bottom that makes the problem non Hamiltonian, with possible blow up solutions.

Theorem \ref{MT1} also provides a corollary discarding the existence of solitary waves in the region $I(t)$.

\begin{corollary}
Under the assumptions in Theorem \ref{MT1}, there are no small solitary waves nor breathers present in the region $I(t)$ as $t\to +\infty.$
\end{corollary}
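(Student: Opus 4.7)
The plan is to argue by contradiction, converting the local decay provided by Theorem \ref{MT1} into a trivialization of any putative nontrivial solitary wave or breather whose bulk remains inside $I(t)$. Take a candidate $(u,\eta)\in C(\R,H^1\times H^1)$ fulfilling the smallness hypothesis \eqref{Smallness}. By Theorem \ref{MT1}, the solution is global and satisfies the strong decay \eqref{Conclusion_0}.

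First I would treat the solitary-wave case. Interpret a small solitary wave as $(u(t,x),\eta(t,x))=(U(x-\xi(t)),N(x-\xi(t)))$, with nontrivial profile $(U,N)\in H^1\times H^1$ and trajectory $\xi$. Saying the wave is present in $I(t)$ as $t\to+\infty$ should be read as $\xi(t)$ sitting well inside $I(t)$, for instance $|\xi(t)|\le \tfrac12\, |t|/(\log|t|\log^2\log|t|)$ for all large $t$. Under this centering, the shifted window $I(t)-\xi(t)$ contains the symmetric interval $(-|t|/(2\log|t|\log^2\log|t|),|t|/(2\log|t|\log^2\log|t|))$, whose length diverges. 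Translation invariance of the $H^1$ norm and monotone convergence then give
\[
\|(u,\eta)(t)\|_{H^1\times H^1(I(t))}=\|(U,N)\|_{H^1\times H^1(I(t)-\xi(t))}\ \sublim_{t\to\infty}\ \|(U,N)\|_{H^1\times H^1}.
\]
Since the last norm is strictly positive by nontriviality, this contradicts \eqref{Conclusion_0} and forces $(U,N)\equiv 0$.

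For the breather case I would use the defining non-scattering, localized and recurrent structure: there exist $c_0>0$, $R>0$, and a sequence $t_n\to+\infty$ (synchronized with the internal quasi-period) along which $\|(u,\eta)(t_n)\|_{H^1\times H^1((-R,R))}\ge c_0$. Because $|I(t)|\to\infty$, one has $I(t_n)\supset(-R,R)$ eventually, so $\|(u,\eta)(t_n)\|_{H^1\times H^1(I(t_n))}\ge c_0$, again contradicting \eqref{Conclusion_0}.

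The main delicate point is not analytic but definitional: in the moving-bottom regime, strictly traveling solitary waves and classical breathers may fail to exist as such, so the corollary should be read in the "localized profile not vanishing on expanding windows" sense above. Once this interpretation is fixed, the statement is an immediate corollary of \eqref{Conclusion_0}; the only care needed is that the profile (or its recurrence centers) sit sufficiently interior to $I(t)$ so that the growth $|I(t)|\to+\infty$ can be exploited.
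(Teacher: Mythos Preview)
Your argument is correct and matches the paper's treatment: the paper does not supply a separate proof of this corollary, presenting it as an immediate consequence of the decay statement \eqref{Conclusion_0} in Theorem~\ref{MT1}. Your contradiction argument (persistence of a nonzero localized $H^1\times H^1$ mass for a solitary wave or breather versus the vanishing of $\|(u,\eta)(t)\|_{H^1\times H^1(I(t))}$) is exactly the standard reasoning that justifies such corollaries, and your caveat about interpreting ``solitary wave'' and ``breather'' in the non-autonomous variable-bottom setting is appropriate.
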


It is well-known that flat bottom $abcd$ has solitary waves, some of them stable. See the works \cite{BCL0,CNS1,CNS2,HSS,Olivera} and references therein for further details. In the variable bottom case, the interaction of solitary waves and the mild bottom has been recently considered in \cite{DGMMP}.

Theorem \ref{MT1} provides a mild integrated rate of decay for solutions in the energy space: 
\be\label{eq:virial1_intro}
\int_{11}^\infty \!\! \int e^{-c_0 |x|} \left(u^2 + \eta^2 + (\partial_x u)^2+ (\partial_x \eta)^2\right)(t,x)dx\, dt  \lesssim_{h} 1.
\ee
This reveals that local-in-space $H^1$ norms do integrate in time. Formally, a better rate of decay is present, in the form of a local smoothing effect. This decay estimate can be compared to the expected linear decay rate, which is generally believed to be $O_{L^\infty}(t^{-1/3})$ \cite{Munoz_Rivas} (recall that not all $a,b,c$ lead to linear systems with the same rate of decay). In that sense, \eqref{eq:virial1_intro} reveals that a hidden improved decay mechanism is present in \eqref{boussinesq}. As far as we understand, no better estimate can be obtained from our approach, unless some additional extra assumptions on the decay of the initial data are imposed. It is also worth highlighting that the linear decay rate naturally leads (at least at a formal level) to the phenomenon of modified scattering—or what is sometimes termed “supercritical scattering” due to the long-range nonlinear effects. See \cite{KMM1,KMM3} for a complete discussion on this topic.

\subsection{Related literature} As for the low regularity Cauchy problem associated to \eqref{boussinesq_0} and its generalizations to higher dimensions, Saut et. al. \cite{SX,SWX} studied in great detail the long time existence problem by focusing in the small physical parameter $\ve$ appearing from the asymptotic expansions. They showed well-posedness (on a time interval of order $1/\ve$) for \eqref{boussinesq_0}. Previous results by Schonbek \cite{Schonbek} and Amick \cite{Amick} considered the case $a=c=d=0$, $b=\frac13$, a version of the original Boussinesq system, proving global well-posedness under a non cavitation condition, and via parabolic regularization. The so-called KdV-KdV and BBM-BBM are naturally good candidates for a deeper study. Indeed, Linares, Pilod and Saut \cite{LPS} considered existence in higher dimensions for time of order $O(\ve^{-1/2})$, in the KdV-KdV regime $(b=d=0)$. Additional low-regularity well-posedness results can be found in the recent work by Burtea \cite{Burtea}. On the other hand, ill-posedness results and blow-up criteria (for the case $b=1$, $a=c=0$), are proved in \cite{CL}, following the ideas in Bona-Tzvetkov \cite{BT}. Furthermore, Kwak and Maul\'en \cite{KM}, following the ideas of Bejenaru and Tao \cite{BTao}, proved ill-posedness in the generic ($a,c<0$, $b,d>0$), KdV-KdV and BBM-BBM regime ($a=c=0$).

The study of Boussinesq systems over variable bottom topography has been under strong consideration during past years. Peregrine~\cite{Peregrine} derived the vector system
\begin{equation}\label{Pere}
\begin{aligned}
& \partial_t \eta + \nabla \cdot \left[(h + \varepsilon \eta) \overline{\mathbf{u}}\right] = 0, \\
& \partial_t \overline{\mathbf{u}} + \nabla \eta + \varepsilon(\overline{\mathbf{u}} \cdot \nabla)\overline{\mathbf{u}} - \sigma^2 \frac{h}{2} \nabla\left(\nabla \cdot \left(h \partial_t \overline{\mathbf{u}}\right)\right) + \sigma^2 \frac{h^2}{6} \nabla\left(\nabla \cdot \partial_t \overline{\mathbf{u}}\right) = O\left(\varepsilon \sigma^2, \sigma^4\right).
\end{aligned}
\end{equation}
Here, $\overline{\mathbf{u}}$ denotes the depth-averaged horizontal velocity, defined as
\begin{equation*}
\overline{\mathbf{u}} = \frac{1}{h + \varepsilon \eta} \int_{-h}^{\varepsilon \eta} \mathbf{u} \, dz.
\end{equation*}
Several variants of Boussinesq-type systems incorporating bottom variability have been developed in the literature~\cite{Chazel, Dutykh, Madsen, Madsen2, Nwogu, Schaffer}. Precisely, we have decided to understand the long time behavior of small solutions in the variable bottom case described by M. Chen \cite{MChen}, but several other models, specially the Peregrine one are also expected to be treated in the near future. Compared with \eqref{boussinesq}, one sees that \eqref{Pere} is vector valued but a similar structure is present. In some sense, we believe that techniques used in this paper may be useful to treat the more involved case \eqref{Pere}. Another interesting variable bottom model was derived by Mitsotakis \cite{Mitso1}. Compared with the model worked in this paper, Mitsotakis' model generalizes the Peregrine model and considers a more complex dynamics. In that sense, it is a nice candidate to consider the long time behavior problem once the Peregrine model is well-understood.

There are some recent advances on the understanding on the $abcd$-Boussinesq model with variable bottom. For example, in \cite{CGL}, the authors consider the equation \eqref{boussinesq_0}, with the respective pressure term $\partial_x P_0$, and a bottom topography defined by $h(t,x)=\chi(t) B(x)$ where $\chi$ is a Heaviside function. This setting is commonly used to describe sudden bottom displacement, such as those occurring during  underwater earthquakes, which may generate tsunamis. Usually this problem is referred to as the “Bump" problem. In particular, the authors studied the case $P_0=0$, $b=-1/3$, $a=c=\tilde{c}_1=0$ and $\tilde{a}_1=0$, and they discussed convergence results for a particular regularized version of the “Bump” problem, approximating an instantaneous bottom lift-up.

The extension to the 2D version of the model \eqref{boussinesq_0}  has been widely explored for its viability using computational methods. 
Kervella, Dutykh and Dias \cite{KDD}, in the context of tsunami generation by underwater earthquakes, derived the extended version of \eqref{boussinesq_0} to the 2D case, with the parameters $a=-1/6$, $b=1/2$, $d=c=0$, $\tilde{c}_1=0$ and $\tilde{a}_1=0$. Their model incorporates sea bottom motion caused by dip–slip faults and describes the  resulting motion of the water above (see (54)-(56) in \cite{KDD}).  
They discussed the relevance of the dispersive effects on this model and analyzed the interpretation made in \cite{Indian} about the satellite altimetry observations of the Indian Ocean tsunami (26 December 2004). Related to this phenomenon, Chen \cite{Chen_sim} developed numerical simulations using the 2D BBM-BBM \eqref{boussinesq_0}, and based on data of this destructive event, she remarks that in many real physical situations, the wave is generated by a source which is not necessarily axisymmetric.

 Dutykh and Dias  \cite{DD} employed the 2D nonlinear shallow water equation to model tsunami generation, leading to the model \eqref{boussinesq_0},  after a suitable redefinition of the bottom function $h$,  with all parameters set to zero (see equation (6) in \cite{DD}). This case can be interpreted as corresponding to a surface tension coefficient of $1/3$. Here, the bathymetry $h$ acts as coupling between the earth and fluid models. Using  finite-element method,  the authors performed numerical simulation to describe the dynamical effects of different kinds of seabed deformations.

In \cite{IKM}, the authors derived, among other models, \eqref{boussinesq_0}  (see (2.34)-(2.35) in \cite{IKM}). Through further approximations, they introduced a new systems capable of modeling a broad range of phenomena, including tsunamis, tidal waves and wave propagation in ports and lakes. In particular, they analyzed the capability of the new models to capture  the shoaling effects over gentle slopes.

 Another contribution \cite{MG}, develops  a BBM-BBM-type regime of \eqref{boussinesq_0}, with parameters $\tilde{a}_1=1/2(2-\sqrt{6})$ and $\tilde{c}_1=1-\sqrt{2/3}$, (see (4)-(5) in \cite{MG}). The authors developed numerical simulations, and they observed that the numerical experiments were in accordance with the previous theoretical and experimental studies. Notably, their results reveal the emergence of the so-called Bragg resonant reflection when surface waves interact with periodic bottom structures. The authors also presented experiments involving the interaction of incident waves with variable topographies such as the shoaling of a solitary wave on a slope and the generation of surface waves by moving topography.

\subsection{Method of proof}

The proof of Theorem \ref{MT1} is based on the introduction of virial identities from which local dispersion in the energy space is clearly identified. Given that \eqref{boussinesq} is a two-component system, any virial term must incorporate contributions from both variables, $u$ and $\eta$. Let us recall the situation in the flat bottom case, described in previous works \cite{KMPP,KM20}. First of all, small data solutions are globally well-defined thanks to the conservation of the Hamiltonian. It turns out that the main virial terms are given by
\[
\mathcal I(t) := \int \vp(t,x) (u\eta + \px u \px\eta)(t,x)dx, \quad \mathcal J(t) := \int \partial_x \vp (t,x)\eta \px u (t,x)dx,
\]
where $\vp$ is a smooth and bounded function. The introduction of the functional $\mathcal I$ is straightforward and directly motivated by considerations of the momentum
\[
P[u,\eta](t) := \int (u\eta + \partial_x u\partial_x\eta)(t,x)dx,
\]
which in the flat bottom case is conserved by the $H^1\times H^1$ flow \cite{BCS2}. The functional $\mathcal J$ can be viewed as a second-order correction to the main functional $\mathcal I(t)$, allowing us to cancel several unfavorable terms arising in the time evolution of $\mathcal I(t)$. Under Definition \ref{New Dis_Par}, in \cite{KMPP,KM19} it was proved that a suitable linear combination of $\mathcal I(t)$ and $\mathcal J(t)$ are enough to show dispersion for any small solution in the energy space: there are parameters $\al\in\R$ and $d_0>0$ such that
\be\label{positivo}
\begin{aligned}
\frac{d}{dt}  \big(\mathcal I(t)  + & \al\mathcal J(t)  \big)  \\
& \gtrsim_{a,b,c,d_0} \int e^{-d_0|x|} (u^2 + \eta^2 + (\partial_x u)^2 +(\partial_x \eta)^2)(t,x)dx.
\end{aligned}
\ee
The choice of parameters in the linear combination $\mathcal I(t) +\al\mathcal J(t)$ is not trivial and one needs to change to ``canonical''  variables (see \cite{ElDika2005-2,ElDika_Martel} for its use in the BBM equation). These new variables are introduced specifically to demonstrate positivity of the bilinear form associated with the virial-type evolution, in full analogy with equation \eqref{positivo}. Canonical variables are particularly useful when dealing with local and nonlocal terms of the form 
\[
\int e^{-d_0|x|} u^2 , \quad \int e^{-d_0|x|} u (1-\px^2)^{-1}u,
\]
and similar others, and which in principle have no clear connection or comparison. For a similar use of canonical variables, see \cite{MaMu0}.

In the variable bottom case, the previously mentioned strategy needs to be modified and adapted to the lack of conserved Hamiltonian and momentum laws. New terms involving the time-dependent variable bottom $h$ will appear in the variation of the Hamiltonian, as stated in \eqref{Energy_new_00}. Similarly, the momentum law will be strongly perturbed and, unless one considers hypotheses such as \eqref{hyp_h}, it is not clear that suitable virial will control de decay of small solutions. It is worth to mention that the variable bottom acts, at first order, as a new “potential'', or “solitary wave'' correction to the virial estimates, in the same spirit as solitary wave asymptotic stability estimates are considered. Therefore, the problem of decay in variable bottom is in some sense equivalent to prove asymptotic stability in a suitable regime depending on the assumptions made on the variable bottom. In the $abcd$ case, the asymptotic stability of its solitary waves is far from obvious, and as far as we understand, only a first result \cite{MaMu3}, concerning the KBK $abcd$ model ($b=d=c=0$, $a=-1$) has been considered. See also \cite{MaMu1,MaMu2} for a proof of asymptotic stability in the related fourth order $\phi^4$ and Good Boussinesq models.

The virial technique that we use in this paper is inspired in the recent results \cite{KMPP,KM19}, which are motivated by works by Martel and Merle \cite{MM,MM1}. An important improvement was the extension of virial identities of this type to the case of wave-like equations, see the works by Kowalczyk, Martel and the second author \cite{KMM1,KMM2}. Then, this approach was succesfuly extended to the case of the $abcd$ Boussinesq system in \cite{KMPP}. In this paper, we have followed some of the ideas in \cite{KMPP}, especially the introduction of the functional $\mathcal I(t)$. However, the main difference between \cite{KMPP} and this paper is that here we need to introduce additional modifications of the main virial terms in order to show positivity. We are also able to prove Theorem \ref{MT1} without any assumption on the parity of the data, leading to a general result in the energy space.
For additional results on decay of small and large solutions, see \cite{FLMP,MMPP1,MMPP2,MaMu0}. For the long time behavior of Boussinesq solitary waves using virial techniques, including asymptotic stability, see \cite{Mau,MaMu1}.

\subsection*{Organization of this paper}
This paper is organized as follows: Section \ref{2} contains some preliminary results needed along this paper.  Section \ref{3} deals with the Virial identities needed for the proof of Theorem \ref{MT1}, the positivity of the bilinear form appearing from the virial dynamics, to obtain integrability in time of the local $H^1\times H^1$ norm of the solution, the main consequence of the virial identity. Section \ref{ENERGY} contains energy estimates needed to improve virial estimates from previous sections. Finally, in Section \ref{7} we prove Theorem \ref{MT1}.

\subsection*{Acknowledgments} 
This work is part of the PANDA project (\url{https://team.inria.fr/panda/}). The authors thank Jean-Claude Saut, Andr\'e de Laire and Olivier Goubet for many interesting comments and suggestion to a first version of this paper. Ch. M. would like to thank DIM-CMM at U. Chile where part of this work was done. Cl. Mu. and F. P. would like to thank Inria Lille and Universit\'e de Lille, France, where part of this work was written. 

\section{Preliminaries}\label{2}

This section presents auxiliary results that will be required in the subsequent analysis.

\subsection{Local and global existence} Under \eqref{Conds0}-\eqref{Conds2}, in the flat bottom case one has that local and global well-posedness for the case of small data is direct \cite{BCS1}, thanks to the conservation of the energy \eqref{Energy}. In the case of variable bottom, the situation is not direct at all. However, given the hypotheses on $h$ \eqref{hyp_h}, which consider smallness and boundedness of space and time derivatives, and seen this as a forcing term which integrates in time, one can easily see that local well-posedness is satisfied. As for the global well-posedness, we assume the validity of \eqref{hyp_h} for all nonnegative times, leading to the following first estimates.

\begin{lemma}
Recall the modified energy in \eqref{Energy_new}. Assume \eqref{hyp_h}. Then one has
\begin{enumerate}
\item There exists $c_0>0$ such that if for some $t$ one has $\| (\eta,u)(t)\|_{H^1\times H^1} <C_0 \varepsilon$, then
\be\label{coercividad}
\begin{aligned}
& c_0 (1- C\varepsilon -C_0\varepsilon)\| (\eta,u)(t)\|_{H^1\times H^1}^2 \\
& \qquad \leq H_h[\eta,u ](t) \leq \frac1{c_0}(1+ C\varepsilon  +C_0\varepsilon) \| (\eta,u)(t)\|_{H^1\times H^1}^2.
\end{aligned}
\ee
\item Variation of energy:
\be\label{Energy_new_1}
\begin{aligned}
\frac{d}{dt} H_h[u ,\eta ](t) = &~{} -a c_1 \int  u  \partial_t^2 \partial_x h \\
&~{} +c_1 \int \left( 1+a    +   \eta + h  \right)u (1- \partial_x^2)^{-1}  \partial_t^2 \partial_x h \\
&~{}  + c\int \eta  \partial_th + c a_1 \int \partial_x \eta \partial_x \partial_th \\
&~{}   +(a_1- 1) \int \left(  (1+c) \eta  + \frac12 u^2 \right)(1- \partial_x^2)^{-1}  \partial_th  \\
&~{}  -a_1 \int \left(  (1+c) \eta  + \frac12 u^2 \right) \partial_th  + \frac12 \int u^2  \partial_t h.
\end{aligned}
\ee
\end{enumerate}
\end{lemma}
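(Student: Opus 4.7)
The lemma has two independent assertions; I treat them separately.

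\textbf{Part (1) — Coercivity.} The quadratic part of $H_h$ is $\tfrac12\int(-a(\partial_x u)^2-c(\partial_x\eta)^2+u^2+\eta^2)$. Since by \eqref{Conds} both $-a$ and $-c$ are strictly positive, this is comparable to $\|(\eta,u)\|_{H^1\times H^1}^2$, giving constants $c_0$ depending only on $a,c$. The cubic correction $\tfrac12\int u^2(\eta+h)$ is controlled by the one-dimensional Sobolev embedding $H^1\hookrightarrow L^\infty$:
\[
\Bigl|\tfrac12\int u^2\eta\Bigr|\lesssim \|u\|_{L^\infty}\|u\|_{L^2}\|\eta\|_{L^2}\lesssim \|(\eta,u)\|_{H^1\times H^1}^3\le C_0\varepsilon\,\|(\eta,u)\|_{H^1\times H^1}^2,
\]
and, using \eqref{hyp_h},
\[
\Bigl|\tfrac12\int u^2h\Bigr|\lesssim \|u\|_{L^2}^2\|h\|_{L^\infty}\lesssim \|h\|_{H^1}\|u\|_{H^1}^2\le C\varepsilon\,\|u\|_{H^1}^2.
\]
Combining these two-sided bounds with the quadratic coercivity yields \eqref{coercividad}.

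\textbf{Part (2) — Energy identity.} Differentiate \eqref{Energy_new} term by term and integrate by parts to move derivatives onto the time-independent factors, obtaining
\[
\frac{d}{dt}H_h=\int f\,\partial_t u+\int g\,\partial_t\eta+\tfrac12\int u^2\partial_t h,
\]
where I write $f:=a\partial_x^2u+u+u(\eta+h)$ and $g:=c\partial_x^2\eta+\eta+\tfrac12 u^2$. Now invert $1-\partial_x^2$ in \eqref{boussinesq} to get
\[
\partial_t u=-L\partial_x g+c_1L\partial_t^2\partial_x h,\qquad \partial_t\eta=-L\partial_x f+L(-1+a_1\partial_x^2)\partial_t h,
\]
with $L:=(1-\partial_x^2)^{-1}$. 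Inserting these, the ``Hamiltonian'' piece reads
\[
-\int f\,L\partial_x g-\int g\,L\partial_x f,
\]
which vanishes because $L$ is self-adjoint and commutes with $\partial_x$, while $\partial_x$ is skew. All remaining contributions come from the forcing by $h$.

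\textbf{Bottom-forcing bookkeeping.} To match the form stated in \eqref{Energy_new_1}, I split the high-order operators into ``local plus smoothing'' pieces. For the $\partial_t u$-equation, the identity $a\partial_x^2=-a(1-\partial_x^2)+a$ gives $L(a\partial_x^2u)=-au+aLu$, which turns
\[
c_1\int f\,L\partial_t^2\partial_x h \;=\; -ac_1\!\int u\,\partial_t^2\partial_x h+c_1\!\int(1+a+\eta+h)u\,L\partial_t^2\partial_x h.
\]
For the $\partial_t\eta$-equation, the analogous decompositions $-1+a_1\partial_x^2=-a_1(1-\partial_x^2)+(a_1-1)$ and $c\partial_x^2=-c(1-\partial_x^2)+c$ applied inside $\int g\,L(-1+a_1\partial_x^2)\partial_t h$, combined with integration by parts on the remaining $c\int\partial_x^2\eta\,\partial_t h=-c\int\partial_x\eta\,\partial_x\partial_t h$ terms, produce precisely the four contributions involving $c\int\eta\,\partial_t h$, $ca_1\!\int\partial_x\eta\,\partial_x\partial_t h$, $(a_1-1)\int[(1+c)\eta+\tfrac12 u^2]L\partial_t h$, and $-a_1\int[(1+c)\eta+\tfrac12 u^2]\partial_t h$. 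Adding the leftover $\tfrac12\int u^2\partial_t h$ from the direct differentiation completes \eqref{Energy_new_1}.

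\textbf{Main obstacle.} There is no analytic difficulty: hypotheses \eqref{hyp_h} guarantee that every integral above is absolutely convergent, and the manipulations above are justified on the regularized solutions supplied by the local well-posedness theory and then passed to the limit. The only delicate point is purely algebraic — choosing the correct splittings $a\partial_x^2=-a(1-\partial_x^2)+a$ etc.\ so that each nonlocal piece $L\partial_t h$, $L\partial_t^2\partial_x h$ appears with the precise coefficients claimed in \eqref{Energy_new_1}; this bookkeeping must be done carefully to ensure the $\int\eta\,\partial_t h$ and $\int u^2\partial_t h$ contributions combine with the correct signs.
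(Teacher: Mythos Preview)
Your proof is correct and follows essentially the same route as the paper: for Part~(1) you invoke the coercivity of the quadratic part (using $-a,-c>0$) and control the cubic remainder via Sobolev embedding and the smallness assumptions, exactly as the paper does (more tersely); for Part~(2) you differentiate, recognize the Hamiltonian cancellation $\int f\,L\partial_x g+\int g\,L\partial_x f=0$ via skew--self-adjointness, and then use the operator splittings $a\partial_x^2=-a(1-\partial_x^2)+a$, $c\partial_x^2=-c(1-\partial_x^2)+c$, and $-1+a_1\partial_x^2=-a_1(1-\partial_x^2)+(a_1-1)$ to produce the stated coefficients, which is precisely the paper's ``further simplification''. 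One minor phrasing remark: when you write ``$L(a\partial_x^2u)=-au+aLu$, which turns $c_1\int f\,L\partial_t^2\partial_x h=\ldots$'', the cleanest reading is that you move $a\partial_x^2$ across by integration by parts (or equivalently move $L$ by self-adjointness) rather than literally computing $Lf$, since otherwise the $(\eta+h)u$ piece would appear as $L((\eta+h)u)$; either way the self-adjointness of $L$ gives the same final expression, so this is not a gap.
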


\begin{proof}
First of all, \eqref{coercividad} is direct from the definition of $H_h$ in \eqref{Energy_new} and the smallness conditions for $h$ \eqref{hyp_h} and $\| (\eta,u)(t)\|_{H^1\times H^1} <C_0 \varepsilon$ (implying $L^\infty$ smallness as well). The proof of \eqref{Energy_new_1} is direct: one has from \eqref{Energy_new},
\[
\begin{aligned}
\frac{d}{dt} H_h[\eta,u ](t) = &~{} \int \left( -a \partial_{x} u \partial_{x} \partial_{t} u  -c  \partial_x \eta \partial_{x} \partial_{t} \eta  + u \partial_t u + \eta \partial_t \eta  +  \frac12 \partial_t(u^2(\eta+ h)) \right)\\
= &~{} \int \left( a   \partial_{x}^2 u  + u   +  u  (\eta+ h)  \right) \partial_t u \\
&~{} + \int \left( c \partial_{x}^2  \eta  +  \eta  + \frac12 u^2 \right)\partial_{t} \eta   + \frac12 \int u^2  \partial_t h.
\end{aligned}
\]
Using \eqref{boussinesq},
\[
\begin{aligned}
\frac{d}{dt} H_h[\eta,u ](t) = &~{} -\int \left( a   \partial_{x}^2 u  + u   +  u  (\eta+ h)  \right) (1- \partial_x^2)^{-1} \partial_x \! \left( c\, \partial_x^2 \eta + \eta  + \frac12 u^2 \right) \\
&~{} + c_1 \int \left( a   \partial_{x}^2 u  + u   +  u  (\eta +h)  \right) (1- \partial_x^2)^{-1}  \partial_t^2 \partial_x h \\
&~{} - \int \left( c \partial_{x}^2  \eta  +  \eta  + \frac12 u^2 \right) (1- \partial_x^2)^{-1} \partial_x \! \left( a\, \partial_x^2 u +u + u (\eta +h) \right) \\
&~{} + \int \left( c \partial_{x}^2  \eta  +  \eta  + \frac12 u^2 \right)(1- \partial_x^2)^{-1} (-1 +a_1 \partial_x^2) \partial_th   + \frac12 \int u^2  \partial_t h.
\end{aligned}
\]
Simplifying, we obtain
\[
\begin{aligned}
\frac{d}{dt} H_h[\eta,u ](t) = &~{} c_1 \int \left( a   \partial_{x}^2 u  + u   +  u  (\eta + h)  \right) (1- \partial_x^2)^{-1}  \partial_t^2 \partial_x h \\
&~{} + \int \left( c \partial_{x}^2  \eta  +  \eta  + \frac12 u^2 \right)(1- \partial_x^2)^{-1} (-1 +a_1 \partial_x^2) \partial_th  \\
&~{}  + \frac12 \int u^2  \partial_t h.
\end{aligned}
\] 
A further simplification gives
\[
\begin{aligned}
\frac{d}{dt} H_h[\eta,u ](t) = &~{} -a c_1 \int  u  \partial_t^2 \partial_x h \\
&~{} +c_1 \int \left( 1+a    +   \eta + h  \right)u (1- \partial_x^2)^{-1}  \partial_t^2 \partial_x h \\
&~{}  -c\int \eta (-1 +a_1 \partial_x^2) \partial_th \\
&~{}   +(a_1- 1) \int \left(  (1+c) \eta  + \frac12 u^2 \right)(1- \partial_x^2)^{-1}  \partial_th  \\
&~{}  -a_1 \int \left(  (1+c) \eta  + \frac12 u^2 \right) \partial_th  + \frac12 \int u^2  \partial_t h.
\end{aligned}
\] 
Now we integrate by parts to get
\[
\begin{aligned}
\frac{d}{dt} H_h[\eta,u ](t) = &~{} -a c_1 \int  u  \partial_t^2 \partial_x h \\
&~{} +c_1 \int \left( 1+a    +   \eta + h  \right)u (1- \partial_x^2)^{-1}  \partial_t^2 \partial_x h \\
&~{}  + c\int \eta  \partial_th + c a_1 \int \partial_x \eta \partial_x \partial_th \\
&~{}   +(a_1- 1) \int \left(  (1+c) \eta  + \frac12 u^2 \right)(1- \partial_x^2)^{-1}  \partial_th  \\
&~{}  -a_1 \int \left(  (1+c) \eta  + \frac12 u^2 \right) \partial_th  + \frac12 \int u^2  \partial_t h.
\end{aligned}
\]
This proves \eqref{Energy_new_1}, as desired.
\end{proof}

\subsection{Proof of global existence} In this subsection, we prove the first part of Theorem \ref{MT1}, namely \eqref{global}.

\begin{lemma}\label{Lem2p3}
One has
\be\label{est_E}
\begin{aligned}
\left| \frac{d}{dt} H_h[\eta,u ](t) \right| \lesssim  
&~{}  (\| (\eta,u)(t)\|_{H^1\times H^1} + \| (\eta,u)(t)\|_{H^1\times H^1}^2 ) \| \partial_t h (t) \|_{H^1} \\
&~{} + ( \| (\eta,u)(t)\|_{H^1\times H^1}  + \|(\eta,u)(t)\|_{H^1\times H^1}^2 ) \| \partial_t^2 h (t) \|_{L^2},
\end{aligned}
\ee
and
\be\label{est_E_2}
\begin{aligned}
& (1 -C\varepsilon -C_0\varepsilon) \|(\eta,u)(t)\|_{H^1\times H^1}^2 \\
&~{} \le C \|(\eta,u)(0)\|_{H^1\times H^1}^2 \\
&~{} \quad + C\int_0^t  (\| (\eta,u)(s)\|_{H^1\times H^1} + \| (\eta,u)(s)\|_{H^1\times H^1}^2 ) ( \| \partial_t h (s) \|_{H^1} +\| \partial_t^2 h (s) \|_{L^2} )ds.
\end{aligned}
\ee
\end{lemma}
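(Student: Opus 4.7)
The plan is to derive the pointwise-in-time estimate \eqref{est_E} by bounding each of the seven terms on the right-hand side of the identity \eqref{Energy_new_1} using Cauchy--Schwarz, the one-dimensional Sobolev embedding $H^1 \hookrightarrow L^\infty$, and mapping properties of the resolvent $(1-\partial_x^2)^{-1}$. Then \eqref{est_E_2} follows by combining \eqref{est_E} with the coercivity \eqref{coercividad}, and integrating in time.

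First, for \eqref{est_E}, I would go term by term through \eqref{Energy_new_1}. The linear-in-$(\eta,u)$ terms are bounded directly: for instance $|-ac_1\int u\,\partial_t^2\partial_x h| = |ac_1\int \partial_x u\,\partial_t^2 h|\lesssim \|u\|_{H^1}\|\partial_t^2 h\|_{L^2}$, and $|c\int\eta\,\partial_t h+ca_1\int\partial_x\eta\,\partial_x\partial_t h|\lesssim \|\eta\|_{H^1}\|\partial_t h\|_{H^1}$. For the terms involving $(1-\partial_x^2)^{-1}$, I use that $(1-\partial_x^2)^{-1}\partial_x\colon L^2\to L^2$ and $(1-\partial_x^2)^{-1}\colon L^2\to L^2$ are bounded, so each factor like $(1-\partial_x^2)^{-1}\partial_t^2\partial_x h$ is controlled by $\|\partial_t^2 h\|_{L^2}$. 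The nonlinear contributions of the form $\int \eta u\,(1-\partial_x^2)^{-1}\partial_t^2\partial_x h$, $\int hu\,(1-\partial_x^2)^{-1}\partial_t^2\partial_x h$, $\int u^2(1-\partial_x^2)^{-1}\partial_t h$ etc.\ are handled by Sobolev: $\|\eta u\|_{L^2}\lesssim \|\eta\|_{L^\infty}\|u\|_{L^2}\lesssim \|\eta\|_{H^1}\|u\|_{L^2}$ and $\|u^2\|_{L^2}\lesssim \|u\|_{L^\infty}\|u\|_{L^2}\lesssim \|u\|_{H^1}^2$. The $h$-dependent factor $\|h\|_{L^\infty}\lesssim \|h\|_{W^{2,\infty}_tH^1_x}\lesssim \varepsilon$ from \eqref{hyp_h} is absorbed into the implicit constants. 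Collecting, the linear-in-$(\eta,u)$ contributions give the first factor $\|(\eta,u)\|_{H^1\times H^1}$ and the quadratic ones give $\|(\eta,u)\|^2_{H^1\times H^1}$, each multiplied either by $\|\partial_t h\|_{H^1}$ or by $\|\partial_t^2 h\|_{L^2}$, yielding \eqref{est_E}.

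For \eqref{est_E_2}, integrate \eqref{Energy_new_1} from $0$ to $t$ to obtain
\[
H_h[\eta,u](t) = H_h[\eta,u](0) + \int_0^t \frac{d}{ds}H_h[\eta,u](s)\,ds,
\]
and apply \eqref{coercividad} on the left to lower-bound $H_h[\eta,u](t)$ by $c_0(1-C\varepsilon-C_0\varepsilon)\|(\eta,u)(t)\|^2_{H^1\times H^1}$, and to upper-bound $H_h[\eta,u](0)$ by $c_0^{-1}(1+C\varepsilon+C_0\varepsilon)\|(\eta,u)(0)\|^2_{H^1\times H^1}$. The time integral is controlled by \eqref{est_E} already proved. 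Adjusting the multiplicative constant and using that $1+C\varepsilon+C_0\varepsilon\lesssim 1$ delivers exactly \eqref{est_E_2}.

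There is no substantial obstacle here; this is an estimate by direct inspection. The only minor care needed is to ensure that all the resolvent and derivative compositions appearing in \eqref{Energy_new_1} are absorbed into bounded operators on $L^2$ (in particular, $(1-\partial_x^2)^{-1}\partial_x$ and $(1-\partial_x^2)^{-1}$, never $(1-\partial_x^2)^{-1}\partial_x^2$ acting on a factor with less than two derivatives of $h$), so that the right-hand side closes exactly at the level of $\|\partial_t h\|_{H^1}$ and $\|\partial_t^2 h\|_{L^2}$ as stated, rather than at a higher regularity level. This is consistent with the structure of \eqref{Energy_new_1}, since every $\partial_t^2 h$ appears differentiated in $x$ inside a resolvent.
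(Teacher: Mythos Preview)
Your proposal is correct and follows essentially the same approach as the paper: a term-by-term estimate of \eqref{Energy_new_1} using Cauchy--Schwarz, the Sobolev embedding $H^1\hookrightarrow L^\infty$, the $L^2$-boundedness of $(1-\partial_x^2)^{-1}$ and $(1-\partial_x^2)^{-1}\partial_x$, and absorption of the factor $\|h\|_{L^\infty}\lesssim\varepsilon$ into the implicit constants, followed by integration in time combined with the coercivity \eqref{coercividad} to obtain \eqref{est_E_2}.
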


\begin{proof}
From \eqref{Energy_new_1} and \eqref{hyp_h},
\[
\begin{aligned}
& \left| -a c_1 \int  u  \partial_t^2 \partial_x h  + c\int \eta  \partial_th + c a_1 \int \partial_x \eta \partial_x \partial_th \right| 
\\
&\quad  \lesssim \| \partial_x u(t)\|_{L^2} \| \partial_t^2 h (t) \|_{L^2} +\| \eta(t)\|_{L^2} \| \partial_t h (t) \|_{L^2} +\| \partial_x \eta (t)\|_{L^2} \| \partial_x \partial_t h (t) \|_{L^2} \\
&\quad  \lesssim \| (\eta,u)(t)\|_{H^1\times H^1} \left( \| \partial_t h (t) \|_{H^1} + \| \partial_t^2 h (t) \|_{L^2} \right).
\end{aligned}
\]
Similarly,
\[
\begin{aligned}
& \left| -a_1 \int \left(  (1+c) \eta  + \frac12 u^2 \right) \partial_th  + \frac12 \int u^2  \partial_t h\right| \\
& \quad \lesssim \| (\eta,u)(t)\|_{H^1\times H^1}  \| \partial_t h (t) \|_{L^2} +\| (\eta,u)(t)\|_{H^1\times H^1}^2  \| \partial_t h (t) \|_{L^2}.
\end{aligned}
\]
Now, using that $\| (1-\partial_x^2)^{-1} f \|_{L^2} \leq \|f\|_{L^2}$,
\[
\begin{aligned}
&\left| c_1 \int \left( 1+a    +   \eta + h  \right)u (1- \partial_x^2)^{-1}  \partial_t^2 \partial_x h \right| \\
&\quad \lesssim (1+ \|h(t)\|_{L^\infty} + \|\eta(t)\|_{L^\infty}) \| \partial_x u(t)\|_{L^2} \|(1- \partial_x^2)^{-1} \partial_t^2 h (t) \|_{L^2} \\
&\quad \lesssim (1+ \|h(t)\|_{H^1} +\|(\eta,u)(t)\|_{H^1\times H^1}  ) \|(\eta,u)(t)\|_{H^1\times H^1} \| \partial_t^2 h (t) \|_{L^2} .
\end{aligned}
\]
Finally,
\[
\begin{aligned}
&\left|  (a_1- 1) \int \left(  (1+c) \eta  + \frac12 u^2 \right)(1- \partial_x^2)^{-1}  \partial_th \right| \\
& \quad \lesssim \| (\eta,u)(t)\|_{H^1\times H^1}  \| \partial_t h (t) \|_{L^2} +\| (\eta,u)(t)\|_{H^1\times H^1}^2  \| \partial_t h (t) \|_{L^2}.  
\end{aligned}
\]
Gathering the previous estimates and using \eqref{hyp_h}, we get
\[
\begin{aligned}
\left| \frac{d}{dt} H_h[\eta,u ](t) \right| \lesssim  
&~{}  (\| (\eta,u)(t)\|_{H^1\times H^1} + \| (\eta,u)(t)\|_{H^1\times H^1}^2 ) \| \partial_t h (t) \|_{H^1} \\
&~{} + ( \| (\eta,u)(t)\|_{H^1\times H^1}  + \|(\eta,u)(t)\|_{H^1\times H^1}^2 ) \| \partial_t^2 h (t) \|_{L^2} \\
&~{}  + \|h(t)\|_{H^1} \|(\eta,u)(t)\|_{H^1\times H^1} \| \partial_t^2 h (t) \|_{L^2} .
\end{aligned}
\]
Using \eqref{hyp_h}, this proves \eqref{est_E}. Then \eqref{est_E_2} follows directly from \eqref{est_E}, \eqref{coercividad} and integration in time.  and ends the proof of Lemma \ref{Lem2p3}.
\end{proof}
Now we conclude the global well-posedness result in the case of small data, in the positive time case. The negative time case is completely analogous. Let $\varepsilon_0>0$ be a small parameter and assume $0<\varepsilon<\varepsilon_0$. Take initial data $\| (\eta,u)(t=0)\|_{H^1\times H^1} < \varepsilon$, assume \eqref{hyp_h} for such $\varepsilon$ and define, for $C_0>1$,
\begin{equation}\label{TC}
T_*(C_0):= \sup\left\{ T \in (0, +\infty) ~ : ~ \hbox{ for all $t\in [0,T]$, } ~  \| (\eta,u)(t)\|_{H^1\times H^1} <C_0 \varepsilon \right\}.
\end{equation}
Let us prove that if $C_0$ is large enough but fixed, and $\varepsilon_0$ is sufficiently small, then $T_*=+\infty$ in \eqref{TC}. Indeed, let us assume $T_* <+\infty$. Then, assuming that the solution is not trivial, we have from \eqref{est_E_2}
\[
\begin{aligned}
& (1-C\varepsilon -C_0\varepsilon) \|(\eta,u)(t)\|_{H^1\times H^1}^2 \\
&~{} \le C \varepsilon^2  + C( C_0 \varepsilon +C_0^2 \varepsilon^2)  \int_0^{T_*}  ( \| \partial_t h (s) \|_{H^1} +\| \partial_t^2 h (s) \|_{L^2} )ds.
\end{aligned}
\]
and then, using \eqref{hyp_h}, $C\varepsilon <\frac14$ and $C_0\varepsilon<\frac14$,
\[
\begin{aligned}
& \sup_{t\in [0,T_*]} \|(\eta,u)(t)\|_{H^1\times H^1}^2  \le C \varepsilon^2 + C(C_0 \varepsilon + C_0^2 \varepsilon^2) \varepsilon \leq \frac14 C_0^2 \varepsilon^2.
\end{aligned}
\]
This contradicts the maximality of $T_*$ and proves the global well-posedness under small data. This proves \eqref{global}.

\subsection{Review on canonical variables} We mention now previous results by El Dika and Martel \cite{ElDika2005-2,ElDika_Martel}, and further estimates proved in \cite{KMPP} on the BBM \cite{BBM} character of $b=d>0$ abcd waves. These are based on the fact that virial estimates are not well-suited for the variables $(u,\eta)$, but instead better suited for natural canonical variables that appear in models strongly involving the nonlocal operator $(1-\px^2)^{-1}$.

\begin{definition}[Canonical variable]\label{Can_Var}
Let $u=u(x) \in L^2$ be a fixed function. We say that $f$ is canonical variable for $u$ if $f$ uniquely solves the elliptic equation
\be\label{Canonical}
f- \partial_x^2 f  = u, \quad f\in H^2(\R).
\ee
In this case, we simply denote $f=  (1-\partial_x^2)^{-1} u.$
\end{definition}

Canonical variables are standard in equations where the operator $(1-\partial_x^2)^{-1}$ appears; one of the well-known example is given by the Benjamin-Bona-Mahoney BBM equation, see e.g. \cite{ElDika2005-2,ElDika_Martel}.

\begin{lemma}
One has
\begin{equation}\label{eq:L2}
\int \vp' u^2 = \int\vp'\left(f^2 + 2(\partial_x f )^2 + (\partial_x^2 f )^2\right) - \int \vp'''f^2,
\end{equation}
and
\begin{equation}\label{eq:nonlocal}
\int \vp' u \nlop u = \int\vp'\left(f^2 + (\partial_x f )^2\right) - \frac12\int \vp'''f^2.
\end{equation}
\end{lemma}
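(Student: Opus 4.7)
\bigskip

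\noindent\textbf{Proof proposal.} The plan is a direct calculation: substitute $u = f - \partial_x^2 f$ (which is valid by Definition \ref{Can_Var}) into the left-hand sides, expand, and integrate by parts to redistribute derivatives. Throughout, $\vp$ is assumed smooth with sufficient decay (or $f \in H^2$ with decay enough to justify vanishing boundary terms), so no boundary contributions appear.

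For \eqref{eq:L2}, I would first expand
\[
u^2 = (f-\partial_x^2 f)^2 = f^2 - 2 f\, \partial_x^2 f + (\partial_x^2 f)^2,
\]
and then handle the cross term $-2\int \vp' f\,\partial_x^2 f$ by integration by parts:
\[
-2\int \vp' f\, \partial_x^2 f = 2\int (\vp' f)_x\, \partial_x f = 2\int \vp'' f\, \partial_x f + 2\int \vp'(\partial_x f)^2.
\]
Recognizing $2 f\, \partial_x f = (f^2)_x$ in the first term and integrating by parts once more yields $-\int \vp''' f^2$. Combining this with the remaining two terms $\int \vp' f^2$ and $\int \vp' (\partial_x^2 f)^2$ produces exactly \eqref{eq:L2}.

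For \eqref{eq:nonlocal}, note that $\nlop u = f$, so
\[
u\, \nlop u = (f-\partial_x^2 f)\, f = f^2 - f\, \partial_x^2 f.
\]
The only nontrivial term is $-\int \vp' f\, \partial_x^2 f$, which by the same integration-by-parts procedure equals $\int \vp''f\, \partial_x f + \int \vp'(\partial_x f)^2$, and using $2 f\, \partial_x f = (f^2)_x$ gives $-\tfrac{1}{2}\int \vp''' f^2 + \int \vp'(\partial_x f)^2$. Adding $\int \vp' f^2$ yields \eqref{eq:nonlocal}.

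There is no real obstacle here; the only small care point is justifying that $f \in H^2$ has enough decay for all integrations by parts to close without boundary terms. This follows from $u \in L^2$ and the mapping properties of $(1-\partial_x^2)^{-1}$, and from the fact that in our applications $\vp$ and its derivatives are bounded (and typically $\vp'$ is a positive localizing weight), so each integral is finite by Cauchy--Schwarz. The two formulas are therefore purely algebraic rewrites that convert a quadratic form in $u$ with a multiplier $\vp'$ into a quadratic form in the canonical variable $f$ and its derivatives, which is the form best suited to the virial analysis carried out in the sequel.
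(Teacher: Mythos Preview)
Your proof is correct: substituting $u=f-\partial_x^2 f$ and integrating by parts twice on the cross term is precisely the intended computation, and all steps check out. The paper does not actually supply a proof of this lemma (it is stated as an elementary identity), so your argument is exactly the standard derivation one would fill in.
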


\begin{lemma}[Equivalence of local $L^2$ and $H^1$ norms, \cite{KMPP}]\label{lem:L2 comparable}
Let $\phi$ be a smooth, bounded positive weight satisfying $|\phi''| \le \lambda \phi$ for some small but fixed $0 < \lambda \ll1$.  Let $f$ be a canonical variable for $u$, as introduced in Definition \ref{Can_Var} and \eqref{Canonical}.  The following are satisfied:

\begin{itemize}
\item If $u\in L^2$, then for any $a_1,a_2,a_3 > 0$, there  exist $c_1, C_1 >0$, depending on $a_j$ ($j=1,2,3$) and $\lambda >0$, such that
\begin{equation}\label{eq:L2_est}
c_1  \int \phi \, u^2 \le \int \phi\left(a_1f^2+a_2(\partial_x f )^2+a_3(\partial_x^2 f )^2\right) \le C_1 \int \phi \, u^2.
\end{equation}
\item If $u\in H^1$, then for any $d_1,d_2,d_3 > 0$, there  exist $c_1, C_1 >0$ depending on $d_j$, $j=1,2,3$, and $\lambda >0$ such that
\begin{equation}\label{eq:H1_est}
c_2 \int \phi (\partial_x u )^2 \le  \int \phi\left(d_1(\partial_x f )^2+d_2(\partial_x^2 f )^2+d_3(\partial_x^3 f)^2\right) \le C_2 \int \phi (\partial_x u)^2.
\end{equation}
\end{itemize}
\end{lemma}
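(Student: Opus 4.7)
The plan is to deduce both equivalences from the pointwise identity $u=f-\partial_x^2 f$ (and its differentiated version $\partial_x u = \partial_x f - \partial_x^3 f$) together with the identity \eqref{eq:L2} proved just above. The hypothesis $|\phi''|\le\lambda\phi$ with $\lambda\ll 1$ will serve exactly as the smallness needed to absorb the only error term that appears.

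\textbf{Step 1: the exact identity.} Expanding $u^2=(f-\partial_x^2 f)^2$ and integrating against $\phi$, with two integrations by parts (one on the cross term, pushing $\partial_x$ off each factor in turn), I obtain the pointwise version of \eqref{eq:L2}:
\begin{equation*}
\int\phi\,u^2 \;=\; \int\phi\,f^2 \;+\; 2\int\phi\,(\partial_x f)^2 \;+\; \int\phi\,(\partial_x^2 f)^2 \;-\; \int\phi''\,f^2 .
\end{equation*}
(This is \eqref{eq:L2} with $\varphi'$ replaced by $\phi$ and $\varphi'''$ by $\phi''$; the proof is the same integration by parts.) The assumption on $\phi$ gives $\left|\int\phi'' f^2\right|\le \lambda\int\phi f^2$, which will be the sole error to control.

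\textbf{Step 2: proof of \eqref{eq:L2_est}.} For the upper bound I show separately $\int\phi f^2, \int\phi(\partial_x f)^2, \int\phi(\partial_x^2 f)^2 \lesssim \int\phi u^2$: the identity in Step 1 and $|\phi''|\le\lambda\phi$ yield $(1-\lambda)\int\phi f^2\le\int\phi u^2$, and then the same identity bounds the other two summands. Adding these with the weights $a_1,a_2,a_3$ gives the constant $C_1=C_1(a_1,a_2,a_3,\lambda)$. For the lower bound I simply rewrite the identity in Step 1 as
\begin{equation*}
\int\phi\,u^2 \;\le\; (1+\lambda)\!\int\phi\,f^2 + 2\!\int\phi\,(\partial_x f)^2 + \!\int\phi\,(\partial_x^2 f)^2 \;\le\; C\!\int\phi\bigl(a_1 f^2+a_2(\partial_x f)^2+a_3(\partial_x^2 f)^2\bigr),
\end{equation*}
with $C=\max\bigl(\tfrac{1+\lambda}{a_1},\tfrac{2}{a_2},\tfrac{1}{a_3}\bigr)$, giving $c_1$.

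\textbf{Step 3: proof of \eqref{eq:H1_est}.} Differentiate the defining relation $f-\partial_x^2 f=u$ in $x$ to obtain $\partial_x f-\partial_x^2(\partial_x f)=\partial_x u$. Hence $\partial_x f$ is the canonical variable of $\partial_x u$ in the sense of Definition \ref{Can_Var}. Applying the $L^2$-level identity of Step 1 with $u\leadsto \partial_x u$ and $f\leadsto \partial_x f$,
\begin{equation*}
\int\phi\,(\partial_x u)^2 \;=\; \int\phi\,(\partial_x f)^2+2\!\int\phi\,(\partial_x^2 f)^2+\!\int\phi\,(\partial_x^3 f)^2-\!\int\phi''(\partial_x f)^2,
\end{equation*}
and the $|\phi''|\le\lambda\phi$ absorption argument is repeated verbatim to produce $c_2,C_2$ depending on $d_1,d_2,d_3,\lambda$.

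\textbf{Expected obstacle.} There is essentially no analytic obstacle: the whole statement is an elementary consequence of \eqref{eq:L2} once one observes that both $(u,f)$ and $(\partial_x u,\partial_x f)$ are canonical pairs. The only mildly delicate point is bookkeeping: one must check that the absorption $\lambda\!\int\phi f^2\le \lambda(1-\lambda)^{-1}\!\int\phi u^2$ (and its $H^1$ analogue) does not degenerate, which is precisely guaranteed by taking $\lambda$ strictly smaller than a constant depending only on $\min_i a_i$ (resp.\ $\min_i d_i$); this is the content of ``$\lambda$ small but fixed'' in the hypothesis.
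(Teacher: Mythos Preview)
The paper does not prove this lemma; it is stated with a citation to \cite{KMPP} and no argument is given here. Your proof is correct and is precisely the natural argument: it rests on the exact identity \eqref{eq:L2} (written for a general weight $\phi$), the absorption of the single error term $\int\phi'' f^2$ via $|\phi''|\le\lambda\phi$ with $\lambda<1$, and the observation that $(\partial_x u,\partial_x f)$ is again a canonical pair so that the $H^1$ case reduces to the $L^2$ case. One minor remark on your ``Expected obstacle'': the smallness needed on $\lambda$ is simply $\lambda<1$, so that $1-\lambda>0$; the resulting constants $c_1,C_1$ (and $c_2,C_2$) then depend on the $a_j$ (resp.\ $d_j$) and on $\lambda$, but no additional constraint linking $\lambda$ to $\min_j a_j$ is required.
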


The following results are well-known in the literature, see El Dika \cite{ElDika2005-2} for further details and proofs.

\begin{lemma}[\cite{ElDika2005-2,KMPP}]
The operator $ (1-\px^2)^{-1}$ satisfies the following properties:
\begin{itemize}
\item Suppose that $\phi =\phi(x)$ is such that
\[
(1-\px^2)^{-1}\phi(x) \lesssim \phi(x), \quad x\in \R,
\]
for $\phi(x) > 0$ satisfying $|\phi^{(n)}(x)| \lesssim \phi(x)$, $n \ge 0$. Then, for $v,w,h \in H^1$, we have
\begin{equation}\label{eq:nonlinear1-1}
\int \phi^{(n)} v (1-\px^2)^{-1} \partial_x (wh) ~\lesssim ~ \norm{v}_{H^1} \int \phi (w^2 + ( \partial_x w)^2 +h^2 + ( \partial_x h)^2),
\end{equation}
and
\begin{equation}\label{eq:nonlinear1-2}
\int\phi^{(n)} v (1-\px^2)^{-1}(wh) ~\lesssim ~\norm{v}_{H^1} \int \phi(w^2 +h^2).
\end{equation}
\item Under the previous conditions, we have
\begin{equation}\label{eq:nonlinear1-3}
\int  \partial_x (\phi  ~{}\partial_x v) (1-\px^2)^{-1}(wh) \lesssim \norm{v}_{H^1} \int \phi (w^2 + ( \partial_x w)^2 +h^2 + ( \partial_x h)^2).
\end{equation}
and
\begin{equation}\label{eq:nonlinear1-4}
\int \phi  \partial_x v (1-\px^2)^{-1}  \partial_x(wh) \lesssim \norm{v}_{H^1} \int \phi (w^2 + ( \partial_x w)^2 +h^2 + ( \partial_x h)^2).
\end{equation}
\end{itemize}
\end{lemma}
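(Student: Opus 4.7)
The plan is to reduce all four estimates to two pointwise ingredients: first, the explicit kernel representation
\[
(1-\partial_x^2)^{-1} f(x) \;=\; \tfrac{1}{2}\int_\R e^{-|x-y|} f(y)\, dy,
\]
together with the analogous kernel $-\tfrac{1}{2}\,\sgn(x-y)\,e^{-|x-y|}$ for $\partial_x(1-\partial_x^2)^{-1} = (1-\partial_x^2)^{-1}\partial_x$, which shows that both operators have an absolute-value kernel bounded by $\tfrac12 e^{-|x-y|}$; and second, the standing hypotheses $|\phi^{(n)}|\lesssim \phi$ and $(1-\partial_x^2)^{-1}\phi \lesssim \phi$, which combine to give $(1-\partial_x^2)^{-1}|\phi^{(n)}|(x)\lesssim \phi(x)$ for every $n\ge 0$. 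Together with the 1D Sobolev embedding $\|v\|_{L^\infty}\lesssim \|v\|_{H^1}$ and AM--GM, these ingredients will yield each bound.

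I would first handle \eqref{eq:nonlinear1-2} by moving the nonlocal operator onto $\phi^{(n)}v$ through self-adjointness,
\[
\int \phi^{(n)} v\,(1-\partial_x^2)^{-1}(wh)\,dx = \int (1-\partial_x^2)^{-1}(\phi^{(n)}v)\,wh\,dx,
\]
and then estimating pointwise
\[
\bigl|(1-\partial_x^2)^{-1}(\phi^{(n)}v)(x)\bigr| \le \|v\|_{L^\infty}\,(1-\partial_x^2)^{-1}|\phi^{(n)}|(x) \lesssim \|v\|_{H^1}\,\phi(x).
\]
The AM--GM inequality $|wh|\le \tfrac12(w^2+h^2)$ then yields the claim. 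For \eqref{eq:nonlinear1-1} the same argument applies after sending the self-adjoint piece $\partial_x(1-\partial_x^2)^{-1}$ to the $v$-side (with a sign change), since its kernel also has absolute value $\tfrac12 e^{-|x-y|}$.

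For \eqref{eq:nonlinear1-3} I would first integrate by parts once, using $\partial_x(1-\partial_x^2)^{-1} = (1-\partial_x^2)^{-1}\partial_x$, to obtain
\[
\int \partial_x(\phi\,\partial_x v)\,(1-\partial_x^2)^{-1}(wh) = -\int \phi\,\partial_x v\,(1-\partial_x^2)^{-1}(\partial_x w\, h + w\,\partial_x h).
\]
The crucial point is then to avoid extracting $\partial_x v$ pointwise in $L^\infty$, since $H^1(\R)\not\hookrightarrow W^{1,\infty}(\R)$. I would dualize and use the identity $\phi\,\partial_x v = \partial_x(\phi v)-\phi' v$ to write
\[
(1-\partial_x^2)^{-1}(\phi\,\partial_x v) = \partial_x(1-\partial_x^2)^{-1}(\phi v) - (1-\partial_x^2)^{-1}(\phi' v).
\]
Since both resulting kernels have absolute value $\le \tfrac12 e^{-|x-y|}$ and $|\phi'|\lesssim \phi$, both terms on the right are bounded pointwise by $\lesssim \|v\|_{H^1}\,\phi$. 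Combining this with the previous display and applying AM--GM to $|\partial_x w\, h|+|w\,\partial_x h|\le \tfrac12\bigl(w^2+(\partial_x w)^2+h^2+(\partial_x h)^2\bigr)$ gives \eqref{eq:nonlinear1-3}. The proof of \eqref{eq:nonlinear1-4} is identical: transfer the operator, apply the same decomposition, and finish by AM--GM.

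The main obstacle is the one just highlighted: any naive attempt on \eqref{eq:nonlinear1-3}--\eqref{eq:nonlinear1-4} that pulls $\|\partial_x v\|_{L^\infty}$ out of the kernel fails in one space dimension. The decomposition $\phi\,\partial_x v = \partial_x(\phi v)-\phi' v$ is precisely the device that reroutes the derivative through the self-adjoint nonlocal operator and keeps only $v$ (not $\partial_x v$) inside the pointwise kernel estimate; the extra derivative is then harmlessly absorbed by $(1-\partial_x^2)^{-1}$, and everything else reduces to the baseline bounds used for \eqref{eq:nonlinear1-1}--\eqref{eq:nonlinear1-2}.
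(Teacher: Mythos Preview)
The paper does not supply its own proof of this lemma; it simply attributes the result to El~Dika \cite{ElDika2005-2} and \cite{KMPP} and states that the estimates are well known. So there is no in-paper argument to compare against.

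Your proposal is correct. The kernel representation $(1-\partial_x^2)^{-1}f = \tfrac12 e^{-|\cdot|}\!*\!f$ together with the positivity of the kernel, the hypotheses $|\phi^{(n)}|\lesssim\phi$ and $(1-\partial_x^2)^{-1}\phi\lesssim\phi$, and the Sobolev embedding $H^1\hookrightarrow L^\infty$ give exactly the pointwise bound $\bigl|(1-\partial_x^2)^{-1}(\phi^{(n)}v)\bigr|\lesssim\|v\|_{H^1}\phi$ needed for \eqref{eq:nonlinear1-1}--\eqref{eq:nonlinear1-2}. Your treatment of \eqref{eq:nonlinear1-3}--\eqref{eq:nonlinear1-4} via the commutator-type identity $\phi\,\partial_x v = \partial_x(\phi v)-\phi' v$ is the right idea: it reroutes the dangerous derivative through the bounded operator $\partial_x(1-\partial_x^2)^{-1}$, whose kernel still has absolute value $\tfrac12 e^{-|x-y|}$, and leaves only $v$ (not $\partial_x v$) inside the pointwise estimate. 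For \eqref{eq:nonlinear1-4} specifically, the cleanest route is the one you sketch: move only the self-adjoint piece $(1-\partial_x^2)^{-1}$ onto $\phi\,\partial_x v$, keep $\partial_x(wh)=h\,\partial_x w + w\,\partial_x h$ on the right, and then apply the same pointwise bound on $(1-\partial_x^2)^{-1}(\phi\,\partial_x v)$.

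One minor remark: your argument for \eqref{eq:nonlinear1-1} actually proves the stronger bound $\lesssim \|v\|_{H^1}\int\phi(w^2+h^2)$, without derivative terms on the right, since after transferring the skew-adjoint operator $\partial_x(1-\partial_x^2)^{-1}$ you only see $wh$ and never $\partial_x(wh)$. That is of course consistent with the stated inequality.
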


The following lemmas, proved in \cite{KMM}, will be useful to estimates the terms on \eqref{eq:gvirial} which involves  the influence of the bottom $h$  and the nonlocal operator  $(1-\  \partial_x^2)^{-1}$.
This operator satisfies:
\begin{lemma}[\cite{KMM}]\label{lem:est_IOp}
	Let $f\in L^2(\R)$ and $0<\ <1$ fixed. We have the following estimates:
	\begin{enumerate}
		\item $\| (1-\  \partial_x^2)^{-1}f\|_{L^2(\R)}\leq \|f \|_{L^2(\R)}$,
		\vspace{0.1cm}
		\item $\| (1-\  \partial_x^2)^{-1}\partial_x f\|_{L^2(\R)}\leq \|f \|_{L^2(\R)}$,
		\vspace{0.1cm}
		\item $\| (1-\  \partial_x^2)^{-1}f\|_{H^2(\R)}\leq \|f \|_{L^2(\R)}$.
	\end{enumerate}
\end{lemma}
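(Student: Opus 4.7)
The plan is to prove all three bounds by Fourier analysis. Since $(1-\ve\,\partial_x^2)^{-1}$ is the Fourier multiplier with symbol $m(\xi) := (1+\ve\xi^2)^{-1}$, Plancherel's identity reduces each statement to a pointwise bound on $m$ (possibly multiplied by a power of $i\xi$) in the frequency variable $\xi \in \R$. Thus the three claims are entirely decoupled and each is handled by an elementary multiplier estimate on the Fourier side.

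For part (1), the bound $0 < m(\xi) \leq 1$ is immediate, which gives
\[
\|(1-\ve\,\partial_x^2)^{-1} f\|_{L^2}^2 = \int m(\xi)^2 \,|\hat f(\xi)|^2\,d\xi \leq \int |\hat f(\xi)|^2\,d\xi = \|f\|_{L^2}^2.
\]
For part (2), the operator $(1-\ve\,\partial_x^2)^{-1}\partial_x$ has symbol $i\xi\, m(\xi) = i\xi/(1+\ve\xi^2)$, and the AM--GM inequality $1+\ve\xi^2 \geq 2\sqrt{\ve}\,|\xi|$ yields the uniform pointwise bound $|\xi\, m(\xi)| \leq 1/(2\sqrt{\ve})$; Plancherel then delivers the claim. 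For part (3), since $\|g\|_{H^2}^2 \simeq \int (1+\xi^2)^2 |\hat g(\xi)|^2\,d\xi$, it suffices to bound the multiplier $(1+\xi^2) m(\xi) = (1+\xi^2)/(1+\ve\xi^2) \leq 1/\ve$, after which Plancherel closes the argument. In each case the verification reduces to optimizing a simple rational function of $\xi$, done either by AM--GM or a one-line calculus check.

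I do not expect any essential obstacle here: the three bounds are textbook Fourier-multiplier estimates whose proofs reduce to elementary inequalities. The only subtlety worth flagging is that the implicit constants appearing in parts (2) and (3) a priori depend on $\ve$, with sizes of order $1/\sqrt{\ve}$ and $1/\ve$ respectively; since $\ve \in (0,1)$ is fixed throughout the paper, this dependence is harmless in downstream applications, and the stated inequalities are to be read with constants absorbed into the implicit $\leq$ notation.
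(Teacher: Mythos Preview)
Your proposal is correct and is the standard Fourier-multiplier argument for these bounds. The paper itself does not supply a proof, citing instead \cite{KMM}; your approach via Plancherel and pointwise symbol bounds is exactly what one expects, and your remark that the constants in (2) and (3) carry an $\varepsilon$-dependence (of order $\varepsilon^{-1/2}$ and $\varepsilon^{-1}$ respectively) is a genuine and accurate observation that the lemma's statement as written obscures.
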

We also enunciate the following results that appeared in \cite{KMMV20,Mau}. Notice that now the variable in the weights will be $K=1/\lambda(t)$, with $\lambda(t)$ time dependent and increasing, but this shift will not affect the final outcome.
\begin{lemma}
	There exist $C>0$ such that for $0<K\leq 1$ and $g\in L^2$, 
	\begin{align}
	\label{eq:sech_Opg}
	\left\| \sech\left( K \cdot\right) (1-\ \partial_x^2)^{-1} g\right\|_{L^2}
	\leq{}~ C &\left\| (1-\ \partial_x^2)^{-1}\left[ \sech\left( K  \cdot \right)  g\right]\right\|_{L^2},
	\\
\label{eq:sech_Opg_p}
	\left\| \sech(K \cdot)(1-\  \partial_x^2)^{-1}\partial_x g \right\|_{L^2} 
	\leq&{}~ C  \left\| \sech(K \cdot) g \right\|_{L^2},
	\end{align}
and
\begin{equation}\label{eq:sech_Opg_1pp}
	\left\| \sech(K\cdot)(1-\  \partial_x^2)^{-1}(1-\partial_x^2)g \right\|_{L^2} \leq {}~ C  \| \sech(K\cdot ) g \|_{L^2}.
	\end{equation}
\end{lemma}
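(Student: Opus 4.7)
The plan is to exploit the explicit Green's function of $(1-\partial_x^2)^{-1}$, namely convolution with $\tfrac{1}{2}e^{-|x|}$, together with the \emph{slow-variation} properties of the weight $\phi(x) := \sech(Kx)$: one has $|\phi'| \leq K\phi$, $|\phi''| \leq 2K^2\phi$, and the pointwise ratio bound $\phi(x)/\phi(y) = \cosh(Ky)/\cosh(Kx) \leq 2e^{K|x-y|}$. These allow one to pass the weight through the nonlocal operator either at the cost of a controlled convolution tail or of a commutator that is of order $K$.

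For \eqref{eq:sech_Opg_p}, I would proceed by direct kernel estimates. Writing $(1-\partial_x^2)^{-1}\partial_x g$ as convolution with the odd kernel $-\tfrac{1}{2}\sgn(x-y)e^{-|x-y|}$ and applying the slow-variation ratio bound gives
\[
\bigl|\sech(Kx)(1-\partial_x^2)^{-1}\partial_x g(x)\bigr| \;\lesssim\; \int_{\R} e^{-(1-K)|x-y|}\,\sech(Ky)\,|g(y)|\,dy,
\]
so Young's convolution inequality with the $L^1$ kernel $e^{-(1-K)|\cdot|}$ yields \eqref{eq:sech_Opg_p} uniformly for $K$ bounded away from $1$. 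Estimate \eqref{eq:sech_Opg_1pp} is essentially trivial: since $(1-\partial_x^2)^{-1}(1-\partial_x^2) = \mathrm{Id}$ on $H^2$, its left-hand side equals $\|\sech(K\cdot)g\|_{L^2}$ exactly.

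The substantive case is \eqref{eq:sech_Opg}. Setting $u := (1-\partial_x^2)^{-1}g$, a direct product-rule calculation gives the commutator identity
\[
\phi\,g \;=\; \phi(1-\partial_x^2)u \;=\; (1-\partial_x^2)(\phi u) \,+\, 2\phi' u' \,+\, \phi'' u,
\]
and applying $(1-\partial_x^2)^{-1}$ and rearranging,
\[
\phi u \;=\; (1-\partial_x^2)^{-1}[\phi g] \;-\; (1-\partial_x^2)^{-1}\bigl[2\phi' u' + \phi'' u\bigr].
\]
To estimate the remainder, I would rewrite $2\phi' u' = 2\partial_x(\phi' u) - 2\phi'' u$ so that only the $L^2$-bounded operators $(1-\partial_x^2)^{-1}$ and $(1-\partial_x^2)^{-1}\partial_x$ (of norms $\leq 1$ and $\leq 1/2$ respectively) act on quantities pointwise dominated by $K\phi u$ and $K^2\phi u$. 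This produces
\[
\|\phi u\|_{L^2} \;\leq\; \bigl\|(1-\partial_x^2)^{-1}[\phi g]\bigr\|_{L^2} \,+\, C(K + K^2)\|\phi u\|_{L^2},
\]
and the remainder can be absorbed into the left-hand side for $K$ sufficiently small, yielding the claimed inequality with a uniform constant.

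The main obstacle is the absorption step in \eqref{eq:sech_Opg}: it hinges crucially on the fact that the commutator remainder is of order $K$ relative to $\|\phi u\|_{L^2}$, which follows from the slow-variation bounds $|\phi^{(j)}| \lesssim K^j\phi$ and is only useful when $K$ is small. For the regime actually used in the paper, $K = 1/\lambda(t)$ with $\lambda(t) \to \infty$, this smallness is automatic; a uniform constant for the entire range $K \leq 1$ requires either a finer accounting of the operator norms or, as in \cite{KMMV20,Mau}, a rescaling argument exploiting that $\sech(Kx)$ and $e^{-|x|}$ have comparable decay scales when $K$ is close to $1$.
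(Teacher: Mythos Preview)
The paper does not supply its own proof of this lemma; it simply cites \cite{KMMV20,Mau}. Your approach---the kernel representation of $(1-\partial_x^2)^{-1}\partial_x$ together with the ratio bound $\sech(Kx)/\sech(Ky)\le 2e^{K|x-y|}$ and Young's inequality for \eqref{eq:sech_Opg_p}, and the commutator identity
\[
\phi u=(1-\partial_x^2)^{-1}[\phi g]-(1-\partial_x^2)^{-1}\bigl[2\partial_x(\phi' u)-\phi'' u\bigr]
\]
with absorption of the $O(K)$ remainder for \eqref{eq:sech_Opg}---is precisely the standard mechanism one finds in those references, so your proposal is correct in method and essentially reproduces what is there.

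You are also right to isolate the one genuine subtlety. With the sharp bounds $|\phi'|\le K\phi$, $|\phi''|\le K^2\phi$ and the operator norm $\|(1-\partial_x^2)^{-1}\partial_x\|_{L^2\to L^2}\le\tfrac12$, your remainder constant in \eqref{eq:sech_Opg} is $K+K^2$, so absorption succeeds only for $K<(\sqrt5-1)/2\approx0.618$; likewise the $L^1$ norm of $e^{-(1-K)|\cdot|}$ in your treatment of \eqref{eq:sech_Opg_p} diverges as $K\to1$. This is not a flaw in your reasoning but a feature of the problem: in every application in the paper the scale is $K=1/\lambda(t)$ with $\lambda(t)\ge\lambda(11)\gg1$, so $K$ is small and the absorption is automatic. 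The stated range $0<K\le1$ is stronger than what the paper actually needs, and covering the endpoint would indeed require the additional rescaling argument you allude to from \cite{KMMV20,Mau}.

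One small remark on \eqref{eq:sech_Opg_1pp}: you note the identity on $H^2$, but the hypothesis is only $g\in L^2$; the composition $(1-\partial_x^2)^{-1}(1-\partial_x^2)$ is still the identity on $L^2$ (interpreting $(1-\partial_x^2)g\in H^{-2}$ and $(1-\partial_x^2)^{-1}:H^{-2}\to L^2$), so your conclusion is unchanged.
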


\section{Modified virial functionals}\label{3}

This Section is devoted to the introduction and study of three virial functionals adapted to the uneven bottom, and their behavior under the $H^1\times H^1$ flow.  Let $\vp=\vp(x)$ be a smooth, bounded weight function, to be chosen later. For each $t\in\R$, we consider the following functionals for some $\vp$ (to be chosen later):
\be\label{I}
\mathcal I(t) := \int \vp(x)(u\eta + \px u \px\eta)(t,x)dx,
\ee
and
\be\label{J}
\mathcal J(t) := \int \vp'(x)(\eta \px u)(t,x)dx.
\ee
Clearly each functional above is well-defined for $H^1\times H^1$ functions, as long as the pair $(u,\eta)(t=0)$ is small in the energy space.

\subsection{Virial functional $\mathcal I(t)$} Using \eqref{boussinesq} and integration by parts, we have the following result.

\begin{lemma}\label{Virial_bous}
Consider $\mathcal I(t)$ in \eqref{I}. Then for any $t\geq0$,
\be\label{Virial0}
\begin{aligned}
\frac{d}{dt} \mathcal I(t) = &~  {}   -\frac{a}2 \int  \varphi' (\partial_x u)^2-\frac{c}2 \int  \varphi' (\partial_x \eta)^2 \\
&~{}   - \left(a+\frac12\right)   \int  \varphi' u^2 -\left( c+ \frac12 \right)  \int  \varphi' \eta^2 \\
& ~ {} + (1+ a)\int \varphi' u (1-\partial_x^2)^{-1} u  + (1+ c)\int \varphi' \eta (1-\partial_x^2)^{-1} \eta  \\
&~ {}     -\frac12 \int  \varphi' u^2 \eta   + \int \varphi' u (1-\partial_x^2)^{-1}\left(u \eta \right)  + \frac12\int \varphi' \eta (1-\partial_x^2)^{-1}\left(u^2\right)\\
&~{} - \frac12 \int  \partial_x (\varphi h) u^2  + \int \varphi' u (1-\partial_x^2)^{-1}\left( u h \right) \\
&~ -  c_1 \int \varphi' \eta (1-\partial_x^2)^{-1}  \partial_t^2 \partial^2_x h  +  \int  \varphi' u(1-\partial^2_x)^{-1}(1- a_1\partial_x^2)\partial_t\partial_{x}h\\
&~{} +c_1  \int \varphi \eta \partial_t^2 \partial_x h + \int \varphi u (-1+  a_1 \partial_x^2)\partial_t h.
\end{aligned}
\ee
\end{lemma}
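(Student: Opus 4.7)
The plan is to differentiate $\mathcal I(t)$ under the integral sign, eliminate the time derivatives via \eqref{boussinesq}, and reorganize everything via repeated integration by parts in $x$. Denote by $F_u$ and $F_\eta$ the right-hand sides of the $u$- and $\eta$-equations of \eqref{boussinesq}, so that $(1-\px^2)\pt u = F_u$ and $(1-\px^2)\pt \eta = F_\eta$. Leibniz's rule yields four terms. The first key simplification comes from pairing $\int \vp u\,\pt\eta$ with $\int \vp \px u\,\px\pt\eta$: integrating by parts in $x$ in the latter and then noting that the combination reconstructs the elliptic operator, one obtains the identity
\[
\int \vp u\,\pt\eta + \int \vp \px u\,\px\pt\eta = \int \vp u\,(1-\px^2)\pt\eta - \int \vp' u\,\px\pt\eta = \int \vp u F_\eta - \int \vp' u\,\px\pt\eta,
\]
and analogously for the $(\pt u,\eta)$ pair. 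This reduces the computation to
\[
\frac{d}{dt}\mathcal I(t) = \int \vp u F_\eta + \int \vp \eta F_u - \int \vp' u\,\px\pt\eta - \int \vp'\eta\,\px \pt u.
\]

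The two remainder terms are processed by one further IBP in $x$ (moving $\px$ off $\pt u$, $\pt\eta$), then replacing $\pt u=(1-\px^2)^{-1}F_u$ and $\pt\eta=(1-\px^2)^{-1}F_\eta$ from the system, and finally using the self-adjointness of $(1-\px^2)^{-1}$ to transfer the nonlocal operator onto the $\vp$-weighted factors. This yields the compact intermediate form
\[
\frac{d}{dt}\mathcal I(t) = \int F_\eta \bigl[\vp u + (1-\px^2)^{-1}(\vp'' u + \vp'\px u)\bigr] + \int F_u \bigl[\vp\eta + (1-\px^2)^{-1}(\vp''\eta+\vp'\px\eta)\bigr],
\]
from which the stated expansion will follow by substituting the explicit $F_u, F_\eta$.

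I then expand, using standard identities such as $\int \vp u\,\px u=-\tfrac12\int\vp' u^2$ and $\int \vp \px u\,\px^2 u = -\tfrac12\int\vp'(\px u)^2$. The dispersive parts $-a\px^3 u, -\px u$ of $F_\eta$ and $-c\px^3\eta, -\px\eta$ of $F_u$ generate the local quadratic contributions in the first two lines of \eqref{Virial0}, with coefficients $-a/2, -c/2, -(a+\tfrac12), -(c+\tfrac12)$ after collecting. The nonlocal quadratic pieces in the third line emerge through the identity $(1-\px^2)^{-1}\px^2=(1-\px^2)^{-1}-1$, which converts the nonlocal pairings coming from the $\px^3$ linear terms into the clean form $(1+a)\int\vp' u(1-\px^2)^{-1}u$ and $(1+c)\int\vp'\eta(1-\px^2)^{-1}\eta$; this is precisely where the shifts $1+a$ and $1+c$ appear. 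The cubic nonlinearities $\px((\eta+h)u)$ and $\px(\tfrac12 u^2)$ yield the fourth and fifth lines, and the $h$-source contributions of $F_u, F_\eta$ produce the last three lines, partly acting directly against $\vp u,\vp\eta$ and partly against $\vp',\vp''$-weighted factors through $(1-\px^2)^{-1}$.

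The main obstacle is purely clerical: every IBP splits a term into several summands, and one must consistently track which pieces contribute to the local quadratic block, to the $(1-\px^2)^{-1}$-coupled block, to the cubic block, or to the $h$-source block. The most delicate cancellations are among the nonlocal quadratic terms, where the contributions arising from $\int \vp u F_\eta$ and $\int F_\eta(1-\px^2)^{-1}(\vp'' u+\vp'\px u)$ must be combined through the $(1-\px^2)^{-1}\px^2$ identity to produce the clean prefactors $(1+a)$ and $(1+c)$. For the $h$-terms no further cancellation is expected; the different orders of derivatives of $h$ simply sort themselves according to whether they come through $F_u$ or $F_\eta$, and through the direct pairing with $\vp$ or through the $(1-\px^2)^{-1}$-weighted pairing with $\vp',\vp''$.
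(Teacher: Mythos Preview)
Your proposal is correct and follows essentially the same route as the paper. Both arguments start from the identical four-term splitting
\[
\frac{d}{dt}\mathcal I(t)=\int\varphi u\,(1-\partial_x^2)\partial_t\eta+\int\varphi\eta\,(1-\partial_x^2)\partial_t u-\int\varphi' u\,\partial_x\partial_t\eta-\int\varphi'\eta\,\partial_x\partial_t u,
\]
substitute the equations, and rely on the key identity $(1-\partial_x^2)^{-1}\partial_x^2=(1-\partial_x^2)^{-1}-1$ to produce the coefficients $(1+a),(1+c)$; the only cosmetic difference is that you pass through a self-adjointness-based ``compact intermediate form'' involving $(1-\partial_x^2)^{-1}(\varphi'' u+\varphi'\partial_x u)$, whereas the paper instead groups $I_1+I_4$ and $I_2+I_3$ and uses the equivalent algebraic trick $\partial_x^2(\varphi'\eta)-\varphi'\eta=-(1-\partial_x^2)(\varphi'\eta)$ directly.
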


\begin{proof}
We compute:
\[
\begin{aligned}
\frac{d}{dt} \mathcal I(t) =&~  \int \varphi (\partial_t \eta u + \eta \partial_t u+ \partial_t\partial_xu \partial_x \eta +\partial_xu  \partial_t\partial_x \eta )\\
=&~ {} \int \varphi (\partial_t \eta -\partial_t \partial_x^2 \eta) u + \int  \varphi (\partial_t u  - \partial_t \partial_x^2 u) \eta -\int \varphi' \eta \partial_t \partial_x u -\int \varphi' u \partial_t \partial_x\eta \\
=:&~{} I_1+I_2+I_3+I_4.
\end{aligned}
\]
Replacing \eqref{boussinesq}, and integrating by parts, we get
\[
\begin{aligned}
I_1 =&~  \int \partial_x(\varphi  u ) (a \partial_x^2 u +u +u(\eta+ h) ) + \int \varphi u (-1+  a_1 \partial_x^2)\partial_t h,
\end{aligned}
\]
and
\[
\begin{aligned}
I_2 =&~   \int  \partial_x(\varphi \eta) \left(c\partial_x^2 \eta +\eta + \frac12 u^2\right)+c_1  \int \varphi \eta  \partial_t^2\partial_xh.
\end{aligned}
\]
First of all, using \eqref{boussinesq},
\[
\begin{aligned}
I_3= &~  \int \partial_x (\varphi' \eta) \partial_t u =  \int \partial_x^2 (\varphi' \eta) (1-\partial_x^2)^{-1}\left(c\partial_x^2 \eta +\eta +\frac12 u^2\right)\\
&+ c_1 \int \partial_x (\varphi' \eta) (1-\partial_x^2)^{-1}  \partial_t^2 \partial_x h\\
=&~   \int \left( \partial_x^2  (\varphi' \eta) - \varphi' \eta \right) (1-\partial_x^2)^{-1}\left(c \partial_x^2 \eta +\eta +\frac12 u^2\right) \\
&~ {}  + \int \varphi' \eta (1-\partial_x^2)^{-1}\left(c \partial_x^2 \eta +\eta +\frac12 u^2\right) + c_1 \int \partial_x (\varphi' \eta) (1-\partial_x^2)^{-1}  \partial_t^2 \partial_x h \\
=&~  - \int  \varphi' \eta\left(c\partial_x^2 \eta +\eta +\frac12 u^2\right) \\
&~ {}  + \int \varphi' \eta (1-\partial_x^2)^{-1}\left(c\partial_x^2  \eta +\eta +\frac12 u^2\right) +  c_1 \int \partial_x (\varphi' \eta) (1-\partial_x^2)^{-1}  \partial_t^2 \partial_x h .\\ 
\end{aligned}
\]
Therefore,
\begin{equation}\label{I2+I3}
\begin{aligned}
I_2 + I_3 =& \int  \varphi \partial_x \eta \left(c \partial_x^2  \eta +\eta + \frac12 u^2\right)+ \int \varphi' \eta (1-\partial_x^2)^{-1}\left(c\partial_x^2 \eta +\eta +\frac12 u^2\right)\\
&+c_1  \int \varphi \eta \partial_t^2 \partial_x h  + c_1 \int \partial_x (\varphi' \eta) (1-\partial_x^2)^{-1}   \partial_t^2 \partial_x h .
\end{aligned}
\end{equation}
Similarly,
\[
\begin{aligned}
I_4  =&~ -\int \varphi' u \partial_t \partial_x\eta\\
=&~ \int \varphi' u (1- \partial_x^2)^{-1} \partial_x^2\! \left( a\, \partial_x^2 u +u + (\eta+h) u  \right)- \int \varphi' u (1- \partial_x^2)^{-1} \partial_x (-1 + a_1 \partial_x^2) \partial_th   \\
= &~{}  - \int  \varphi' u \left(a \partial_x^2 u +u +u (\eta+ h) \right)   + \int \varphi' u (1-\partial_x^2)^{-1}\left(a \partial_x^2 u + u + u (\eta+ h)\right) \\
&~{}  - \int  \varphi' u(1-\partial^2_x)^{-1}(-1+ a_1\partial^2_x)\partial_{x}\partial_{t} h,
\end{aligned}
\]
and
\begin{equation}\label{I1+I4}
\begin{aligned}
I_1 + I_4 =& \int  \varphi \partial_x u \left(a \partial_x^2 u  +u + u (\eta +h) \right) \\
&~{} + \int \varphi' u (1-\partial_x^2)^{-1}\left(a \partial_x^2 u +u + u (\eta+ h) \right)\\
&~ {} - \int  \varphi' u(1-\partial^2_x)^{-1}(-1+ a_1\partial^2_x)\partial_{x}\partial_{t} h+ \int \varphi u (-1+  a_1 \partial_x^2)\partial_t h.
\end{aligned}
\end{equation}
Gathering \eqref{I2+I3} and \eqref{I1+I4}, we have
\[
\begin{aligned}
\frac{d}{dt}\mathcal I(t) = &~ \int  \varphi \partial_x u \left(a \partial_x^2  u +u +  u (\eta+ h) \right)\\
&~{} + \int \varphi' u (1-\partial_x^2)^{-1}\left(a \partial_x^2  u +u +u (\eta+ h) \right)\\
&~ {} + \int  \varphi \partial_x \eta \left(c\partial_x^2  \eta +\eta + \frac12 u^2\right)+ \int \varphi' \eta (1-\partial_x^2)^{-1}\left(c \partial_x^2 \eta +\eta +\frac12 u^2\right) \\
&~ {}+  c_1 \int \partial_x (\varphi' \eta) (1-\partial_x^2)^{-1}   \partial_t^2 \partial_x h  - \int  \varphi' u(1-\partial^2_x)^{-1}(-1+ a_1\partial_x^2)\partial_t\partial_{x}h \\
&~{} +c_1  \int \varphi \eta \partial_t^2 \partial_x h + \int \varphi u (-1+  a_1 \partial_x^2)\partial_t h\\
= &~ \int  \varphi \partial_x u \left(a \partial_x^2 u +u + u \eta \right)+ \int \varphi' u (1-\partial_x^2)^{-1}\left(a \partial_x^2 u +u + u \eta \right)\\
&~ {} + \int  \varphi \partial_x \eta \left(c \partial_x^2 \eta +\eta + \frac12 u^2\right)+ \int \varphi' \eta (1-\partial_x^2)^{-1}\left(c \partial_x^2 \eta +\eta +\frac12 u^2\right) \\
 &~ + \int  \varphi \partial_x u \left( uh \right)  +  \int \varphi' u (1-\partial_x^2)^{-1}\left( u h \right) \\
&~ + c_1  \int \partial_x (\varphi' \eta) (1-\partial_x^2)^{-1}  \partial_t^2 \partial_x h - \int  \varphi' u(1-\partial^2_x)^{-1}(-1+ a_1\partial_x^2)\partial_t\partial_{x}h\\
&~{} +c_1  \int \varphi \eta \partial_t^2 \partial_x h + \int \varphi u (-1+  a_1 \partial_x^2)\partial_t h\\
= : &~ \tilde I_1 + \tilde I_2 + \tilde I_3 +\tilde I_4+ \tilde I_5 (h).
\end{aligned}
\]
Now we compute each $\tilde I_j$. First,
\[
\tilde I_1 =  -\frac{a}2 \int  \varphi'  (\partial_x u)^2  -\frac12  \int  \varphi' u^2  -\frac12 \int  \varphi' u^2 \eta - \frac12 \int \varphi u^2 \partial_x \eta.
\]
Second,
\[
\tilde I_3=  -\frac{c}2 \int  \varphi' ( \partial_x  \eta)^2 -\frac12 \int  \varphi' \eta^2 + \frac12 \int  \varphi  \partial_x \eta u^2.
\]
Consequently,
\[
\tilde I_1+ \tilde I_3 = -\frac{a}2 \int  \varphi' ( \partial_x u)^2-\frac{c}2 \int  \varphi' ( \partial_x \eta)^2   -\frac12  \int  \varphi' (u^2 +\eta^2)  -\frac12 \int  \varphi' u^2 \eta.
\]
On the other hand,
\[
\begin{aligned}
\tilde I_2 =  &~  \int \varphi' u (1-\partial_x^2)^{-1}\left(a  \partial_x^2  u +u +u \eta \right) \\
= &~ {} a\int \varphi' u (1-\partial_x^2)^{-1} ( \partial_x^2 u -u) \\
&~ {}+ (1+ a)\int \varphi' u (1-\partial_x^2)^{-1} u   + \int \varphi' u (1-\partial_x^2)^{-1}\left(u \eta \right) \\
= &~ {}  - a\int \varphi' u^2 + (1+ a)\int \varphi' u (1-\partial_x^2)^{-1} u   + \int \varphi' u (1-\partial_x^2)^{-1}\left(u \eta \right).
\end{aligned}
\]
Similarly,
\[
\begin{aligned}
\tilde I_4 =  &~  \int \varphi' \eta (1-\partial_x^2)^{-1}\left(c \partial_x^2 \eta +\eta +\frac12 u^2\right) \\
= &~ {} c \int \varphi' \eta (1-\partial_x^2)^{-1} ( \partial_x^2 \eta -\eta ) \\
&~ {}+ (1+ c)\int \varphi' \eta (1-\partial_x^2)^{-1} \eta   +\frac12 \int \varphi' \eta (1-\partial_x^2)^{-1}\left(u^2\right) \\
= &~ {}  - c\int \varphi' \eta^2 + (1+ c)\int \varphi' \eta (1-\partial_x^2)^{-1} \eta   + \frac12\int \varphi' \eta (1-\partial_x^2)^{-1}\left(u^2\right).
\end{aligned}
\]
Finally,
\begin{align*}
  \tilde I_5= &~  \int  \varphi  \partial_x u \left( uh \right) +  \int \varphi' u (1-\partial_x^2)^{-1}\left( u h \right)\\
&~ + c_1 \int  \partial_x (\varphi' \eta) (1-\partial_x^2)^{-1}  \partial_t^2 \partial_x h  - \int  \varphi' u(1-\partial^2_x)^{-1}(-1+ a_1\partial_x^2)\partial_t\partial_{x}h \\
&~{} +c_1  \int \varphi \eta \partial_t^2 \partial_x h + \int \varphi u (-1+  a_1 \partial_x^2)\partial_t h\\
=&~ - \frac12  \int   \partial_x (\varphi h) u^2  +  \int \varphi' u (1-\partial_x^2)^{-1}\left( u h \right) \\
&~ - c_1 \int \varphi' \eta (1-\partial_x^2)^{-1}   \partial_t^2 \partial^2_x h -  \int  \varphi' u(1-\partial^2_x)^{-1}(-1+  a_1\partial_x^2)\partial_t\partial_{x}h\\
&~{} +c_1  \int \varphi \eta \partial_t^2 \partial_x h + \int \varphi u (-1+  a_1 \partial_x^2)\partial_t h .
\end{align*}
We conclude that
\[
\begin{aligned}
\frac{d}{dt} \mathcal I(t) = &~ \tilde I_1 + \tilde I_2 + \tilde I_3 +\tilde I_4+ \tilde I_5\\
=&~  {}  -\frac{a}2 \int  \varphi' ( \partial_x  u)^2-\frac{c}2 \int  \varphi' ( \partial_x \eta)^2   -\frac12  \int  \varphi' (u^2 +\eta^2)  -\frac12 \int  \varphi' u^2 \eta \\
& ~ {} - a\int \varphi' u^2 + (1+ a)\int \varphi' u (1-\partial_x^2)^{-1} u   + \int \varphi' u (1-\partial_x^2)^{-1}\left(u \eta \right) \\
&~ {} - c\int \varphi' \eta^2 + (1+ c)\int \varphi' \eta (1-\partial_x^2)^{-1} \eta   + \frac12\int \varphi' \eta (1-\partial_x^2)^{-1}\left(u^2\right) \\
&~ - \frac12 \int   \partial_x  (\varphi h) u^2  + \int \varphi' u (1-\partial_x^2)^{-1}\left( u h \right) \\
&~ -  c_1 \int \varphi' \eta (1-\partial_x^2)^{-1}   \partial_t^2 \partial^2_x h  - \int  \varphi' u(1-\partial^2_x)^{-1}(-1+ a_1\partial_x^2)\partial_t\partial_{x}h \\
&~{} +c_1  \int \varphi \eta \partial_t^2 \partial_x h + \int \varphi u (-1+  a_1 \partial_x^2)\partial_t h.
\end{aligned}
\]
Finally, rearranging terms,
\[
\begin{aligned}
\frac{d}{dt} \mathcal I(t)=&~  {}  -\frac{a}2 \int  \varphi' ( \partial_x  u)^2-\frac{c}2 \int  \varphi' ( \partial_x \eta)^2   - \left(a+\frac12\right)   \int  \varphi' u^2 -\left( c+ \frac12 \right)  \int  \varphi' \eta^2 \\
& ~ {} + (1+ a)\int \varphi' u (1-\partial_x^2)^{-1} u  + (1+ c)\int \varphi' \eta (1-\partial_x^2)^{-1} \eta  \\
&~ {}     -\frac12 \int  \varphi' u^2 \eta   + \int \varphi' u (1-\partial_x^2)^{-1}\left(u \eta \right)  + \frac12\int \varphi' \eta (1-\partial_x^2)^{-1}\left(u^2\right)\\
&~{}  - \frac12 \int   \partial_x (\varphi h) u^2   +  \int \varphi' u (1-\partial_x^2)^{-1}\left( u h \right) \\
&~ -  c_1 \int \varphi' \eta (1-\partial_x^2)^{-1}   \partial_t^2 \partial^2_x h  - \int  \varphi' u(1-\partial^2_x)^{-1}(-1+ a_1\partial_x^2)\partial_t\partial_{x}h\\
&~{} +c_1  \int \varphi \eta \partial_t^2 \partial_x h + \int \varphi u (-1+  a_1 \partial_x^2)\partial_t h.
\end{aligned}
\]
This last equality proves \eqref{Virial0}.
\end{proof}

\subsection{Virial functional $\mathcal J(t)$}

In what follows, we recall the system \eqref{boussinesq}-\eqref{Conds} written in the equivalent form
\begin{equation}\label{eq:abcd}
\begin{aligned}
\pt \eta  = &~{} a \px u -(1+a)(1-\partial_x^2)^{-1}\px u - (1-\partial_x^2)^{-1}\px(u(\eta + h)) \\
& ~{} +(1-\partial^2_x)^{-1}\left(-1+ a_1 \partial_x^2\right) \partial_t h \\
\pt u  =  &~{} c \px \eta -(1+c)(1-\partial_x^2)^{-1}\px \eta - \frac12(1-\partial_x^2)^{-1}\px(u^2)\\
& ~{}+ c_1 (1-\partial^2_x)^{-1}  \partial_t^2 \partial_x h.
\end{aligned}
\end{equation}

\begin{lemma}\label{lem:J}
Consider the functional $\mathcal J$ from \eqref{J}. Then for any $t \geq0$,
\begin{equation}\label{eq:J-1}
\begin{aligned}
\frac{d}{dt} \mathcal J(t)=&~  (1+c)\int\vp'\eta^2 - c\int\vp' ( \partial_x \eta)^2  -(1+a)\int\vp' u^2 + a \int\vp' ( \partial_x u)^2\\
&-(1+c)\int\vp'\eta\nlop\eta + (1+a)\int\vp' u \nlop u\\
&+(1+a) \int \vp''u \nlop  \partial_x u + \frac{c}{2}\int\vp''' \eta^2\\
&-\frac12\int\vp' u^2\eta -\frac12\int\vp' \eta\nlop \left( u^2 \right)  \\
&+ \int \vp' u\nlop \left( u\eta \right) +\int \vp'' u\nlop  \partial_x ( u\eta )\\
&- \int\vp'u^2 h + \int\vp'u\nlop(u h) + \int \vp''u\nlop  \partial_x(u h)\\
&+ \int\vp'  \partial_x u (1-\partial^2_x)^{-1}\left(-1+ a_1 \partial_x^2\right) \partial_t h+ c_1   \int\varphi'\eta (1-\partial^2_x)^{-1} \partial_t^2 \partial_x^2 h.
\end{aligned}
\end{equation}
\end{lemma}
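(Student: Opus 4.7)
\textbf{Proof plan for Lemma \ref{lem:J}.} The plan is to differentiate $\mathcal{J}(t) = \int \vp' \eta\, \partial_x u\, dx$ directly under the integral sign, split the result as
\[
\frac{d}{dt}\mathcal{J}(t) \;=\; \underbrace{\int \vp'\,(\partial_t\eta)\,\partial_x u\,dx}_{=: \mathcal{J}_1} \;+\; \underbrace{\int \vp'\,\eta\,\partial_x(\partial_t u)\,dx}_{=: \mathcal{J}_2},
\]
and then substitute the equivalent form \eqref{eq:abcd} of the system into both $\partial_t\eta$ and $\partial_t u$. No Hamiltonian cancellation is needed: the whole computation is a careful unwinding of linear and bilinear terms by integration by parts, so the entire task reduces to bookkeeping.

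The first computational ingredient is the standard identity for $\mathcal{L}^{-1}=(1-\partial_x^2)^{-1}$, namely $\mathcal{L}^{-1}\partial_x^2 f = -f + \mathcal{L}^{-1}f$, together with $[\partial_x,\mathcal{L}^{-1}]=0$. For the term $\mathcal{J}_1$ I would process, separately, the contributions coming from each of the four summands of $\partial_t\eta$ in \eqref{eq:abcd}:
the $a\,\partial_x u$ piece gives $a\int\vp'(\partial_x u)^2$; each of the $\mathcal{L}^{-1}\partial_x(\cdot)$ pieces (linear, nonlinear quadratic with $\eta$, and nonlinear with $h$) is handled by integrating $\partial_x$ off the factor $\partial_x u$, i.e.
\[
-\!\int \vp'\,\partial_x u\,\mathcal{L}^{-1}\partial_x F \,=\, \int u\,\partial_x\!\bigl(\vp'\,\mathcal{L}^{-1}\partial_x F\bigr)\,=\, \int \vp''\,u\,\mathcal{L}^{-1}\partial_x F \,+\,\int \vp'\,u\,\mathcal{L}^{-1}\partial_x^2 F,
\]
and then applying $\mathcal{L}^{-1}\partial_x^2 F = -F + \mathcal{L}^{-1}F$ with $F\in\{u,\;u\eta,\;uh\}$. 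This produces exactly the groups
$(1+a)\int\vp''u\,\mathcal{L}^{-1}\partial_x u + (1+a)\int\vp' u\,\mathcal{L}^{-1}u - (1+a)\int \vp' u^2$,
the triple $\int\vp''u\,\mathcal{L}^{-1}\partial_x(u\eta) + \int\vp' u\,\mathcal{L}^{-1}(u\eta)-\int\vp' u^2\eta$,
and its analogue with $\eta$ replaced by $h$. The source term in $\partial_t\eta$ is passed through untouched and yields $\int\vp'\,\partial_x u\,\mathcal{L}^{-1}(-1+a_1\partial_x^2)\partial_t h$.

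For $\mathcal{J}_2$ I would first push one derivative inside by writing $\partial_x(\partial_t u)$ from \eqref{eq:abcd}, then treat each summand:
the local term $c\int\vp'\eta\,\partial_x^2\eta$ gives $-c\int\vp'(\partial_x\eta)^2 + \frac{c}{2}\int\vp'''\eta^2$ after two integrations by parts; the terms $-(1+c)\int\vp'\eta\,\mathcal{L}^{-1}\partial_x^2\eta$ and $-\tfrac12\int\vp'\eta\,\mathcal{L}^{-1}\partial_x^2(u^2)$ are expanded via the same identity $\mathcal{L}^{-1}\partial_x^2 = -\mathrm{Id}+\mathcal{L}^{-1}$ producing $(1+c)\int\vp'\eta^2 - (1+c)\int\vp'\eta\,\mathcal{L}^{-1}\eta$ and $\tfrac12\int\vp'u^2\eta - \tfrac12\int\vp'\eta\,\mathcal{L}^{-1}(u^2)$ respectively; the source term contributes directly $c_1\int\vp'\eta\,\mathcal{L}^{-1}\partial_t^2\partial_x^2 h$. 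The partial $\tfrac12\int\vp'u^2\eta$ now combines with the $-\int\vp' u^2\eta$ from $\mathcal{J}_1$ to give the announced $-\tfrac12\int\vp' u^2\eta$.

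\textbf{Main obstacle.} There is no conceptual difficulty, only a bookkeeping one: one must keep track of the exact coefficients coming from the identity $\mathcal{L}^{-1}\partial_x^2 = -\mathrm{Id}+\mathcal{L}^{-1}$ applied six times with different arguments, and make sure the derivatives falling on $\vp$ (producing $\vp''$ or $\vp'''$) are routed to the correct terms so that the cross terms $\int\vp''u\,\mathcal{L}^{-1}\partial_x(\cdot)$ and the cubic $\int\vp'u^2\eta$ appear with the stated coefficients. Once this is done, summing $\mathcal{J}_1+\mathcal{J}_2$ and grouping the output by algebraic type (quadratic local, quadratic nonlocal with $\mathcal{L}^{-1}$, cubic local, cubic nonlocal, and $h$-sources) reproduces \eqref{eq:J-1} line by line.
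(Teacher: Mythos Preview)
Your proposal is correct and follows essentially the same route as the paper: split $\frac{d}{dt}\mathcal J$ into the $\partial_t\eta$ and $\partial_x\partial_t u$ contributions, substitute the nonlocal form \eqref{eq:abcd}, integrate by parts on $\partial_x u$ in the first piece, and repeatedly apply $(1-\partial_x^2)^{-1}\partial_x^2=-\mathrm{Id}+(1-\partial_x^2)^{-1}$ (the paper records this as \eqref{eq:trick1}). The bookkeeping you describe for the $\vp''$, $\vp'''$ and cubic cross terms matches the paper's computation line by line.
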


\begin{proof}
We compute, using \eqref{eq:abcd},
\[\begin{aligned}
\frac{d}{dt} \mathcal J(t) =&\int\vp'\left( \partial_t \eta  \partial_x u +  \partial_x  \partial_t \eta u \right)\\
=&\int \vp'  \partial_x u \bigg(a  \partial_x u - (1+a)(1-\px^2)^{-1}  \partial_x u - (1-\px^2)^{-1} \partial_x (u(\eta-h)) \\
& \qquad \qquad +(1-\partial^2_x)^{-1}\left(-1+ a_1 \partial_x^2\right) \partial_t h \bigg)\\
&+\int\vp' \eta \bigg(c  \partial_x^2 \eta - (1+c)(1-\px^2)^{-1} \partial_x^2 \eta \\
&  \qquad \qquad - \frac12(1-\px^2)^{-1} \partial_x^2( u^2 )+ c_1(1-\partial^2_x)^{-1} \partial_t^2 \partial_x^2 h \bigg)\\
=: & ~J_1 +J_2.
\end{aligned}\]
We first deal with $J_1$. The integration by parts yields
\begin{equation}\label{eq:J1-1}
\begin{aligned}
J_1=& ~{} a \int \vp' ( \partial_x u)^2 - (1+a)\int\vp'  \partial_x u \nlop  \partial_x u\\
&~{}  - \int\vp'  \partial_x u \nlop  \partial_x (u (\eta + h))+ \int \vp'  \partial_x u (1-\partial^2_x)^{-1}\left(-1+ a_1 \partial_x^2\right) \partial_t h.
\end{aligned}
\end{equation}
We use the identity
\begin{equation}\label{eq:trick1}
\int \psi w \nlop  \partial_x ^2 z = -\int \psi  wz + \int \psi  w \nlop z
\end{equation}
for the second and fourth terms, in the right-hand side of \eqref{eq:J1-1}, to obtain
\[
\begin{aligned}
J_1=&~{}-(1+a)\int \vp' u^2 + a \int \vp' ( \partial_x u)^2\\
&+ (1+a)\int\vp' u \nlop u +(1+a)\int\vp''u\nlop  \partial_x u \\
&-\int\vp'u^2 (\eta+ h) + \int\vp'u\nlop(u (\eta + h)) \\
& +\int \vp''u\nlop  \partial_x (u( \eta+ h)) + \int \vp'  \partial_x u (1-\partial^2_x)^{-1}\left(-1+a_1 \partial_x^2\right) \partial_t h.
\end{aligned}
\]
Arranging terms,
\begin{equation}\label{eq:J1-2}
\begin{aligned}
J_1=&~{}-(1+a)\int \vp' u^2 + a \int \vp' ( \partial_x u )^2\\
&+ (1+a)\int\vp' u \nlop u +(1+a)\int\vp''u\nlop  \partial_x u \\
&-\int\vp'u^2\eta + \int\vp'u\nlop(u\eta) +\int \vp''u\nlop  \partial_x (u\eta)\\
& - \int\vp'u^2 h +  \int\vp'u\nlop(uh) + \int \vp''u\nlop  \partial_x (u h) \\
&+ \int \vp'  \partial_x u(1-\partial^2_x)^{-1}\left(-1+ {a}_1 \partial_x^2\right) \partial_t h.
\end{aligned}
\end{equation}
For $J_2$, using the integration by parts and \eqref{eq:trick1} with the identity
$
f\partial_x^2 f  = \frac12 \partial_x^2 (f^2) - (\partial_x f)^2,
$ 
 yields
\begin{equation}\label{eq:J2}
\begin{aligned}
J_2=&~ c\int \vp'\eta \partial_x^2 \eta - (1+c)\int \vp' \eta \nlop \partial_x^2 \eta \\
&-\frac12\int\vp'\eta \nlop \partial_x^2 (u^2) + {c}_1 \int\varphi' \eta (1-\partial^2_x)^{-1} \partial_t^2 \partial_x^2 h\\
=&~ (1+c)\int\vp'\eta^2 - c\int\vp' (\partial_x \eta)^2 + \frac{c}{2}\int\vp'''\eta^2\\
&-(1+c)\int\vp'\eta\nlop\eta\\
&+\frac12\int\vp'u^2\eta-\frac12\int\vp'\eta\nlop(u^2) +  {c}_1 \int\varphi'\eta (1-\partial^2_x)^{-1} \partial_t^2 \partial_x^2 h.
\end{aligned}
\end{equation}
By collecting all \eqref{eq:J1-2} and \eqref{eq:J2}, we have \eqref{eq:J-1}.
\end{proof}

Now we construct a global virial from a linear combination of $\mathcal I(t)$ and $\mathcal  J(t)$. Let $\alpha$ be a real number. We define the modified virial
\be\label{H}
\mathcal H(t):= \mathcal I(t) + \alpha \mathcal J(t).
\ee
Thanks to Lemmas \ref{Virial_bous} and \ref{lem:J}, we obtain the following direct consequence:

\begin{proposition}[Decomposition of $\frac{d}{dt}\mathcal H(t)$, \cite{KMPP}]\label{prop:general virial}
Let $u$ and $\eta$ satisfy \eqref{boussinesq} and $\mathcal H$ as in \eqref{H}. For any $\alpha\in \R$ and any $t \in \R$, we have the decomposition
\begin{equation}\label{eq:gvirial}
\frac{d}{dt}\mathcal H(t) = \mathcal Q(t) + \mathcal{SQ}(t) + \mathcal{NQ}(t) + \mathcal{NH}(t),
\end{equation}
where $\mathcal Q(t) =\mathcal Q[u,\eta](t) $ is the quadratic form
\begin{equation}\label{eq:leading}
\begin{aligned}
\mathcal Q(t) :=& ~\left(\left(1+ c \right)(\alpha-1) + \frac{1}2\right)\int \vp' \eta^2 -c \Big( \alpha + \frac12\Big)\int \vp' (\partial_x \eta)^2 \\
&+\left(\left(1+ a \right)( -\alpha-1) + \frac{1}2\right)\int \vp' u^2 +a \Big(\alpha-\frac12\Big)\int \vp' (\partial_x u)^2 \\
&+\left(1+ c \right)( - \alpha + 1)\int \vp' \eta \nlop \eta\\
&+\left(1+ a\right)(\alpha  +1)\int \vp' u \nlop u ,
\end{aligned}
\end{equation}
$ \mathcal{SQ}(t)$ represents lower order quadratic terms not included in $\mathcal Q(t)$:
\begin{equation}\label{eq:small linear}
\begin{aligned}
 \mathcal{SQ}(t) :=& ~ 
  \alpha\left(1+ a\right)\int\vp'' u \nlop  \partial_x u
+\frac{\alpha c}{2} \int \vp''' \eta^2 ,
\end{aligned}
\end{equation}
$\mathcal{NQ}(t)$ are truly cubic order terms or higher:
\begin{equation}\label{eq:nonlinear}
\begin{aligned}
 \mathcal{NQ}(t) :=& ~ - \frac12(  \alpha + 1)\int\vp' u^2\eta + \frac12( -\alpha + 1)\int \vp' \eta \nlop (u^2)\\
&+ (\alpha +1)\int \vp'  u \nlop (u\eta) + \alpha\int \vp'' u \nlop \partial_x (u\eta),
\end{aligned}
\end{equation}
and $\mathcal{NH}(t)$ are the new terms appearing from the influence of the bottom $h$ in the dynamics:
\begin{equation}\label{eq:H}
\begin{aligned}
 \mathcal{NH}(t) :=& ~ {}  - \frac12 \int  \partial_x (\varphi h) u^2  + (1+\alpha) \int \varphi' u (1-\partial_x^2)^{-1}\left( u h \right)  \\
 &~{}  - \alpha \int\vp'u^2 h 
  +  \alpha  \int \vp'' u\nlop \partial_x (u h) \\
&~ {} + (\alpha- 1) c_1 \int \varphi' \eta (1-\partial_x^2)^{-1}  \partial_t^2 \partial^2_x h  \\
&~{}  +  \int  \varphi' u(1-\partial^2_x)^{-1}(1- a_1\partial_x^2)\partial_t\partial_{x}h \\
&~{}  +   \alpha  \int\vp' \partial_x u (1-\partial^2_x)^{-1}\left(-1+ a_1 \partial_x^2\right) \partial_t h \\
&~{} + c_1  \int \varphi \eta \partial_t^2 \partial_x h + \int \varphi u (-1+  a_1 \partial_x^2)\partial_t h  . 
\end{aligned}
\end{equation}
\end{proposition}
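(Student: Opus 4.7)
The plan is to prove Proposition \ref{prop:general virial} by direct linear combination, since by definition $\mathcal H = \mathcal I + \alpha\mathcal J$ yields
\[
\frac{d}{dt}\mathcal H(t) \;=\; \frac{d}{dt}\mathcal I(t) \;+\; \alpha\,\frac{d}{dt}\mathcal J(t),
\]
and both right-hand expressions are already computed explicitly in Lemma \ref{Virial_bous} (identity \eqref{Virial0}) and Lemma \ref{lem:J} (identity \eqref{eq:J-1}). The proof therefore reduces to substituting those two identities and carefully sorting the resulting terms into the four families $\mathcal Q$, $\mathcal{SQ}$, $\mathcal{NQ}$, $\mathcal{NH}$ appearing in \eqref{eq:gvirial}. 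No new analytic estimate is needed at this stage.

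First I would collect the \emph{leading quadratic} terms, namely those weighted by $\varphi'$ alone and involving neither a factor of $h$ nor a cubic monomial. These are of six types: $\eta^2$, $(\partial_x\eta)^2$, $u^2$, $(\partial_x u)^2$, $\eta\,\nlop\eta$, $u\,\nlop u$. For each of them one simply adds the coefficient from \eqref{Virial0} to $\alpha$ times the coefficient from \eqref{eq:J-1}. For instance, for $\eta^2$ one gets $-(c+\tfrac12)+\alpha(1+c) = (1+c)(\alpha-1)+\tfrac12$, exactly the coefficient in \eqref{eq:leading}; the other five cases are identical and reproduce $\mathcal Q(t)$ as written. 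Then I would isolate the terms carrying higher derivatives of the weight $\varphi''$ or $\varphi'''$; in this setting they only come from $\alpha\,\tfrac{d}{dt}\mathcal J(t)$, giving precisely $\alpha(1+a)\int\varphi''\, u\nlop\partial_x u$ and $\tfrac{\alpha c}{2}\int\varphi'''\eta^2$, which together form $\mathcal{SQ}(t)$.

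Next I would collect the genuinely cubic, $h$-free contributions arising from the nonlinearities in \eqref{boussinesq}; after summing the pieces from both lemmas, the coefficients of $u^2\eta$, $\eta\nlop(u^2)$, $u\nlop(u\eta)$ and $u\nlop\partial_x(u\eta)$ become $-\tfrac12(\alpha+1)$, $\tfrac12(1-\alpha)$, $\alpha+1$ and $\alpha$ respectively, reproducing \eqref{eq:nonlinear}. Finally, every remaining term carries at least one factor of $h$ or a space/time derivative of $h$ and is grouped into $\mathcal{NH}(t)$. The only mild bookkeeping in this last family is to merge the two contributions $\int\varphi' u\nlop(uh)$ coming from $\mathcal I$ and $\alpha\mathcal J$ into a single term of coefficient $1+\alpha$, and similarly to combine the nonlocal forcing terms involving $\partial_t^2\partial_x^2 h$ into a coefficient $\alpha-1$, matching exactly \eqref{eq:H}.

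The main obstacle is not analytic but purely organizational: one must keep scrupulous track of signs and of whether the operator $\nlop$ acts before or after a derivative, because the same monomial (e.g.\ $\varphi' u\nlop(uh)$ vs.\ $\varphi'' u\nlop\partial_x(uh)$) appears from different integrations by parts in the two parent lemmas. Once these sign conventions are fixed consistently with the proofs of Lemmas \ref{Virial_bous} and \ref{lem:J}, the identity \eqref{eq:gvirial} follows term by term, and Proposition \ref{prop:general virial} is proved.
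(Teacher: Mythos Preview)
Your proposal is correct and is exactly the approach taken in the paper: the proposition is stated there as a direct consequence of Lemmas \ref{Virial_bous} and \ref{lem:J}, obtained by forming $\frac{d}{dt}\mathcal I(t)+\alpha\,\frac{d}{dt}\mathcal J(t)$ and sorting the resulting terms into the four families $\mathcal Q$, $\mathcal{SQ}$, $\mathcal{NQ}$, $\mathcal{NH}$. Your sample coefficient check and bookkeeping remarks are in line with what the paper implicitly does.
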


Proposition \ref{prop:general virial} decomposes the evolution of $abcd$ solutions in four different terms. Recall that the terms \eqref{eq:small linear} are usually absorbed by the terms in $\mathcal Q(t)$. The terms in \eqref{eq:nonlinear} are large unless one consider small data, and may lead to either the existence of solitary waves or blow up. Consequently, they must be taken with care. Finally, the new terms \eqref{eq:H} are different to the previous ones and have a strong influence in the dynamics. 

\subsection{Transformation into canonical variables}

We focus on the quadratic form $\mathcal{Q}(t)$ in \eqref{eq:leading}. We introduce canonical variables for $u$ and $\eta$ (see Definition \ref{Can_Var}) as follows:
\begin{equation}\label{eq:fg}
f := \nlop u \quad \mbox{and} \quad g := \nlop \eta.
\end{equation}
Note that, for $u, \eta \in H^1$, one has $f$ and $g$ in $H^3$. A direct calculation shows the following key relationships between $f$ and $u$ (resp. $g$ and $\eta$), see also Lemma \ref{lem:L2 comparable} for a similar statement. Using \eqref{eq:L2}-\eqref{eq:nonlocal}, we can rewrite the quadratic form $\mathcal{Q}(t)$ as follows:
\begin{lemma}\label{lem:leading}
Let $f$ and $g$ be canonical variables of $u$ and $\eta$ as in \eqref{eq:fg}. Consider the quadratic form $\mathcal Q(t)$ described in \eqref{eq:leading}. Then we have
\begin{equation}\label{eq:leading-1}
\begin{aligned}
\mathcal Q(t) =& \int \vp' \Big( A_1 f^2 + A_2 (\partial_x f )^2 + A_3 (\partial_x^2 f )^2 + A_4 (\partial_x^3 f)^2\Big)\\
&+\int \vp' \Big( B_1 g^2 + B_2 (\partial_x g )^2 + B_3 (\partial_x^2 g)^2 + B_4 (\partial_x^3 g)^2\Big)\\
&+\int \vp''' \Big(D_{11}f^2 + D_{12}(\partial_x f )^2 + D_{21}g^2 + D_{22}(\partial_x g )^2\Big),
\end{aligned}
\end{equation}
where
\[
A_1 = B_1 = \frac12>0,
\]
\[
A_2 =  - \alpha -\frac{3a}{2}, \qquad B_2 = \alpha -  - \frac{3c}{2},
\]
\[
A_3 = -(1-a) \alpha  -2a - \frac12, \qquad B_3 = (1-c) \alpha -2c - \frac12,
\]
\[
A_4= a\left(\alpha   - \frac12\right), \qquad B_4= - c\left(  \alpha + \frac12\right),
\]
and
\[
\begin{aligned}
&D_{11} = \frac12(1+a)(  \alpha  + 1) - \frac12 , \qquad D_{12}=-a\left(\alpha   - \frac12\right),\\
&D_{21}= \frac12(1+c)(1-\alpha  ) - \frac12,\qquad D_{22}= c\left(  \alpha + \frac12\right).
\end{aligned}
\]
\end{lemma}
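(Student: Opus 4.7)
The plan is to substitute the canonical-variable identities \eqref{eq:L2} and \eqref{eq:nonlocal} directly into \eqref{eq:leading}, and then to collect the resulting integrals by derivative order of $f$ and $g$.

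First, I observe that three $u$-quadratic expressions appear in $\mathcal Q(t)$: namely $\int\vp' u^2$, $\int\vp' (\partial_x u)^2$ and $\int\vp' u\nlop u$. The first and third are handled verbatim by \eqref{eq:L2} and \eqref{eq:nonlocal}. For the second, I would use the fact that $\nlop$ and $\partial_x$ commute, so that $\partial_x f$ is the canonical variable for $\partial_x u$. Applying \eqref{eq:L2} with $\partial_x u$ in place of $u$ then yields
\begin{equation*}
\int \vp'(\partial_x u)^2 \;=\; \int \vp'\!\left((\partial_x f)^2 + 2(\partial_x^2 f)^2 + (\partial_x^3 f)^2\right) - \int \vp'''(\partial_x f)^2 .
\end{equation*}
The $\eta$-sector is treated in exactly the same way, with $(u,f,a)$ replaced by $(\eta,g,c)$ throughout; this accounts for the fact that $A_j$ and $B_j$ share an identical structure modulo the sign differences visible in \eqref{eq:leading}.

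Next, I would substitute these six identities into \eqref{eq:leading} and regroup. Each coefficient $A_j$, $B_j$, $D_{ij}$ is then obtained by summing the appropriate prefactors. For instance, the coefficient $A_1$ receives contributions only from $\int\vp' u^2$ and from $\int\vp' u\nlop u$, and combining $(1+a)(-\alpha-1)+\tfrac12$ with $(1+a)(\alpha+1)$ produces the clean cancellation $A_1=\tfrac12$ (and symmetrically $B_1=\tfrac12$). The coefficient $A_2$ aggregates contributions from all three $u$-quadratics, $A_3$ picks up contributions from $\int\vp' u^2$ and $\int\vp'(\partial_x u)^2$, while $A_4$ arises solely from $\int\vp'(\partial_x u)^2$. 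The $\vp'''$-coefficients $D_{11}, D_{12}$ (and their $g$-analogues $D_{21},D_{22}$) come entirely from the ``correction'' integrals $-\int\vp''' f^2$ in \eqref{eq:L2} and $-\tfrac12\int\vp''' f^2$ in \eqref{eq:nonlocal}.

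The only real obstacle is algebraic bookkeeping: one must carefully propagate the six prefactors $(1+c)(\alpha-1)+\tfrac12$, $-c(\alpha+\tfrac12)$, $(1+a)(-\alpha-1)+\tfrac12$, $a(\alpha-\tfrac12)$, $(1+c)(1-\alpha)$ and $(1+a)(\alpha+1)$ through the substitutions and verify the several cancellations that yield the stated values. Since the substitution rules \eqref{eq:L2} and \eqref{eq:nonlocal} are already available, no further analytic input is required and the proof reduces to a direct, essentially mechanical computation.
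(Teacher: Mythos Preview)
Your proposal is correct and follows exactly the approach indicated in the paper: the sentence preceding the lemma states that one uses \eqref{eq:L2}--\eqref{eq:nonlocal} to rewrite $\mathcal Q(t)$, and the paper then simply cites \cite{KMPP,KM19} for the details. Your observation that $\partial_x f$ is the canonical variable for $\partial_x u$ (so that \eqref{eq:L2} applies verbatim to $\int\vp'(\partial_x u)^2$) is the only point requiring a moment's thought, and the rest is, as you say, mechanical bookkeeping.
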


Lemma \ref{lem:leading} was proved in \cite{KMPP,KM19}. The quadratic form \eqref{eq:leading-1} has different values depending on the sign of $\alpha.$ Precisely, the following lemma, proved in \cite{KM19}, shows that $\mathcal Q(t)$ is bounded below by the $H^1\times H^1$ norm of $(\eta,u)$.

\begin{lemma}[Refined positivity of the quadratic form $\mathcal{Q}(t)$]
Let $a, c <0$ satisfy Definition \ref{New Dis_Par}. Then, we have 
\be\label{Q_new_new}
\mathcal Q(t) \gtrsim \frac{1}{\lambda(t)}\int \sech^2\left( \frac{x}{\lambda(t)} \right)  \left(u^2 + \eta^2 + (\partial_x u)^2 + (\partial_x \eta)^2 \right).
\ee
\end{lemma}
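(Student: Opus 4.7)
My approach is to read $\mathcal Q(t)$ through the canonical-variable decomposition \eqref{eq:leading-1}, pick the weight $\vp$ so that $\vp'$ concentrates on a spatial window of length $\lambda(t)$, and then reduce the problem to showing that a single parameter $\alpha\in\R$ can be chosen so that every coefficient $A_i,B_i$ ($i=1,2,3,4$) is strictly positive.

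The plan is to take $\vp(x)=\tanh(x/\lambda(t))$, so that
\[
\vp'(x)=\frac{1}{\lambda(t)}\sech^2\!\Big(\frac{x}{\lambda(t)}\Big),\qquad |\vp'''(x)|\lesssim \frac{1}{\lambda(t)^3}\sech^2\!\Big(\frac{x}{\lambda(t)}\Big).
\]
The principal part of \eqref{eq:leading-1} is then
\[
\int\vp'\Big(A_1f^2+A_2(\partial_x f)^2+A_3(\partial_x^2 f)^2+A_4(\partial_x^3 f)^2\Big)
+\int\vp'\Big(B_1g^2+B_2(\partial_x g)^2+B_3(\partial_x^2 g)^2+B_4(\partial_x^3 g)^2\Big),
\]
and the $\vp'''$ part is a lower-order correction of size $O(\lambda(t)^{-3})$, which will be absorbed into the principal part of size $O(\lambda(t)^{-1})$ once $\lambda(t)$ is taken sufficiently large.

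The key step, and the main obstacle, is the algebraic claim that whenever $(a,b,c)$ satisfies Definition \ref{New Dis_Par}, one can choose $\alpha\in(-1/2,1/2)$ (to make $A_4,B_4>0$) such that simultaneously
\[
A_2=-\alpha-\tfrac{3a}{2}>0,\ \ B_2=\alpha-\tfrac{3c}{2}>0,\ \ A_3=-(1-a)\alpha-2a-\tfrac12>0,\ \ B_3=(1-c)\alpha-2c-\tfrac12>0.
\]
Each inequality defines an interval for $\alpha$, and the intersection is nonempty precisely under \eqref{dispersion_like_b}, \eqref{ref_dispersion_like2-1}, \eqref{ref_dispersion_like2-2}: this is exactly the case analysis carried out in \cite{KM19}, which I would invoke rather than redo. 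Once such $\alpha$ is fixed, both quadratic forms in $(f,\partial_x f,\partial_x^2 f,\partial_x^3 f)$ and $(g,\partial_x g,\partial_x^2 g,\partial_x^3 g)$ are pointwise coercive.

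Having established positivity of all $A_i,B_i$, I apply Lemma \ref{lem:L2 comparable} with $\phi=\vp'=\lambda(t)^{-1}\sech^2(x/\lambda(t))$ (which satisfies $|\phi''|\lesssim \lambda(t)^{-2}\phi\ll\phi$ for $\lambda(t)$ large): the estimate \eqref{eq:L2_est} gives
\[
\int\vp'\!\left(A_1 f^2+A_2(\partial_x f)^2+A_3(\partial_x^2 f)^2\right)\gtrsim \int\vp' u^2,
\]
and \eqref{eq:H1_est}, applied to the remaining $A_2,A_3,A_4$ contributions (split in constant proportions from those used above), gives
\[
\int\vp'\!\left(A_2(\partial_x f)^2+A_3(\partial_x^2 f)^2+A_4(\partial_x^3 f)^2\right)\gtrsim \int\vp'(\partial_x u)^2,
\]
and likewise for $g\leftrightarrow \eta$. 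Finally, the $\vp'''$ terms are controlled by the $\vp'$ terms thanks to the $\lambda(t)^{-2}$ factor, which is $\ll 1$ for $\lambda(t)$ large enough; the resulting lower bound
\[
\mathcal Q(t)\gtrsim \int\vp'\big(u^2+\eta^2+(\partial_x u)^2+(\partial_x\eta)^2\big)
\]
is exactly \eqref{Q_new_new}, completing the plan.
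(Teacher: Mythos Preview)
Your overall strategy---pass to canonical variables via Lemma~\ref{lem:leading}, choose $\vp'=\lambda(t)^{-1}\sech^2(x/\lambda(t))$, fix $\alpha$ to make the principal part coercive, absorb the $\vp'''$ terms, and return to $(u,\eta)$ through Lemma~\ref{lem:L2 comparable}---is exactly the route taken in the cited works and matches the paper's (tacit) proof. The paper itself simply invokes \cite{KM19,KM20} for this lemma, and your sketch is a faithful outline of that argument in the \emph{classical} regime \eqref{dispersion_like_b}.

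There is, however, a genuine gap in the algebraic core of your plan. You assert that under Definition~\ref{New Dis_Par} one can always choose $\alpha\in(-1/2,1/2)$ making \emph{all} of $A_2,A_3,B_2,B_3$ strictly positive. This is true under \eqref{dispersion_like_b}, but it is \emph{false} in part of the refined range \eqref{ref_dispersion_like2-1}--\eqref{ref_dispersion_like2-2}. A clean counterexample: take $a=c$ with $a\in(-\tfrac14,-\tfrac19)$. The third line of \eqref{ref_dispersion_like2-1} reduces to $27a^2>6a+1$, which holds for $a<-\tfrac19$, so such parameters are admissible. But the simultaneous conditions $A_3>0$ and $B_3>0$ read
\[
\frac{2a+\tfrac12}{1-a}<\alpha<\frac{-2a-\tfrac12}{1-a},
\]
and this interval is empty precisely when $a>-\tfrac14$. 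So no $\alpha$ makes every coefficient positive in that range, and your ``pointwise coercivity of all coefficients'' step collapses.

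What actually happens in \cite{KM20} in the refined regime is more delicate: one allows $A_3$ (or $B_3$) to be nonpositive and recovers coercivity by interpolating the bad $(\partial_x^2 f)^2$ term between the good $(\partial_x f)^2$ and $(\partial_x^3 f)^2$ terms via a weighted Gagliardo--Nirenberg--type inequality (with weight $\vp'$). The refined conditions \eqref{ref_dispersion_like2-1}--\eqref{ref_dispersion_like2-2} are precisely what make the resulting $3\times 3$ form in $(\partial_x f,\partial_x^2 f,\partial_x^3 f)$ positive definite after that interpolation, not the naive diagonal positivity you describe. If you want your sketch to cover the full Definition~\ref{New Dis_Par}, you must either (i) restrict to \eqref{dispersion_like_b}, where your argument is complete, or (ii) replace the ``all $A_i,B_i>0$'' step by the interpolation argument from \cite{KM20}.
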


Estimate \eqref{Q_new_new} must be confronted now with the influence of the bottom $h$, specified in the term $\mathcal{NH}(t)$ \eqref{eq:H}. 

Let $\varphi$ be a smooth bounded weight function such that  $\varphi'>0$ and $\la=\lambda(t)$ be the time-dependent weight given by
\begin{equation}\label{eq:lambda}
\lambda(t) := \frac{t}{\log t \log^2 (\log t)}, \quad t\geq 11,
\end{equation}
precisely corresponding to the upper limit of the space interval $I(t)$ \eqref{I(t)}. The following result is stated for times $t\geq 11$, but it can be easily stated and proved for corresponding negative times by correspondely changing \eqref{eq:lambda}.

\begin{lemma}
There exists $C>0$ such that one has
\begin{equation}\label{cota_NH}
\begin{aligned}
 \left|  \mathcal{NH}(t)  \right| 
& \le   (4\delta + C\varepsilon ) \int  \varphi'  u^2 +4 \delta  \int  \varphi'  (\partial_x u)^2  +4\delta \int \varphi' \eta^2 +4\delta \int \varphi' (\partial_x \eta)^2\\
&~{} \quad   + C_\delta \|  \partial_t   h  \|_{L^2}^2 + C_\delta \|  \partial_t \partial_x  h  \|_{L^2}^2  + C_\delta \|  \partial_t^2  h  \|_{L^2}^2 + C\| \partial_x h \|_{L^\infty}\\
&~{} \quad + C  \|\partial_t^2  h \|_{L^2} + C  \| \partial_x \partial_t h \|_{L^2} + \frac{C}{t^{3/2}}.
\end{aligned}
\end{equation}
\end{lemma}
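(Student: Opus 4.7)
The plan is to estimate each of the eight integrals constituting $\mathcal{NH}(t)$ in \eqref{eq:H} one at a time, relying on Young's inequality together with the smallness hypotheses \eqref{hyp_h} on the bottom $h$, the global bound \eqref{global} on $(u,\eta)$, and the auxiliary operator estimates from Section \ref{2} (in particular Lemma \ref{lem:est_IOp} and \eqref{eq:sech_Opg}--\eqref{eq:sech_Opg_1pp}). The eight terms fall naturally into two groups depending on whether the weight $\varphi$ carries a derivative or not.

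For the six terms involving a localized weight $\varphi'$ (or $\varphi''$), I would apply Cauchy--Schwarz and then $|ab|\le \delta a^2 + C_\delta b^2$. The $a^2$-factor always produces one of the quantities $\int \varphi'u^2$, $\int \varphi'(\partial_x u)^2$, $\int \varphi'\eta^2$, or $\int \varphi'(\partial_x\eta)^2$, which is absorbed into the $4\delta$ budget on the right-hand side of \eqref{cota_NH}. The $b^2$-factor is estimated by either (i) the Sobolev-type bound $\|h\|_{L^\infty}\lesssim \|h\|_{H^1}\lesssim \varepsilon$ when $h$ appears without a time derivative, contributing the $C\varepsilon$ coefficient on $\int\varphi' u^2$; or (ii) the squared $L^2$ norms of $\partial_t h$, $\partial_t\partial_x h$, $\partial_t^2 h$ when a time derivative is present, yielding the $C_\delta\|\partial_t h\|_{L^2}^2$, $C_\delta\|\partial_t\partial_x h\|_{L^2}^2$, $C_\delta\|\partial_t^2 h\|_{L^2}^2$ contributions. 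The sech-weighted commutator estimates \eqref{eq:sech_Opg}--\eqref{eq:sech_Opg_1pp} are used each time the operator $(1-\partial_x^2)^{-1}$ must be moved past the localized weight without losing powers of $\lambda(t)$. The term $-\tfrac12\int\partial_x(\varphi h)u^2$ deserves a separate treatment: expanding by Leibniz, the piece $-\tfrac12\int\varphi' h u^2$ is absorbed in the $\varepsilon\int\varphi' u^2$ budget, while $-\tfrac12\int\varphi\partial_x h\, u^2\lesssim \|\varphi\|_{L^\infty}\|\partial_x h\|_{L^\infty}\|u\|_{L^2}^2\lesssim \|\partial_x h\|_{L^\infty}$ yields precisely the $C\|\partial_x h\|_{L^\infty}$ remainder.

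The remaining two terms $c_1\int\varphi\,\eta\,\partial_t^2\partial_x h$ and $\int\varphi\, u\,(-1+a_1\partial_x^2)\partial_t h$ carry the undifferentiated, merely bounded weight $\varphi$. Here I would integrate by parts in $x$ to redistribute a derivative: each such term splits as a piece in which $\varphi$ is differentiated (treated by the local scheme above, and producing further contributions to the $\delta$-$C_\delta$ buckets) plus a piece of the form $\int \varphi\,(\partial_x\eta\text{ or }\partial_x u)\,(\partial_t^k h)$. The latter is bounded directly by Cauchy--Schwarz using $\|\varphi\|_{L^\infty}\le C$ together with the global smallness $\|\partial_x u\|_{L^2},\|\partial_x\eta\|_{L^2},\|u\|_{L^2},\|\eta\|_{L^2}\lesssim \varepsilon$ from \eqref{global}, producing the linear-in-$L^2$ contributions $C\|\partial_t^2 h\|_{L^2}$ and $C\|\partial_t\partial_x h\|_{L^2}$ at the cost of absorbing an $\varepsilon$ into the constant.

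The main bookkeeping obstacle is that each local norm in \eqref{cota_NH} appears with the explicit coefficient $4\delta$, so the $\delta$-splittings must be distributed across the eight source integrals so that no local norm collects more than four separate contributions. A secondary issue is that the interplay between the nonlocal operator $(1-\partial_x^2)^{-1}$, the higher derivatives of $\varphi$ (with $\|\varphi''\|_{L^\infty}\lesssim \lambda(t)^{-2}$), and the $\varepsilon$-smallness of $(u,\eta)$ generates sub-leading remainders that do not cleanly fit either the $\delta$-bucket or the $C_\delta\|\partial_t^k h\|_{L^2}^2$-bucket; these are collected into the residual $C/t^{3/2}$, whose rate reflects the explicit scale of $\lambda(t)$ in \eqref{eq:lambda} combined with the quadratic $H^1$-smallness of the solution, and is harmless for the time-integrated virial estimate used in Section \ref{7}.
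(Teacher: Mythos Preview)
Your proposal is correct and follows essentially the same route as the paper's own proof: a term-by-term treatment of the nine summands in \eqref{eq:H} (you miscounted as eight, but your discussion in fact covers all of them), with the same split between the $\varphi'/\varphi''$-weighted terms handled by Young's inequality plus \eqref{eq:sech_Opg}--\eqref{eq:sech_Opg_1pp}, the two $\varphi$-only terms handled by an integration by parts in $x$, and the single $\varphi''$ term $\alpha\int\varphi'' u\,(1-\partial_x^2)^{-1}\partial_x(uh)$ producing the residual $C/t^{3/2}$ via the extra powers of $\lambda(t)^{-1}$. The Leibniz expansion you propose for $-\tfrac12\int\partial_x(\varphi h)u^2$ is exactly what the paper does.
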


\begin{proof}
From \eqref{eq:H} and \eqref{hyp_h} we compute:  
\[
\begin{aligned}
 \left| \frac12 \int  \partial_x (\varphi h) u^2  \right| \lesssim & ~{} \left| \int  \varphi  \partial_x h u^2  \right|  +\left|  \int  \varphi' h u^2  \right|  \\
 \lesssim &~{} \left| \int  \varphi  \partial_x h u^2  \right|  + \varepsilon  \int  \varphi'  u^2\\
  \lesssim &~{} \| \partial_x h \|_{L^\infty}  + \varepsilon  \int  \varphi'  u^2.
\end{aligned}
\]
Similarly,
\[
\begin{aligned}
& \left|  \alpha \int\vp'u^2 h \right| \lesssim \varepsilon \int \varphi' u^2.
\end{aligned}
\]
Second, we use the bound $\| \varphi' (1-\partial_x^2)^{-1} f\|_{L^2} \lesssim \| \varphi' f\|_{L^2}$,
\[
\begin{aligned}
& \left|   \int \varphi' u (1-\partial_x^2)^{-1}\left( u h \right) \right| \\
& \quad \le \delta \int \varphi' u^2 + C_\delta \int  \varphi' |(1-\partial_x^2)^{-1}\left( u h \right)|^2\\
& \quad \le \delta \int \varphi' u^2 + C_\delta \int  \varphi'  ( u h )^2  \le  (\delta +C_\delta  \varepsilon )\int \varphi' u^2. 
\end{aligned}
\]
Now, using the value of $\lambda$ in \eqref{eq:lambda} along with \eqref{eq:sech_Opg} and \eqref{eq:sech_Opg_p},
\[
\begin{aligned}
&  \left|   \alpha  \int \vp''u\nlop \partial_x (u h)  \right| \\
& \lesssim \left|  \int \vp'''u\nlop uh \right| +\left|  \int \vp'' \partial_x u\nlop \partial_x (uh)  \right| \\
& \lesssim \frac1{\lambda^3} \int \vp' |u| |\nlop uh | + \frac1{\lambda^2(t)} \int \vp' |\partial_x u| |\nlop \partial_x (u h)| \\
& \lesssim \frac1{\lambda^3} \left( \int \vp' u^2 + \int \varphi'  u^2 h^2 \right) + \frac1{\lambda^2(t)}\left(  \int \vp'  (\partial_x u)^2 + \int \varphi' (u h)^2 \right)\\
& \lesssim \frac1{ t^{3/2}}.
\end{aligned}
\]
Similar to previous estimates, the bound $\|(1-\partial_x^2)^{-1} f\|_{L^2}\leq \|f\|_{L^2}$ (Lemma \ref{lem:est_IOp}), we get for $\delta>0$ small,
\[
\begin{aligned}
&   \left| \int \varphi' \eta (1-\partial_x^2)^{-1}  \partial_t^2 \partial^2_x h  \right|  \\
& \quad \lesssim    \left| \int \varphi' \eta   \partial_t^2  h  \right| +  \left| \int \varphi' \eta (1-\partial_x^2)^{-1}  \partial_t^2  h  \right| \le    \delta \int \varphi' \eta^2 + C_\delta \|  \partial_t^2  h  \|_{L^2}^2  . 
\end{aligned}
\]
Using again the bound $\|(1-\partial_x^2)^{-1} f\|_{L^2}\leq \|f\|_{L^2}$,
\[
\begin{aligned}
&   \left|  \int  \varphi' u(1-\partial^2_x)^{-1}(1- a_1\partial_x^2)\partial_t\partial_{x}h   \right|  \\
& \quad \lesssim  \left|  \int  \varphi' u(1-\partial^2_x)^{-1}\partial_t\partial_{x}h   \right|  +  \left|  \int  \varphi' u \partial_t\partial_{x}h   \right|  \\
& \quad \le    \delta \int \varphi' u^2 + C_\delta \|  \partial_t \partial_x  h  \|_{L^2}^2  . 
\end{aligned}
\]
Also, by \eqref{eq:sech_Opg_1pp}, we obtain
\[
\begin{aligned}
&  \left|   \alpha  \int\vp' \partial_x u  (1-\partial^2_x)^{-1}\left(-1+ a_1 \partial_x^2\right) \partial_t h \right|   \le   \delta \int \varphi' (\partial_x u)^2 + C_\delta \|  \partial_t  h  \|_{L^2}^2  .
\end{aligned}
\]
Finally,
\[
\begin{aligned}
& \left| c_1  \int \varphi \eta \partial_t^2 \partial_x h + \int \varphi u (-1+  a_1 \partial_x^2)\partial_t h \right| 
\\
& \quad \lesssim  \left|  \int  \partial_x(\varphi \eta) \partial_t^2 h \right| + \left| \int \varphi u \partial_t h \right|  +\left|  \int \partial_x(\varphi u)\partial_x \partial_t h \right| 
\\
& \quad \le  \delta \int \varphi' \eta^2 +\delta \int \varphi' u^2 + C_\delta  \|  \partial_t^2  h  \|_{L^2}^2 +C_\delta  \|  \partial_t\partial_x  h  \|_{L^2}^2 + C\| u\|_{L^2} \|\partial_t h \|_{L^2} 
\\
& \quad + C\| \partial_x \eta \|_{L^2} \|\partial_t^2  h \|_{L^2} + C\| \partial_x u \|_{L^2} \| \partial_x \partial_t h \|_{L^2}  . 
\end{aligned}.
\]
Gathering the previous estimates, we get \eqref{cota_NH}.
\end{proof}

Using \eqref{Q_new_new} and \eqref{cota_NH}, and the smallness of $\varepsilon,$ we obtain for some $C_0>0,$
\be\label{Q_new_new_new}
\begin{aligned}
\mathcal Q(t) \ge &~{}  \frac{1}{C_0\lambda(t)}\int \sech^2\left( \frac{x}{\lambda(t)} \right) \left(u^2 + \eta^2 + (\partial_x u)^2 + (\partial_x \eta)^2 \right)- \frac{C}{t^{3/2}} \\
&~{}  -  C_0 \left( \|  \partial_t   h  \|_{L^2}^2  +  \|  \partial_x \partial_t  h  \|_{L^2}^2    \right)\\
&~{}  -  C_0 \left(  \|  \partial_t^2  h  \|_{L^2}^2 + \| \partial_x h \|_{L^\infty} + \|\partial_t^2  h \|_{L^2} +\| \partial_x \partial_t h \|_{L^2}  \right) .
\end{aligned}
\ee

\begin{proposition}[Decay in time-dependent intervals]\label{prop:virial2}
Let $(a,b,c)$ be \emph{dispersion-like} parameters defined as in Definition \ref{New Dis_Par}. Let $(u,\eta)(t)$ be $H^1 \times H^1$ global solutions to \eqref{boussinesq} such that \eqref{Smallness} holds. Then, we have
\begin{equation}\label{eq:virial2-1}
\int_2^{\infty}\frac{1}{\lambda(t)}\int \sech^2 \left(\frac{x}{\lambda(t)}\right) \left(u^2 + (\px u)^2 + \eta^2 + (\px \eta)^2\right)(t,x) \, dx \,dt\lesssim 1.
\end{equation}
As an immediate consequence, there exists an increasing sequence of time $\{t_n\}$ $(t_n \to \infty$ as $n \to \infty)$ such that
\begin{equation}\label{eq:virial2-2}
\int \sech^2 \left(\frac{x}{\lambda(t_n)}\right) \left(u^2 + (\px u)^2 + \eta^2 + (\px \eta)^2\right)(t_n,x) \; dx \longrightarrow 0 \mbox{ as } n \to \infty.
\end{equation}
\end{proposition}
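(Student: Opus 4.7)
\smallskip

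\noindent\textbf{Proof plan.} The idea is to integrate the modified virial identity from Proposition \ref{prop:general virial} against a suitably scaled weight and absorb all forcing terms into the good part of the quadratic form. Choose $\alpha\in\mathbb R$ so that the refined dispersion-like hypothesis \eqref{ref_dispersion_like2-1}-\eqref{ref_dispersion_like2-2} yields the coercivity \eqref{Q_new_new}, and set
\[
\varphi(t,x):=\tanh\!\bigl(x/\lambda(t)\bigr),\qquad \varphi'(t,x)=\tfrac{1}{\lambda(t)}\operatorname{sech}^2\!\bigl(x/\lambda(t)\bigr),
\]
with $\lambda(t)$ from \eqref{eq:lambda}. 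Define $\mathcal H(t)$ as in \eqref{H} with this (now time-dependent) $\varphi$. Since $\|\varphi(t,\cdot)\|_{L^\infty}\le 1$ and by the global bound \eqref{global} we have $\sup_{t\geq 11}|\mathcal H(t)|\lesssim \varepsilon^2$. This uniform-in-time ceiling will control, after integration in $t$, the good term we wish to extract.

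First I would differentiate $\mathcal H(t)$ and account for the time-dependence of $\varphi$. The decomposition \eqref{eq:gvirial} is valid up to an additional correction of the form $\int\partial_t\varphi\,(u\eta+\partial_x u\,\partial_x\eta)+\alpha\!\int\partial_t\partial_x\varphi\,\eta\,\partial_x u$. Because $\partial_t\varphi=-\tfrac{x\lambda'(t)}{\lambda(t)^2}\operatorname{sech}^2(x/\lambda(t))$ and $\lambda'(t)/\lambda(t)=O(1/t)$ up to logarithms, this correction is pointwise dominated by a small multiple of $\varphi'$ for $t$ large, and thus can be absorbed by $\mathcal Q(t)$ after the passage to canonical variables (Lemma \ref{lem:leading}). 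The lower order quadratic piece $\mathcal{SQ}(t)$ \eqref{eq:small linear} involves $\varphi''$ and $\varphi'''$, which carry extra factors $\lambda(t)^{-2}$ and $\lambda(t)^{-3}$, so it is also subsumed into $\mathcal Q(t)$ via \eqref{eq:L2_est}-\eqref{eq:H1_est} together with \eqref{eq:sech_Opg}-\eqref{eq:sech_Opg_1pp}. The cubic term $\mathcal{NQ}(t)$ \eqref{eq:nonlinear} is absorbed using the $L^\infty$ smallness of $(u,\eta)$ coming from \eqref{global}, in conjunction with \eqref{eq:nonlinear1-1}-\eqref{eq:nonlinear1-4}. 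Finally, $\mathcal{NH}(t)$ is handled by \eqref{cota_NH} with $\delta>0$ small enough to be absorbed. The net outcome is precisely \eqref{Q_new_new_new}.

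Integrating \eqref{Q_new_new_new} from $11$ to $T$ yields
\[
\int_{11}^T\!\frac{1}{\lambda(t)}\!\int \operatorname{sech}^2\!\bigl(x/\lambda(t)\bigr)\bigl(u^2+\eta^2+(\partial_x u)^2+(\partial_x\eta)^2\bigr)\,dx\,dt \lesssim |\mathcal H(T)|+|\mathcal H(11)|+\int_{11}^T\! E(t)\,dt,
\]
where $E(t)$ collects $t^{-3/2}$ together with $\|\partial_t h\|_{L^2}^2$, $\|\partial_x\partial_t h\|_{L^2}^2$, $\|\partial_t^2 h\|_{L^2}^2$, $\|\partial_x h\|_{L^\infty}$, $\|\partial_t^2 h\|_{L^2}$ and $\|\partial_x\partial_t h\|_{L^2}$. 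The $L^1_t$-in-time integrability of the last four terms is immediate from \eqref{hyp_h}. For the squared terms I would use interpolation: the bound $\|h\|_{W^{2,\infty}_tH^1_x}\le C\varepsilon$ in \eqref{hyp_h} gives $\|\partial_t^k h\|_{L^\infty_tH^1_x}\le C\varepsilon$ for $k=1,2$, and together with $\|\partial_t^k h\|_{L^1_tH^1_x}\le C\varepsilon$ this yields $\|\partial_t^k h\|_{L^2_tH^1_x}^2\lesssim \varepsilon^2$. Hence $\int_{11}^\infty E(t)\,dt\lesssim 1$ and, letting $T\to\infty$, estimate \eqref{eq:virial2-1} follows.

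For \eqref{eq:virial2-2}, note that $\lambda(t)\sim t/\log t$ so $\int_{11}^\infty \lambda(t)^{-1}\,dt=\infty$. Since the integrand in \eqref{eq:virial2-1} is $\lambda(t)^{-1}$ times a nonnegative function of $t$, the usual non-integrability argument produces a sequence $t_n\to\infty$ along which the $\operatorname{sech}^2$-weighted integral tends to zero, giving \eqref{eq:virial2-2}. The main technical obstacle is the treatment of the time-dependence of $\varphi$: one must verify that the extra $\partial_t\varphi$ correction and the lower order $\mathcal{SQ}$, $\mathcal{NQ}$ pieces are genuinely small compared with the leading $\mathcal Q(t)$ uniformly in $t\geq 11$; this is the reason for the precise choice $\lambda(t)=t/(\log t\log^2\log t)$, which makes $\lambda'(t)$ decay just fast enough relative to $\lambda(t)^{-1}$ to preserve positivity while keeping the upper limit of $I(t)$ as large as possible.
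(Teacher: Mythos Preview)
Your overall architecture matches the paper's: same choice of weight $\varphi(t,x)=\tanh(x/\lambda(t))$, same use of the decomposition \eqref{eq:gvirial}, same absorption of $\mathcal{SQ}$, $\mathcal{NQ}$, $\mathcal{NH}$, and the same non-integrability argument for \eqref{eq:virial2-2}. The handling of the squared $h$-terms via interpolation $\|f\|_{L^2_t}^2\le\|f\|_{L^\infty_t}\|f\|_{L^1_t}$ is fine.

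However, there is a genuine gap in your treatment of the time-dependence correction $\int\partial_t\varphi\,(u\eta+\partial_xu\,\partial_x\eta)$. You claim this is ``pointwise dominated by a small multiple of $\varphi'$ for $t$ large'', but this is false: $\partial_t\varphi/\varphi' = -x\lambda'(t)/\lambda(t)$, which is unbounded in $x$ for every fixed $t$. So the correction cannot be absorbed directly into $\mathcal Q(t)$. Nor can one use the cruder bound $|\partial_t\varphi|\lesssim\lambda'/\lambda$ (from $\sup_y|y|\operatorname{sech}^2 y<\infty$) together with the global $H^1$ bound, because $\lambda'/\lambda\sim 1/t$ is not in $L^1$.

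The paper's fix is a Young-inequality split: write
\[
\Bigl|\tfrac{\lambda'}{\lambda}\!\int\!\tfrac{x}{\lambda}\operatorname{sech}^2\!\bigl(\tfrac{x}{\lambda}\bigr)(u\eta+\partial_xu\,\partial_x\eta)\Bigr|
\le \tfrac{1}{4\lambda}\!\int\!\operatorname{sech}^2\!\bigl(\tfrac{x}{\lambda}\bigr)(\eta^2+(\partial_x\eta)^2)
+ C\,\tfrac{\lambda'^2}{\lambda}\sup_{y}y^2\operatorname{sech}^2 y\!\int\!(u^2+(\partial_xu)^2).
\]
The first piece is absorbed by $\mathcal Q(t)$; the second uses the global bound \eqref{global} and produces the error $\lambda'^2(t)/\lambda(t)\le \bigl(t\log t\log^2(\log t)\bigr)^{-1}$, which \emph{is} integrable on $[11,\infty)$. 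This is exactly why the specific scale $\lambda(t)=t/(\log t\log^2\log t)$ is needed: it is the borderline choice making $\lambda'^2/\lambda$ integrable while keeping $1/\lambda$ non-integrable. Your closing remark about $\lambda'$ decaying ``just fast enough relative to $\lambda^{-1}$'' gestures at this, but the mechanism you describe (pointwise domination) is not the correct one.
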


\begin{proof}
The proof is standard, but we write it for the sake of completeness. We choose, in functionals $\mathcal{I}$ and $\mathcal{J}$ (see \eqref{I}-\eqref{J}),  the time-dependent weight
\begin{equation}\label{eq:vp}
\vp (t,x):= \tanh \left(\frac{x}{\lambda(t)}\right) \quad \mbox{and} \quad \partial_x\vp(t,x) := \frac{1}{\lambda(t)} \sech^2\left(\frac{x}{\lambda(t)}\right),
\end{equation}
with $\la(t)$ given by \eqref{eq:lambda}. Note that
\[
\begin{aligned}
\lambda'(t)= \frac1{\log t \log^2(\log t)} \left( 1 -\frac{1}{\log t}  -\frac{2}{\log t \log(\log t)} \right),
\end{aligned}
\]
then
\[
\frac{\lambda'(t)}{\lambda(t)} = \frac{1}{t}\left(1-\frac{1}{\log t} -\frac{2}{\log t \log(\log t)} \right) \not\in L^1[11,+\infty),
\]
and
\begin{equation}\label{eq:lambda2}
\frac{\lambda'^2(t)}{\lambda(t)} = \frac1{t \log t \log^2(\log t)} \left( 1 -\frac{1}{\log t}  -\frac{2}{\log t \log(\log t)} \right)^2 \le  \frac1{t \log t \log^2(\log t)}.
\end{equation}
From \eqref{eq:lambda2} one realizes that $\lambda'^2(t)/\lambda(t)$ is integrable in $[11,+\infty)$. Indeed, notice that 
\[
\left( \frac{1}{\log(\log t)}\right)'  = -\frac{1}{t \log t \log^2(\log t)}. 
\]
Then, we compute \eqref{eq:gvirial}, but in addition from \eqref{I} and \eqref{J} we will obtain from \eqref{eq:vp} the following three new terms:
\begin{equation}\label{eq:gvirial-I}
-\frac{\lambda'(t)}{\lambda(t)} \int \frac{x}{\lambda(t)}\sech^2\left( \frac{x}{\lambda(t)}\right) \left(u\eta + \px u \px \eta\right),
\end{equation}
and
\begin{equation}\label{eq:gvirial-J}
-\alpha\frac{\lambda'(t)}{\lambda^2(t)} \int\left(1- \frac{2x}{\lambda(t)}\tanh\left( \frac{x}{\lambda(t)}\right)\right)\sech^2\left( \frac{x}{\lambda(t)} \right) \eta \px u.
\end{equation}
Now we use a similar computation as the one performed in \cite{KMPP}, but $\lambda(t)$ now is slightly better. First, thanks to the global well-posedness, 
\[
\begin{aligned}
& \left|\frac{\lambda'(t)}{\lambda(t)} \int \frac{x}{\lambda(t)} \sech^2\left( \frac{x}{\lambda(t)}\right) \left(u\eta + \px u \px \eta\right) \right|
\\
& \lesssim \frac1{4\lambda(t)} \int \sech^2\left( \frac{x}{\lambda(t)}\right) (\eta^2 +(\partial_x \eta)^2 ) + \frac{C \lambda'^2(t)}{\lambda(t)} \sup_{y \in\mathbb R} y^2 \sech^2 y \int (u^2 +(\partial_x u)^2 ) \\
& \lesssim \frac1{4\lambda(t)} \int \sech^2\left( \frac{x}{\lambda(t)}\right) (\eta^2 +(\partial_x \eta)^2 ) + \frac{C \lambda'^2(t)}{\lambda(t)}\\
& \lesssim \frac1{4\lambda(t)} \int \sech^2\left( \frac{x}{\lambda(t)}\right) (\eta^2 +(\partial_x \eta)^2 ) + \frac{C}{t \log t \log^2(\log t)}.
\end{aligned}
\]
Also,
\[
\left| \alpha\frac{\lambda'(t)}{\lambda^2(t)} \int\left(1- \frac{2x}{\lambda(t)}\tanh\left( \frac{x}{\lambda(t)}\right)\right)\sech^2\left( \frac{x}{\lambda(t)} \right) \eta \px u \right| \lesssim \frac1{t^2} \int (\eta^2 + (\px u)^2).
\]
We conclude,
\begin{equation}\label{eq:virial2-3}
\begin{aligned}
|\eqref{eq:gvirial-I}| &+| \eqref{eq:gvirial-J} | \\
\le&~{} \frac{c_0}{\lambda(t)} \int \sech^2\Big( \frac{x}{\lambda(t)}\Big) \left(u^2 + (\px u)^2 + \eta^2 + (\px \eta)^2\right) + \frac{\wt{C}}{t \log t \log^2(\log t)},
\end{aligned}
\end{equation}
for a fixed constant $\wt{C} > 0$. Using this last estimate \eqref{eq:virial2-3} and \eqref{Q_new_new_new}, we obtain 
\begin{equation}\label{eq:virial2-3.1}
\begin{aligned}
& \frac{1}{\lambda(t)}\int \sech^2\Big(\frac{x}{\lambda(t)}\Big) (u^2 +\eta^2 +(\partial_x u)^2 +(\partial_x \eta)^2 ) \\
& \leq C \frac{d}{dt} \mathcal H(t) + \frac{C}{t \log t \log^2(\log t)} \\
& \quad +C \left( \|  \partial_t   h  \|_{L^2}^2  +  \|  \partial_t \partial_x  h  \|_{L^2}^2  + \|  \partial_t^2  h  \|_{L^2}^2 + \| \partial_x h \|_{L^\infty} \right).
\end{aligned}
\end{equation}
Since from \eqref{hyp_h} the left-hand side of \eqref{eq:virial2-3.1} is integrable in $t$ on $[11,\infty)$, but $\la^{-1}$ does not integrate on $[11,\infty)$, one concludes \eqref{eq:virial2-1} in Proposition \ref{prop:virial2}.
\end{proof}

\section{Energy estimates}\label{ENERGY}

To establish \eqref{Conclusion_0} it is necessary to prove that the local $H^1$ norms of $(u,\eta)(t)$ tends  to zero for all sequences $t_n\to +\infty$, rather than just a single sequence. Proving this result requires the use of a new energy estimate. A key element will be \eqref{coercividad}, which states that for small solutions, the $H^1\times H^1$ norm of $(u,\eta)(t)$ squared and the hamiltonian $H_h[u,\eta]$ in \eqref{Energy_new} are equivalent.

 Let $\psi = \psi(x)$ be a smooth, nonnegative and bounded function, to be chosen in the sequel. We consider the localized energy functional defined by
\begin{equation}\label{eq:local energy}
E_\text{loc}(t) = \frac12 \int \psi(t,x) \left( - a(\px u)^2 - c(\px \eta)^2 + u^2 + \eta^2 + u^2(\eta+h)\right)(t,x)dx.
\end{equation}
(Compare with \eqref{Energy_new}.) For the sake of simplicity, we introduce the following notations:
\be\label{ABFG}
\begin{aligned}
&A := a \partial_x^2u+ u +u(\eta+h), \hspace{2em} B: = c\partial_x^2 \eta + \eta + \frac12u^2,\\
&F := (1-\px^2)^{-1} A, \hspace{2.9em} G: = (1-\px^2)^{-1}B.
\end{aligned}
\ee
(In other words, $F$ and $G$ are the canonical variables of $A$ and $B$, respectively.) Notice that \eqref{boussinesq} now becomes
\begin{equation}\label{boussinesqABFG}
\begin{cases}
\partial_t \eta  + \partial_x F = (1-\px^2)^{-1} (-1 + a_1 \partial_x^2) \partial_th  , \\
\partial_t u  + \partial_xG= c_1  (1-\px^2)^{-1} \partial_t^2 \partial_x h.
\end{cases}
\end{equation}
We will also use canonical variables $f$ and $g$ for $u$ and $\eta$, and the following identities:
\begin{equation}\label{eq:E34-2}
\partial_x^3 f \partial_x g = \partial_x( \partial_x^2 f \partial_xg)  - \partial_x^2 f \partial_x^2 g \hspace{1em} \mbox{and} \hspace{1em} \partial_xf \partial_x^3 g = \partial_x (\partial_x f \partial_x^2 g) - \partial_x^2 f \partial_x^2 g,
\end{equation}
and
\be\label{Aux_00}
\partial_x^2 f g = \partial_x(\partial_x f g) - \partial_xf \partial_xg  \hspace{1em} \mbox{and} \hspace{1em} f \partial_x^2 g = \partial_x(f \partial_xg ) - \partial_x f \partial_xg.
\ee
We have

\begin{lemma}[Variation of local energy $E_\text{loc}$]\label{lem:energy1}
Let $u$ and $\eta$ satisfy \eqref{boussinesq}. Let $f$ and $g$ be canonical variables of $u$ and $\eta$ as in \eqref{eq:fg}. Then, the following holds:
\begin{enumerate}
\item Time derivative. We have in \eqref{eq:local energy},
\begin{equation}\label{eq:energy1}
\begin{aligned}
\frac{d}{dt} E_\emph{loc}(t) =&~{} \int \psi' fg + (1-2(a+c)) \int \psi' \partial_x f \partial_x g  \\
&+ (3ac-2(a+c))\int \psi'  \partial_x^2 f \partial_x^2 g + 3ac\int \psi'  \partial_x^3 f\partial_x^3 g\\
&+ \emph{SNL}_0(t) +\emph{SNL}_1(t)+\emph{SNL}_{h}(t) .
\end{aligned}
\end{equation}
\item The small nonlinear parts $\emph{SNL}_j(t)$ are given by
\begin{equation}\label{eq:SNL0}
\emph{SNL}_0(t) := \frac12 \int \partial_t \psi \left( - a(\px u)^2 - c(\px \eta)^2 + u^2 + \eta^2 + u^2(\eta+h)\right),
\end{equation}
and $\text{SNL}_1(t)$ is given in \eqref{eq:E1E2E3E4_new2}

\item The strong (and weak) interaction terms with the bottom $h$ are given by
\begin{equation}\label{eq:SNLh}
\begin{aligned}
\emph{SNL}_h(t) := & ~{}
 \frac12 \int \psi u^2 \partial_t h 
+c_1  \int \psi  F \partial_t^2\partial_x h +   \int \psi  G (-1+a_1\partial_x^2)\partial_t h 
\\&
-c_1  \int  (\psi'' F (1-\partial_x^2)^{-1}\partial_t^2\partial_x h +2\psi'  F (1-\partial_x^2)^{-1}\partial_t^2\partial_x^2 h  ) 
\\&
-  \int  \psi'' G (1-\partial_x^2)^{-1}(-1+a_1\partial_x^2)\partial_t h
\\&
-  2\int \psi'  G (1-\partial_x^2)^{-1}(-1+a_1\partial_x^2)\partial_x\partial_t h 
.
\end{aligned}
\end{equation}
\end{enumerate}
\end{lemma}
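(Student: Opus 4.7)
\medskip
\noindent\textbf{Proof plan for Lemma \ref{lem:energy1}.}
The plan is to differentiate $E_\text{loc}(t)$ directly, separate off the contribution coming from $\partial_t\psi$ (which yields $\text{SNL}_0(t)$), and then use the system \eqref{boussinesqABFG} to replace $\partial_t u$ and $\partial_t\eta$. All remaining bottom–dependent source terms, both those arising from the equations of motion and those produced by the nonlocal operator $(1-\partial_x^2)^{-1}$ acting on $\partial_t h$ and $\partial_t^2 h$, will be grouped into $\text{SNL}_h(t)$.

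First, I would compute
\begin{equation*}
\frac{d}{dt}E_\text{loc}(t)=\text{SNL}_0(t)+\int\psi\bigl(-a\,\partial_x u\,\partial_x\partial_t u-c\,\partial_x\eta\,\partial_x\partial_t\eta+u\partial_t u+\eta\partial_t\eta+u(\eta+h)\partial_t u+\tfrac12 u^2\partial_t\eta\bigr)+\tfrac12\!\int\psi u^2\partial_t h,
\end{equation*}
noting that the last term, $\tfrac12\int\psi u^2\partial_t h$, is precisely the first summand of $\text{SNL}_h(t)$ in \eqref{eq:SNLh}. Integrating by parts in the dispersive pieces to assemble the operators $A=a\partial_x^2 u+u+u(\eta+h)$ and $B=c\partial_x^2\eta+\eta+\tfrac12 u^2$ from \eqref{ABFG}, this becomes
\begin{equation*}
\frac{d}{dt}E_\text{loc}(t)=\text{SNL}_0(t)+\tfrac12\!\int\psi u^2\partial_t h+\int\psi\bigl(A\partial_t u+B\partial_t\eta\bigr)+\text{(boundary/commutator terms from }\psi'\text{)}.
\end{equation*}

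Next, I would substitute $\partial_t u=-\partial_x G+c_1(1-\partial_x^2)^{-1}\partial_t^2\partial_x h$ and $\partial_t\eta=-\partial_x F+(1-\partial_x^2)^{-1}(-1+a_1\partial_x^2)\partial_t h$ from \eqref{boussinesqABFG}. The source pieces produce, after integrating by parts in $x$ to move derivatives off of $(1-\partial_x^2)^{-1}\partial_t^k h$ and using $A=(1-\partial_x^2)F$, $B=(1-\partial_x^2)G$, exactly the remaining five terms in \eqref{eq:SNLh}; this is the step where one must be careful with the commutator $[\psi,(1-\partial_x^2)^{-1}]=-\psi''(1-\partial_x^2)^{-1}\cdots-2\psi'\partial_x(1-\partial_x^2)^{-1}\cdots$, which is what produces the $\psi''$ and $\psi'$ tails in the definition of $\text{SNL}_h$.

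The heart of the argument is the remaining ``dispersive'' integrals $-\int\psi(A\partial_x G+B\partial_x F)$. Using $A=(1-\partial_x^2)F$, $B=(1-\partial_x^2)G$, integration by parts in $x$ yields
\begin{equation*}
-\!\int\psi\bigl(A\partial_x G+B\partial_x F\bigr)=\int\psi'\bigl(FG+\partial_x F\partial_x G\bigr)+\text{terms where the $\psi$ coefficient is even in $\partial_x$},
\end{equation*}
and I would now expand $F,G$ back in terms of $f,g$ via $A=a\partial_x^2 u+u+u(\eta+h)$, $B=c\partial_x^2\eta+\eta+\tfrac12 u^2$ together with $u=(1-\partial_x^2)f$ and $\eta=(1-\partial_x^2)g$. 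Splitting $A=A_\text{lin}+A_\text{nl}$, $B=B_\text{lin}+B_\text{nl}$, the linear parts are polynomials in $(1-\partial_x^2)$ applied to $f$ and $g$, and a direct expansion shows
\begin{align*}
F_\text{lin}&=(1-\partial_x^2)^{-1}\bigl(a\partial_x^2 u+u\bigr)=(1-a)f+a\,\partial_x^2 f-a f,\\
G_\text{lin}&=(1-c)g+c\,\partial_x^2 g-c g,
\end{align*}
and the product $\int\psi'(F_\text{lin}G_\text{lin}+\partial_x F_\text{lin}\partial_x G_\text{lin})$ expands into the four quadratic integrals listed in \eqref{eq:energy1}. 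To match the target coefficients $1$, $1-2(a+c)$, $3ac-2(a+c)$, $3ac$, I would regroup terms using the identities \eqref{eq:E34-2} and \eqref{Aux_00} to shuffle integrations by parts between $\partial_x^2 f\partial_x^2 g$ and $\partial_x^3 f\partial_x^3 g$, absorbing the $\psi'''$ boundary contributions into $\text{SNL}_1(t)$. All terms involving the nonlinearities $u(\eta+h)$ in $A$ and $\tfrac12 u^2$ in $B$, together with the cubic remainder $\frac12\int\psi\bigl(u^2\partial_t\eta+2u(\eta+h)\partial_t u\bigr)$ contributions after substituting the equations, are collected into $\text{SNL}_1(t)$; the $h$-independent pieces of this collection are cubic, while the pieces containing $h$ (from $u(\eta+h)$) are placed either in $\text{SNL}_1$ or in $\text{SNL}_h$ as convenient for the later energy estimate.

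The main obstacle is the bookkeeping in the last paragraph: properly identifying the coefficients $1$, $1-2(a+c)$, $3ac-2(a+c)$, and $3ac$ requires several careful applications of \eqref{eq:E34-2}–\eqref{Aux_00} to shuttle derivatives between $f$ and $g$, and to separate the genuinely quadratic Hamiltonian-like structure from the remainder $\text{SNL}_1$. The handling of $\text{SNL}_h(t)$ is comparatively mechanical once the commutator of $\psi$ with $(1-\partial_x^2)^{-1}$ is written out; smallness of these pieces will come later from \eqref{hyp_h} and Lemma \ref{lem:est_IOp}.
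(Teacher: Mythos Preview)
Your overall approach matches the paper's, but there is a genuine gap in the middle. When you integrate by parts to assemble $A$ and $B$, the ``boundary/commutator terms from $\psi'$'' that you set aside are precisely $a\int\psi'\,\partial_x u\,\partial_t u + c\int\psi'\,\partial_x\eta\,\partial_t\eta$. These are \emph{not} remainder terms: after substituting $\partial_t u$ and $\partial_t\eta$ from \eqref{eq:abcd}, their leading contribution is $2ac\int\psi'\,\partial_x u\,\partial_x\eta$ plus nonlocal cross terms, and in canonical variables this produces, among other things, $2ac\int\psi'\,\partial_x^3 f\,\partial_x^3 g$ (see \eqref{eq:E34-1.1}--\eqref{eq:E34-3} in the paper). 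The piece $-\int\psi(A\,\partial_x G + B\,\partial_x F)=\int\psi'FG-\int\psi'\,\partial_x F\,\partial_x G$ (note the sign of the second term, which you have reversed) contributes only a single factor of $ac$ to the $\partial_x^3 f\,\partial_x^3 g$ coefficient. No amount of shuffling via \eqref{eq:E34-2}--\eqref{Aux_00} can convert that into $3ac$, because those identities only move derivatives between \emph{adjacent} orders (e.g.\ $\partial_x^3 f\,\partial_x g\leftrightarrow\partial_x^2 f\,\partial_x^2 g$), and cannot manufacture a $\partial_x^3 f\,\partial_x^3 g$ term out of lower-order ones without introducing fourth derivatives. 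The same mismatch shows up in the $\partial_x^2 f\,\partial_x^2 g$ and $\partial_x f\,\partial_x g$ coefficients.

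So the missing step is: after writing $\frac{d}{dt}E_{\text{loc}}=\text{SNL}_0+\int\psi(A\,\partial_t u+B\,\partial_t\eta)+a\int\psi'\partial_x u\,\partial_t u+c\int\psi'\partial_x\eta\,\partial_t\eta+\tfrac12\int\psi u^2\partial_t h$, you must substitute the equations into \emph{both} the $\psi$ integral and the two $\psi'$ integrals, and expand the latter in canonical variables as well. Only after summing all four pieces (the paper's $E_1,E_2,E_3,E_4$) do the coefficients $1,\ 1-2(a+c),\ 3ac-2(a+c),\ 3ac$ emerge. Your side computation of $F_{\text{lin}}$ is also off: since $u=(1-\partial_x^2)f$, one has directly $F_{\text{lin}}=(1-\partial_x^2)^{-1}(a\partial_x^2 u+u)=a\partial_x^2 f+f$, not the expression you wrote. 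The treatment of $\text{SNL}_h$ via the commutator $[\psi,(1-\partial_x^2)]$ is fine and matches the paper.
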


\begin{proof}
Now we  will compute the time derivative of $E_\text{loc}(t)$.
Applying the change of variables with the notations $A, B, F,$ and $G$, together with the definition of $\text{SNL}_0(t)$ in \eqref{eq:SNL0}, gives
\[
\begin{aligned}
& \frac{d}{dt} E_\text{loc}(t) 
\\
&~{} =\frac12 \int \partial_t \psi \left( - a(\px u)^2 - c(\px \eta)^2 + u^2 + \eta^2 + u^2(\eta+h)\right)\\
&~{} \quad + \int \psi \left(-a \partial_x  u \partial_x \partial_t u + u \partial_t u -c\partial_x \eta \partial_x \partial_t \eta + \eta \partial_t \eta +u(\eta +h) \partial_t u + \frac12 u^2  (\partial_t \eta +\partial_t h)\right)\\
&~{} = 
\frac12 \int \partial_t \psi \left( - a(\px u)^2 - c(\px \eta)^2 + u^2 + \eta^2 + u^2(\eta+h)\right)
\\&\qquad \quad+ \int \psi\left(\partial_t u A + \partial_t \eta B\right) + a\int \psi' \partial_t u  \partial_x  u + c\int \psi' \partial_t \eta \partial_x \eta +\frac12 \int \psi u^2 \partial_t h.
\end{aligned}
\]
Using \eqref{boussinesqABFG},
\[
\begin{aligned}
& \frac{d}{dt} E_\text{loc}(t) \\
& = 
\frac12 \int \partial_t \psi \left( - a(\px u)^2 - c(\px \eta)^2 + u^2 + \eta^2 + u^2(\eta+h)\right)
\\&~{}
 -\int \psi \left(\partial_x G  (F-\partial_x^2 F ) + \partial_x F  (G-\partial_x^2 G ) \right)
\\&~{}
+  \int \psi \left(  c_1(F-\partial_x^2 F ) (1-\partial_x^2)^{-1}\partial_t^2\partial_x h +   (G-\partial_x^2 G )(1-\partial_x^2)^{-1}(-1+a_1\partial_x^2)\partial_t h \right)
\\
&~{} + a\int \psi' \partial_t u  \partial_x  u + c\int \psi' \partial_t \eta \partial_x \eta +\frac12 \int \psi u^2 \partial_t h \\
& =
\frac12 \int \partial_t \psi \left( - a(\px u)^2 - c(\px \eta)^2 + u^2 + \eta^2 + u^2(\eta+h)\right)
\\&~{}
+\int \psi' FG - \int \psi'\partial_x F \partial_x G  + a\int \psi' \partial_t u \partial_x u + c\int \psi' \partial_t \eta  \partial_x \eta
~{}
+\frac12 \int \psi u^2 \partial_t h
\\&~{}
+  \int \psi \left(  c_1(F-\partial_x^2 F ) (1-\partial_x^2)^{-1}\partial_t^2\partial_x h +   (G-\partial_x^2 G )(1-\partial_x^2)^{-1}(-1+a_1\partial_x^2)\partial_t h \right)
\\
& =: \text{SNL}_0(t) + E_1+E_2+E_3+E_4 +
E_5+E_6
.
\end{aligned}
\]
We first deal with $E_1$ and $E_2$. From \eqref{ABFG} we have
\[
F = a\, \partial_x^2 f + f + (1-\px^2)^{-1}(u(\eta+h)) \quad \mbox{and} \quad G = c\, \partial_x^2 g + g + \frac12(1-\px^2)^{-1}(u^2).
\]
Using \eqref{eq:E34-2} and \eqref{Aux_00}, then a direct calculation  gives
\begin{equation}\label{eq:E1}
\begin{aligned}
& E_1=\int \psi' FG 
\\
&\quad  =  ac \int \psi' \partial_x^2 f \partial_x^2 g -(a+c) \int \psi'\partial_x f \partial_x g   + \int \psi'fg\\
&\qquad -a\int \psi''\partial_x f  g -c\int \psi'' f \partial_x g \\
&\qquad +\frac{a}{2} \int \psi'\partial_x^2 f  (1-\px^2)^{-1}(u^2) +\frac12\int \psi'f(1-\px^2)^{-1}(u^2)\\
&\qquad +c\int \psi'\partial_x^2 g(1-\px^2)^{-1}(u(\eta+h)) + \int \psi'g(1-\px^2)^{-1}(u(\eta+h)) \\
&\qquad +\frac12\int \psi'(1-\px^2)^{-1}(u(\eta+h))(1-\px^2)^{-1}(u^2).
\end{aligned}
\end{equation}
Similarly, 
\begin{equation}\label{eq:E2}
\begin{aligned}
& E_2=\int \psi' \partial_x F \partial_x G  \\
&\quad  = ac \int \psi'\partial_x^3 f\partial_x^3 g -(a+c) \int \psi'\partial_x^2 f  \partial_x^2 g  + \int \psi'\partial_x f \partial_x g \\
&\qquad -a\int \psi''\partial_x^2 f \partial_x g  -c\int \psi''\partial_x f \partial_x^2 g \\
&\qquad +\frac{a}{2} \int \psi'\partial_x^3 f (1-\px^2)^{-1}\partial_x (u^2) +\frac12\int \psi'\partial_x f (1-\px^2)^{-1}\partial_x (u^2)\\
&\qquad +c\int \psi'\partial_x^3 g(1-\px^2)^{-1}\partial_x (u(\eta+h)) + \int \psi'\partial_x g (1-\px^2)^{-1}\partial_x (u(\eta+h)) \\
&\qquad +\frac12\int \psi'(1-\px^2)^{-1}\partial_x (u(\eta+h)) (1-\px^2)^{-1}\partial_x (u^2).
\end{aligned}
\end{equation}
Now we focus on $E_3$ and $E_4$. Using \eqref{eq:abcd} and integration by parts,
\begin{equation}\label{eq:E3}
\begin{aligned}
E_3 =&~{} a\int \psi' \partial_t u \partial_x u\\
=& ~{}  a\int \psi'  \partial_xu \left( c \px \eta -(1+c)(1-\partial_x^2)^{-1}\px \eta \right)\\
& ~{} + a\int \psi'  \partial_xu \left( - \frac12(1-\partial_x^2)^{-1}\px(u^2) +c_1 (1-\partial^2_x)^{-1}  \partial_t^2 \partial_x h\right)\\
=& ~{} ac\int \psi' \partial_xu\partial_x\eta - a(c+1)\int \psi' \partial_x u (1-\px^2)^{-1}\partial_x\eta\\
&+\frac12 a\int \partial_x(\psi' \partial_xu)(1-\px^2)^{-1}(u^2) + ac_1 \int \psi' \partial_xu  (1-\partial^2_x)^{-1}  \partial_t^2 \partial_x h.
\end{aligned}
\end{equation}
Following a similar approach,
\begin{equation}\label{eq:E4}
\begin{aligned}
E_4 =&~{} c\int \psi' \partial_t \eta \partial_x \eta \\
=&~{} c\int \psi'   \partial_x\eta \left( a \px u -(1+a)(1-\partial_x^2)^{-1}\px u \right) \\
&~{} + c\int \psi'   \partial_x\eta \left(  - (1-\partial_x^2)^{-1}\px(u(\eta+h))  + (1-\partial^2_x)^{-1}\left(-1+ a_1 \partial_x^2\right) \partial_t h\right) \\
=& ~{} ac\int \psi' \partial_x\eta \partial_x u - c(a+1) \int \psi' \partial_x \eta (1-\px^2)^{-1} \partial_x u\\
&~{}+ c\int \partial_x(\psi' \partial_x\eta)(1-\px^2)^{-1}(u(\eta+h))  \\
&~{} + c\int \psi'   \partial_x \eta (1-\partial^2_x)^{-1}\left(-1+ a_1 \partial_x^2\right) \partial_t h .
\end{aligned}
\end{equation}
Therefore, considering \eqref{eq:E3}-\eqref{eq:E4} and adding $E_3$ and $E_4$, we get
\begin{equation}\label{eq:E3E4}
\begin{aligned}
E_3+ E_4 =& ~{} 2ac\int \psi' \partial_x\eta \partial_x u\\
&~{} - a(c+1)\int \psi' \partial_x u (1-\px^2)^{-1}\partial_x\eta - c(a+1) \int \psi' \partial_x \eta (1-\px^2)^{-1} \partial_x u\\
&~{} +\frac12 a\int \partial_x(\psi' \partial_xu)(1-\px^2)^{-1}(u^2) \\
&~{}+ c\int \partial_x(\psi' \partial_x\eta)(1-\px^2)^{-1}(u(\eta+h))  \\
&~{} + ac_1 \int \psi' \partial_xu  (1-\partial^2_x)^{-1}  \partial_t^2 \partial_x h\\
&~{} + c\int \psi'   \partial_x \eta (1-\partial^2_x)^{-1}\left(-1+ a_1 \partial_x^2\right) \partial_t h .
\end{aligned}
\end{equation}
We use now canonical variables $f$ and $g$ for $u$ and $\eta$, and the identities \eqref{eq:E34-2}. We get after integration by parts,
\begin{equation}\label{eq:E34-1.1}
\begin{aligned}
& 2ac  \int \psi' \partial_x u \partial_x \eta  \\
&~ {}\quad= 2ac\int\psi'(\partial_x f \partial_x g  + \partial_x^3 f\partial_x^3 g)  -2ac\int\psi'(\partial_x^3 f\partial_x g  + \partial_x f \partial_x^3 g) \\
&~ {}\quad=2ac\int\psi'(\partial_x f \partial_x g  + \partial_x^3 f\partial_x^3 g)   + 4ac\int\psi' \partial_x^2 f\partial_x^2 g \\
&~ {}\quad\quad + 2ac\int \psi'' ( \partial_x^2 f \partial_xg +\partial_x f \partial_x^2 g).
\end{aligned}
\end{equation}
Similarly,
\begin{equation}\label{eq:E34-1.2}
\begin{aligned}
&- a(c+1)\int \psi' \partial_x u (1-\px^2)^{-1}\partial_x \eta
\\
&~{} \quad = -a(c+1)\int\psi'\partial_x f \partial_x g   +a(c+1)\int\psi'\partial_x^3 f\partial_x g \\
&~{} \quad = -a(c+1)\int\psi'\partial_x f \partial_x g  - a(c+1)\int\psi'\partial_x^2 f\partial_x^2 g - a(c+1)\int\psi''\partial_x^2 f\partial_x g.
\end{aligned}
\end{equation}
Finally,
\begin{equation}\label{eq:E34-1.3}
\begin{aligned}
& - c(a+1) \int \psi' \partial_x \eta(1-\px^2)^{-1}\partial_x u 
\\
&~ {} \quad= -c(a+1)\int\psi'\partial_x f \partial_x g  +c(a+1)\int\psi'\partial_x f \partial_x^3 g \\
&~ {} \quad= -c(a+1)\int\psi'\partial_x f \partial_x g -c(a+1)\int\psi'\partial_x^2 f \partial_x^2 g-c(a+1)\int\psi''\partial_x f \partial_x^2 g.
\end{aligned}
\end{equation}
Replacing \eqref{eq:E34-1.1}-\eqref{eq:E34-1.2}-\eqref{eq:E34-1.3} in \eqref{eq:E3E4},
\begin{equation}\label{eq:E34-3}
\begin{aligned}
E_3+ E_4 =&~{} -(a+c)\int \psi'\partial_x f \partial_x g  + (2ac-(a+c))\int\psi'\partial_x^2 f \partial_x^2 g \\
&~{}+ 2ac\int\psi'\partial_x^3 f\partial_x^3 g\\
&~{}+a(c-1)\int\psi''\partial_x^2 f \partial_x g +c(a-1)\int\psi''\partial_x f \partial_x^2 g\\
&~{} +\frac12 a\int \partial_x(\psi' \partial_xu)(1-\px^2)^{-1}(u^2) \\
&~{}+ c\int \partial_x(\psi' \partial_x\eta)(1-\px^2)^{-1}(u(\eta+h))  \\
&~{} + ac_1 \int \psi' \partial_xu  (1-\partial^2_x)^{-1}  \partial_t^2 \partial_x h\\
&~{} + c\int \psi'   \partial_x \eta (1-\partial^2_x)^{-1}\left(-1+ a_1 \partial_x^2\right) \partial_t h .
\end{aligned}
\end{equation}
Now, from \eqref{eq:E1}, \eqref{eq:E2} and  \eqref{eq:E34-3}, we get
\begin{equation}\label{eq:E1E2E3E4}
\begin{aligned}
& E_1+E_2+E_3+E_4\\
&~{}  =  ac \int \psi' \partial_x^2 f \partial_x^2 g -(a+c) \int \psi'\partial_x f \partial_x g   + \int \psi'fg\\
&\qquad -a\int \psi''\partial_x f  g -c\int \psi'' f \partial_x g \\
&\qquad +\frac{a}{2} \int \psi'\partial_x^2 f  (1-\px^2)^{-1}(u^2) +\frac12\int \psi'f(1-\px^2)^{-1}(u^2)\\
&\qquad +c\int \psi'\partial_x^2 g(1-\px^2)^{-1}(u(\eta+h)) + \int \psi'g(1-\px^2)^{-1}(u(\eta+h)) \\
&\qquad +\frac12\int \psi'(1-\px^2)^{-1}(u(\eta+h))(1-\px^2)^{-1}(u^2)\\
&\qquad + ac \int \psi'\partial_x^3 f\partial_x^3 g -(a+c) \int \psi'\partial_x^2 f  \partial_x^2 g  + \int \psi'\partial_x f \partial_x g \\
&\qquad -a\int \psi''\partial_x^2 f \partial_x g  -c\int \psi''\partial_x f \partial_x^2 g \\
&\qquad +\frac{a}{2} \int \psi'\partial_x^3 f (1-\px^2)^{-1}\partial_x (u^2) +\frac12\int \psi'\partial_x f (1-\px^2)^{-1}\partial_x (u^2)\\
&\qquad +c\int \psi'\partial_x^3 g(1-\px^2)^{-1}\partial_x (u(\eta+h)) + \int \psi'\partial_x g (1-\px^2)^{-1}\partial_x (u(\eta+h)) \\
&\qquad +\frac12\int \psi'(1-\px^2)^{-1}\partial_x (u(\eta+h)) (1-\px^2)^{-1}\partial_x (u^2)\\
&\qquad  -(a+c)\int \psi'\partial_x f \partial_x g  + (2ac-(a+c))\int\psi'\partial_x^2 f \partial_x^2 g \\
&\qquad + 2ac\int\psi'\partial_x^3 f\partial_x^3 g+a(c-1)\int\psi''\partial_x^2 f \partial_x g +c(a-1)\int\psi''\partial_x f \partial_x^2 g\\
&\qquad  +\frac12 a\int \partial_x(\psi' \partial_xu)(1-\px^2)^{-1}(u^2)   + c\int \partial_x(\psi' \partial_x\eta)(1-\px^2)^{-1}(u(\eta+h))  \\
&\qquad  + ac_1 \int \psi' \partial_xu  (1-\partial^2_x)^{-1}  \partial_t^2 \partial_x h  + c\int \psi'   \partial_x \eta (1-\partial^2_x)^{-1}\left(-1+ a_1 \partial_x^2\right) \partial_t h .
\end{aligned}
\end{equation}
After some simplifications, \eqref{eq:E1E2E3E4} becomes
\begin{equation}\label{eq:E1E2E3E4_new}
\begin{aligned}
& E_1+E_2+E_3+E_4\\
&~{}  =  (3ac -2(a+c))\int \psi' \partial_x^2 f \partial_x^2 g +(1-2(a+c)) \int \psi'\partial_x f \partial_x g   + \int \psi'fg\\
&\qquad + 3ac \int \psi'\partial_x^3 f\partial_x^3 g    +  \text{SNL}_1(t),
\end{aligned}
\end{equation}
where
\begin{equation}\label{eq:E1E2E3E4_new2}
\begin{aligned}
& \text{SNL}_1(t)\\
&~{}  := a(c-1)\int\psi''\partial_x^2 f \partial_x g +c(a-1)\int\psi''\partial_x f \partial_x^2 g\\
&\qquad -a\int \psi''\partial_x f  g -c\int \psi'' f \partial_x g  -a\int \psi''\partial_x^2 f \partial_x g  -c\int \psi''\partial_x f \partial_x^2 g \\
&\qquad +\frac{a}{2} \int \psi'\partial_x^2 f  (1-\px^2)^{-1}(u^2) +\frac12\int \psi'f(1-\px^2)^{-1}(u^2)\\
&\qquad +c\int \psi'\partial_x^2 g(1-\px^2)^{-1}(u(\eta+h)) + \int \psi'g(1-\px^2)^{-1}(u(\eta+h)) \\
&\qquad +\frac12\int \psi'(1-\px^2)^{-1}(u(\eta+h))(1-\px^2)^{-1}(u^2)\\
&\qquad +\frac{a}{2} \int \psi'\partial_x^3 f (1-\px^2)^{-1}\partial_x (u^2) +\frac12\int \psi'\partial_x f (1-\px^2)^{-1}\partial_x (u^2)\\
&\qquad +c\int \psi'\partial_x^3 g(1-\px^2)^{-1}\partial_x (u(\eta+h)) + \int \psi'\partial_x g (1-\px^2)^{-1}\partial_x (u(\eta+h)) \\
&\qquad +\frac12\int \psi'(1-\px^2)^{-1}\partial_x (u(\eta+h)) (1-\px^2)^{-1}\partial_x (u^2)\\
&\qquad  +\frac12 a\int \partial_x(\psi' \partial_xu)(1-\px^2)^{-1}(u^2)   + c\int \partial_x(\psi' \partial_x\eta)(1-\px^2)^{-1}(u(\eta+h))  \\
&\qquad  + ac_1 \int \psi' \partial_xu  (1-\partial^2_x)^{-1}  \partial_t^2 \partial_x h  + c\int \psi'   \partial_x \eta (1-\partial^2_x)^{-1}\left(-1+ a_1 \partial_x^2\right) \partial_t h .
\end{aligned}
\end{equation}
Now, let us focus on $E_6$. Integrating by parts and rearranging the terms, we have
\[
\begin{aligned}
E_6
=&~{} c_1  \int \psi  F (1-\partial_x^2)^{-1}\partial_t^2\partial_x h 
-c_1  \int F \partial_x^2(\psi  (1-\partial_x^2)^{-1}\partial_t^2\partial_x h ) 
\\&
+   \int \psi  G (1-\partial_x^2)^{-1}(-1+a_1\partial_x^2)\partial_t h 
-  \int G \partial_x^2(\psi  (1-\partial_x^2)^{-1}(-1+a_1\partial_x^2)\partial_t h )
\\
=&~{} c_1  \int \psi  F \partial_t^2\partial_x h +   \int \psi  G (-1+a_1\partial_x^2)\partial_t h 
\\&
-c_1  \int  (\psi'' F (1-\partial_x^2)^{-1}\partial_t^2\partial_x h +2\psi'  F (1-\partial_x^2)^{-1}\partial_t^2\partial_x^2 h  ) 
\\&
-  \int  (\psi'' G (1-\partial_x^2)^{-1}(-1+a_1\partial_x^2)\partial_t h +2\psi'  G (1-\partial_x^2)^{-1}(-1+a_1\partial_x^2)\partial_x\partial_t h )
\end{aligned}
\]
Letting $\text{SNL}_{h}:=E_5+E_6$, we obtain
\begin{equation}\label{E56}
\begin{aligned}
\mbox{SNL}_{h}(t):=&~{} \frac12 \int \psi u^2 \partial_t h 
+c_1  \int \psi  F \partial_t^2\partial_x h +   \int \psi  G (-1+a_1\partial_x^2)\partial_t h 
\\&
-c_1  \int  (\psi'' F (1-\partial_x^2)^{-1}\partial_t^2\partial_x h +2\psi'  F (1-\partial_x^2)^{-1}\partial_t^2\partial_x^2 h  ) 
\\&
-  \int  \psi'' G (1-\partial_x^2)^{-1}(-1+a_1\partial_x^2)\partial_t h 
\\&
-2  \int  \psi'  G (1-\partial_x^2)^{-1}(-1+a_1\partial_x^2)\partial_x\partial_t h 
.
\end{aligned}
\end{equation}
Thus, from \eqref{eq:E1E2E3E4_new}, \eqref{eq:E1E2E3E4_new2}  and \eqref{E56} we have proved \eqref{eq:energy1}. This ends the proof of Lemma \ref{lem:energy1}. 
\end{proof}

\section{Proof of the Theorem \ref{MT1}}\label{7}

Now we finally prove Theorem \ref{MT1}. Let $\la(t)$ be given by \eqref{eq:lambda}. First, we take in \eqref{eq:energy1}:
\be\label{psi_psi}
\psi (t,x):= \sech^4 \left(\frac{x}{\lambda(t)}\right).
\ee
Note that \eqref{psi_psi} satisfies 
\begin{equation}\label{eq:weight3}
|\partial_x \psi(t,x)| \le \frac{4}{\lambda(t)}\psi (t,x) \qquad \mbox{and} \qquad |\partial_x^2 \psi(t,x)| \le \frac{20}{\lambda(t)} |\partial_x \psi(t,x)|.
\end{equation}

\begin{proposition}\label{prop:energy2}
Let $(a,b,c)$ be parameters defined as in Definition \ref{New Dis_Par}. Let $(u,\eta)(t)$ be a small $H^1 \times H^1$ global solution to \eqref{boussinesq} such that \eqref{Smallness} holds. Then, we have
\begin{equation}\label{eq:energy2.1}
\lim_{t \to \infty} \int \sech^4 \left(\frac{x}{\lambda(t)}\right) \left(u^2 + (\px u)^2 + \eta^2 + (\px \eta)^2\right)(t,x) \; dx = 0.
\end{equation}
\end{proposition}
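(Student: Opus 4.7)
The strategy is to use the localized energy functional $E_{\text{loc}}(t)$ of \eqref{eq:local energy} with the weight $\psi(t,x)=\sech^4(x/\lambda(t))$ fixed in \eqref{psi_psi} in order to upgrade the subsequential decay \eqref{eq:virial2-2} into a true limit. Because $-a,-c>0$ by Definition \ref{New Dis_Par}, and $u,\eta,h$ are uniformly small in $L^\infty$ thanks to \eqref{global} and \eqref{hyp_h}, the cubic piece $\tfrac12 u^2(\eta+h)$ in \eqref{eq:local energy} is absorbed by the leading quadratic terms, yielding the coercivity
\[
E_{\text{loc}}(t) \sim \int \sech^4\!\Big(\tfrac{x}{\lambda(t)}\Big)\bigl(u^2+(\partial_x u)^2+\eta^2+(\partial_x\eta)^2\bigr)(t,x)\,dx.
\]
Hence \eqref{eq:energy2.1} reduces to showing $E_{\text{loc}}(t)\to 0$ as $t\to\infty$. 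Using $\sech^4\leq \sech^2$ pointwise, the sequence $\{t_n\}$ produced by \eqref{eq:virial2-2} already satisfies $E_{\text{loc}}(t_n)\to 0$.

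It therefore suffices to prove $\frac{d}{dt}E_{\text{loc}}\in L^1([2,\infty))$, since in that case $E_{\text{loc}}(t)$ is Cauchy at infinity and its limit, being zero along $\{t_n\}$, is zero. I would use the decomposition of Lemma \ref{lem:energy1}. For the leading quadratic pieces $\int\psi'(\partial_x^k f)(\partial_x^k g)$, $k=0,1,2,3$, the bound $|\psi'|\leq (4/\lambda(t))\psi$ from \eqref{eq:weight3}, Cauchy--Schwarz and the canonical-variable equivalences of Lemma \ref{lem:L2 comparable} yield control by a multiple of
\[
\tfrac{1}{\lambda(t)}\int \sech^2\!\Big(\tfrac{x}{\lambda(t)}\Big)\bigl(u^2+(\partial_x u)^2+\eta^2+(\partial_x\eta)^2\bigr),
\]
which is integrable in time by Proposition \ref{prop:virial2}. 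For $\text{SNL}_0$ in \eqref{eq:SNL0}, a direct computation gives $|\partial_t\psi|\leq (4|x|\lambda'(t)/\lambda(t)^2)\sech^4(x/\lambda(t))$; since $|y|\sech(y)\lesssim 1$, this is dominated by $(\lambda'(t)/\lambda(t))\sech^2(x/\lambda(t))$, and the elementary inequality $\lambda'(t)\lesssim 1$ (consequence of \eqref{eq:lambda2}) shows $\lambda'(t)/\lambda(t)\lesssim 1/\lambda(t)$, placing this term in the same integrable bound. The cubic remainder $\text{SNL}_1$ of \eqref{eq:E1E2E3E4_new2} is treated term by term via Cauchy--Schwarz combined with the smallness \eqref{global} and the $\sech$-commutator estimates \eqref{eq:sech_Opg}--\eqref{eq:sech_Opg_1pp} for $(1-\partial_x^2)^{-1}$; the extra factor of $\|(u,\eta)(t)\|_{H^1\times H^1}\lesssim \varepsilon$ absorbs the quadratic density into the same $\tfrac{1}{\lambda(t)}\int \sech^2(\cdot)$ scale. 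Finally, $\text{SNL}_h$ of \eqref{eq:SNLh} is integrated directly in time by Cauchy--Schwarz in $x$, the uniform bound \eqref{global}, and the $L^1_t$ integrability of $\partial_t h,\partial_t^2 h,\partial_x\partial_t h$ in $H^1_x$ from \eqref{hyp_h}.

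Gathering all contributions yields $\int_2^\infty|\frac{d}{dt}E_{\text{loc}}(t)|\,dt<\infty$, which together with $E_{\text{loc}}(t_n)\to 0$ and the coercivity of Step~1 concludes the proof of \eqref{eq:energy2.1}. The main technical obstacle is the term $\text{SNL}_1$ in \eqref{eq:E1E2E3E4_new2}: it is a long list of mixed expressions in which a weight derivative $\psi'$ or $\psi''$ is paired with quadratic nonlinearities acted on by $(1-\partial_x^2)^{-1}$, sometimes followed by an additional $\partial_x$. The $\sech$ weight has to be properly commuted past the nonlocal operator via \eqref{eq:sech_Opg}--\eqref{eq:sech_Opg_1pp} before applying Cauchy--Schwarz, so that the resulting local quadratic density is once again of the form $\frac{1}{\lambda(t)}\int \sech^2(\cdot)(\ldots)$ absorbable by Proposition \ref{prop:virial2}. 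No new ingredient beyond careful bookkeeping should be required.
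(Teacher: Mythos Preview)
Your proposal is correct and follows essentially the same route as the paper: both arguments use Lemma \ref{lem:energy1} with the weight \eqref{psi_psi}, establish the coercivity of $E_{\text{loc}}$, bound each piece of \eqref{eq:energy1} by a quantity integrable in time (via Proposition \ref{prop:virial2} for the $(u,\eta)$-terms and via \eqref{hyp_h} for the $h$-terms), and conclude by combining $\tfrac{d}{dt}E_{\text{loc}}\in L^1$ with $E_{\text{loc}}(t_n)\to 0$. The only cosmetic difference is that the paper cites the nonlinear estimates \eqref{eq:nonlinear1-1}--\eqref{eq:nonlinear1-4} for $\text{SNL}_1$ where you invoke the $\sech$-commutator bounds \eqref{eq:sech_Opg}--\eqref{eq:sech_Opg_1pp}; both routes work. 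One minor caution: your description of $\text{SNL}_1$ as ``cubic'' is slightly imprecise, since the $\psi''$-terms there are purely bilinear in $(f,g)$ with no extra $\varepsilon$-smallness---they are instead controlled by the additional $1/\lambda(t)$ coming from \eqref{eq:weight3}, exactly as in the paper's \eqref{eq:E1E2E3E4_new2_1}--\eqref{eq:E1E2E3E4_new2_2}.
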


Notice that this result proves Theorem \ref{MT1}.

\begin{proof}[Proof of Proposition \ref{prop:energy2}]
We shall use Lemma \ref{lem:energy1}. From \eqref{eq:energy1} one has
\begin{equation}\label{eq:energy1_new}
\begin{aligned}
\frac{d}{dt} E_\text{loc}(t) =&~{} \int \psi' fg + (1-2(a+c)) \int \psi' \partial_x f \partial_x g  \\
&+ (3ac-2(a+c))\int \psi'  \partial_x^2 f \partial_x^2 g + 3ac\int \psi'  \partial_x^3 f\partial_x^3 g\\
&+ \text{SNL}_0(t) +\text{SNL}_1(t) +\text{SNL}_h(t).
\end{aligned}
\end{equation}
with
\[
\text{SNL}_0(t) := \frac12 \int \partial_t \psi \left( - a(\px u)^2 - c(\px \eta)^2 + u^2 + \eta^2 + u^2(\eta+h)\right),
\]
and $\text{SNL}_1(t)$ is given in \eqref{eq:E1E2E3E4_new2}. Bounding \eqref{eq:SNL0} is easy following \eqref{psi_psi}: one has
\begin{equation}\label{eq:E1E2E3E4_new2_m1}
\begin{aligned}
& |\text{SNL}_0(t)|   \lesssim {\color{red} \frac1{\lambda(t)} } 
\int \psi (|\eta|^2 +|\partial_x \eta|^2 +|u|^2 +|\partial_x u|^2 +|h|^2 ).
\end{aligned}
\end{equation} 
Following \eqref{eq:L2_est}-\eqref{eq:H1_est}, and estimates \eqref{eq:nonlinear1-1}, \eqref{eq:nonlinear1-2}, \eqref{eq:nonlinear1-3} and \eqref{eq:nonlinear1-4}, we bound $|\text{SNL}_1(t)|$ in \eqref{eq:E1E2E3E4_new2} as follows:
\begin{equation}\label{eq:E1E2E3E4_new2_0}
\begin{aligned}
& |\text{SNL}_1(t)| \\
& \quad  \lesssim \frac1{\lambda(t)}\int \psi (|\eta|^2 +|\partial_x \eta|^2 +|u|^2 +|\partial_x u|^2 +|h|^2 +|\partial_t h|^2 +|\partial_x h|^2 +|\partial_t^2 \partial_xh|^2).
\end{aligned}
\end{equation}
Indeed, one has from \eqref{eq:weight3} and \eqref{eq:L2_est},
\begin{equation}\label{eq:E1E2E3E4_new2_1}
\begin{aligned}
&~{}  \Big| a(c-1)\int\psi''\partial_x^2 f \partial_x g +c(a-1)\int\psi''\partial_x f \partial_x^2 g \Big|  \\
& \qquad \lesssim   \frac1{\lambda^2(t)}\int \psi (|\eta|^2  +|u|^2 ).
\end{aligned}
\end{equation}
Second, using again \eqref{eq:weight3} and \eqref{eq:H1_est},
\begin{equation}\label{eq:E1E2E3E4_new2_2}
\begin{aligned}
&~{}  \Big|  -a\int \psi''\partial_x f  g -c\int \psi'' f \partial_x g  -a\int \psi''\partial_x^2 f \partial_x g  -c\int \psi''\partial_x f \partial_x^2 g \Big|  \\
& \qquad \lesssim  \frac1{\lambda^2(t)}\int \psi (|\eta|^2  +|u|^2 ).
\end{aligned}
\end{equation}
Third, using \eqref{eq:nonlinear1-2},
\begin{equation}\label{eq:E1E2E3E4_new2_3}
\begin{aligned}
&~{}  \Big|\frac{a}{2} \int \psi'\partial_x^2 f  (1-\px^2)^{-1}(u^2) +\frac12\int \psi'f(1-\px^2)^{-1}(u^2) \Big|   \lesssim \frac1{\lambda(t)} \|\eta\|_{H^1}\int \psi |u|^2.
\end{aligned}
\end{equation}
Fourth, using again \eqref{eq:nonlinear1-2},
\begin{equation}\label{eq:E1E2E3E4_new2_4}
\begin{aligned}
&~{}  \Big|c\int \psi'\partial_x^2 g(1-\px^2)^{-1}(u(\eta+h)) + \int \psi'g(1-\px^2)^{-1}(u(\eta+h))  \Big|  \\
& \qquad \lesssim \frac1{\lambda(t)} \| u \|_{H^1}\int \psi (|u|^2 +|\eta|^2 + |h|^2). 
\end{aligned}
\end{equation}
Fifth, using again \eqref{eq:nonlinear1-2},
\begin{equation}\label{eq:E1E2E3E4_new2_5}
\begin{aligned}
&~{}  \Big| \frac12\int \psi'(1-\px^2)^{-1}(u(\eta+h))(1-\px^2)^{-1}(u^2) \Big|  \\
& \qquad \lesssim  \frac1{\lambda(t)} \| (1-\px^2)^{-1}(u^2) \|_{H^1}\int \psi (|u|^2 +|\eta|^2 +|h|^2)\\
& \qquad \lesssim  \frac1{\lambda(t)} \| u \|_{L^2}^2 \int \psi (|u|^2 +|\eta|^2 +|h|^2).
\end{aligned}
\end{equation}
Now, using \eqref{eq:nonlinear1-4},
\begin{equation}\label{eq:E1E2E3E4_new2_6}
\begin{aligned}
&~{}  \Big| \frac{a}{2} \int \psi'\partial_x^3 f (1-\px^2)^{-1}\partial_x (u^2) +\frac12\int \psi'\partial_x f (1-\px^2)^{-1}\partial_x (u^2) \Big| \\
& \qquad \lesssim \frac1{\lambda(t)} \| \eta \|_{H^1}\int \psi (|u|^2  + |\partial_x u |^2).
\end{aligned}
\end{equation}
Similarly,
\begin{equation}\label{eq:E1E2E3E4_new2_7}
\begin{aligned}
&~{}  \Big| c\int \psi'\partial_x^3 g(1-\px^2)^{-1}\partial_x (u(\eta+h)) + \int \psi'\partial_x g (1-\px^2)^{-1}\partial_x (u(\eta+h))\Big| \\
& \qquad \lesssim \frac1{\lambda(t)} \| u \|_{H^1}\int \psi (|u|^2  + |\partial_x u |^2 + |\eta |^2  + |\partial_x \eta |^2 +|h|^2  + |\partial_x h |^2).
\end{aligned}
\end{equation}
Similar to \eqref{eq:E1E2E3E4_new2_5}, and using estimates from \cite{KMM} (see Lemma \ref{lem:est_IOp}),
\begin{equation}\label{eq:E1E2E3E4_new2_8}
\begin{aligned}
&~{}  \Big|\frac12\int \psi'(1-\px^2)^{-1}\partial_x (u(\eta+h)) (1-\px^2)^{-1}\partial_x (u^2) \Big| \\
& \qquad \lesssim  \frac1{\lambda(t)} \| (1-\px^2)^{-1} \partial_x (u^2) \|_{H^1}\\
& \qquad \quad \times \int \psi (|u|^2 +|\partial_xu|^2 +|\eta|^2 +|\partial_x\eta |^2+|h|^2+|\partial_xh|^2)\\
& \qquad \lesssim  \frac1{\lambda(t)} \| u \|_{H^1}^2 \int \psi (|u|^2 +|\partial_xu|^2 +|\eta|^2 +|\partial_x\eta |^2+|h|^2+|\partial_xh|^2).
\end{aligned}
\end{equation}
Similar as in \eqref{eq:E1E2E3E4_new2_7},
\begin{equation}\label{eq:E1E2E3E4_new2_9}
\begin{aligned}
&~{}  \Big| c\int \psi'\partial_x^3 g(1-\px^2)^{-1}\partial_x (u(\eta+h)) + \int \psi'\partial_x g (1-\px^2)^{-1}\partial_x (u(\eta+h)) \Big| \\
& \qquad \lesssim \frac1{\lambda(t)} \| u \|_{H^1}\int \psi (|u|^2  + |\partial_x u |^2 + |\eta |^2  + |\partial_x \eta |^2 +|h|^2  + |\partial_x h |^2).
\end{aligned}
\end{equation}
Now we use \eqref{eq:nonlinear1-3},
\begin{equation}\label{eq:E1E2E3E4_new2_10}
\begin{aligned}
&~{}  \Big|\frac12 a\int \partial_x(\psi' \partial_xu)(1-\px^2)^{-1}(u^2)   + c\int \partial_x(\psi' \partial_x\eta)(1-\px^2)^{-1}(u(\eta+h))   \Big| \\
& \qquad \lesssim \frac{1}{\lambda(t)}  (\|u\|_{H^1} +\|\eta\|_{H^1}) \int \psi (|u|^2  + |\partial_x u |^2 +|\eta|^2  + |\partial_x \eta |^2 + |h|^2  + |\partial_x h |^2).
\end{aligned}
\end{equation}
Finally, using H\"older and \cite{KMM}, 
\begin{equation}\label{eq:E1E2E3E4_new2_11}
\begin{aligned}
&~{}  \Big| ac_1 \int \psi' \partial_xu  (1-\partial^2_x)^{-1}  \partial_t^2 \partial_x h  + c\int \psi'   \partial_x \eta (1-\partial^2_x)^{-1}\left(-1+ a_1 \partial_x^2\right) \partial_t h\Big| \\
& \quad \lesssim \frac{1}{\lambda(t)} \int \psi ( |\partial_x u |^2  + |\partial_x \eta |^2 )\\
& \quad \quad+ \frac{1}{\lambda(t)} \int \psi ( |(1-\partial^2_x)^{-1}  \partial_t^2 \partial_xh|^2  + |\partial_t h |^2 + | (1-\partial^2_x)^{-1}\partial_t h|^2)\\
& \quad \lesssim \frac{1}{\lambda(t)}  \int \psi ( |\partial_x u |^2  + |\partial_x \eta |^2 +  |\partial_t^2 \partial_xh|^2  + |\partial_t h |^2 ).
\end{aligned}
\end{equation}
Now, let us deal with bottom's interaction. Recalling $\text{SNL}_h(t)$ given in \eqref{eq:SNLh} along with \eqref{psi_psi}, the terms on the first line satisfy
\[
\begin{aligned}
&\left\vert  \frac12 \int \psi u^2 \partial_t h 
+c_1  \int \psi  F \partial_t^2\partial_x h +   \int \psi  G (-1+a_1\partial_x^2)\partial_t h \right\vert 
\\
&~{} \leq
\|u\|_{H^1}^4 \|\partial_t h \|_{L^2} +\|F\|_{L^2} \|\partial_t^2\partial_x h  \|_{L^2} +\|G\|_{L^2} \|(-1+a_1\partial_x^2)\partial_t h \|_{L^2} 
\\
&~{} \leq
\|u\|_{H^1}^4 \|\partial_t h \|_{L^2} +\|F\|_{L^2} \|\partial_t^2\partial_x h  \|_{L^2} +\|G\|_{L^2} \|\partial_t h \|_{H^1} 
\\
&~{} \lesssim
 \|\partial_t h \|_{L^2} +\|\partial_t^2\partial_x h  \|_{L^2} + \|\partial_t h \|_{H^1} .
\end{aligned}
\]
Then, following the same ideas applied to $\text{SNL}_0$ and $\text{SNL}_1$, we obtain that 
\begin{equation}\label{eq:SNLh_b}
\begin{aligned}
|\text{SNL}_h(t)| \lesssim  & ~{}
 \|\partial_t h \|_{L^2} +\|\partial_t^2\partial_x h  \|_{L^2} + \|\partial_t h \|_{H^1} 
 \\&+\frac{1}{\lambda(t)}  \int \psi (|u|^2+ |\partial_x u |^2+|\eta|  + |\partial_x \eta |^2 +  |\partial_t^2 \partial_xh|^2  + |\partial_t h |^2 ).
\end{aligned}
\end{equation}
Gathering \eqref{eq:E1E2E3E4_new2_1}, \eqref{eq:E1E2E3E4_new2_2}, \eqref{eq:E1E2E3E4_new2_3}, \eqref{eq:E1E2E3E4_new2_4}, \eqref{eq:E1E2E3E4_new2_5}, \eqref{eq:E1E2E3E4_new2_6}, \eqref{eq:E1E2E3E4_new2_7}, \eqref{eq:E1E2E3E4_new2_8}, \eqref{eq:E1E2E3E4_new2_9}, \eqref{eq:E1E2E3E4_new2_10}, \eqref{eq:E1E2E3E4_new2_11} and \eqref{eq:SNLh_b}, we obtain \eqref{eq:E1E2E3E4_new2_0}.

Thanks to \eqref{eq:E1E2E3E4_new2_0} and \eqref{eq:E1E2E3E4_new2_m1}, we conclude in \eqref{eq:energy1_new},
\begin{equation}\label{eq:energy1_2}
\begin{aligned}
&\left| \frac{d}{dt} E_\text{loc}(t)\right| 
\\
& \quad  \lesssim  \frac1{\lambda(t)}\int   \psi  (|\eta|^2 +|\partial_x \eta|^2 +|u|^2 +|\partial_x u|^2 +|h|^2 +|\partial_t h|^2 +|\partial_x h|^2 +|\partial_t^2 \partial_xh|^2)
\\&
\qquad \qquad +\|\partial_t h \|_{L^2} +\|\partial_t^2\partial_x h  \|_{L^2} + \|\partial_t h \|_{H^1} .
\end{aligned}
\end{equation}
Integrating \eqref{eq:energy1_2} on $[t, t_n]$, for $t < t_n$ as in \eqref{eq:virial2-2}, \eqref{hyp_h} and Proposition \ref{prop:virial2} yield
\[
\begin{aligned}
& \left|E_\text{loc}(t_n) - E_\text{loc}(t) \right| \\
& \quad \lesssim \int_t^{\infty}\!\!   \frac{1}{\lambda(t)}  \int \sech^2 \left(\frac{x}{\lambda(t)}\right) (u^2 + (\px u)^2 + \eta^2 + (\px \eta)^2)(t,x) \; dxdt 
\\
& \quad \quad + \int_t^{\infty}\!\!   \frac{1}{\lambda(t)}  \int \sech^2 \left(\frac{x}{\lambda(t)}\right) (h^2 + (\partial_t h)^2 +(\partial_x h)^2 +(\partial_t^2 \partial_xh)^2)(t,x) \; dxdt 
\\
&\qquad \quad +\int_{t}^{\infty} (\|\partial_t h \|_{L^2} +\|\partial_t^2\partial_x h  \|_{L^2} + \|\partial_t h \|_{H^1} ) 
 < \infty.
\end{aligned}
\]
Note that, from the Sobolev embedding (with $\norm{\eta}_{H^1} \lesssim \ve$ and $\| h\|_{H^1}\lesssim \varepsilon$), we have
\[
|E_\text{loc}(t_n)| \lesssim \int \psi(u^2 + (\px u)^2 + \eta^2 + (\px \eta)^2) (t_n) \longrightarrow 0, \quad \mbox{ as } n \to \infty,
\]
thanks to \eqref{eq:virial2-2}. Thus, by sending $t_n \to \infty$, we have
\[\begin{aligned}
\int \sech^4\left( \frac{x}{\lambda(t)} \right)& (u^2 + (\px u)^2 + \eta^2 + (\px \eta)^2 + \eta u^2) (t) \\
&\lesssim \int_t^{\infty}  \frac{1}{\lambda(t)}  ~{}
\!\!\int \sech^2 \left(\frac{x}{\lambda(t)}\right) (u^2 + (\px u)^2 + \eta^2 + (\px \eta)^2)
\\&\qquad \quad+\int_{t}^{\infty} (\|\partial_t h \|_{L^2} +\|\partial_t^2\partial_x h  \|_{L^2} + \|\partial_t h \|_{H^1} ) .
\end{aligned}\]
Once again, the Sobolev embedding (with $\norm{\eta}_{H^1} \lesssim \ve$) guarantees that
\[
\begin{aligned}
& \lim_{t \to \infty}\int_{-\lambda(t)}^{\lambda(t)} (u^2 + (\px u)^2 + \eta^2 + (\px \eta)^2) (t)\\
& \quad \lesssim  \lim_{t \to \infty}\int \sech^4\left( \frac{x}{\lambda(t)} \right) (u^2 + (\px u)^2 + \eta^2 + (\px \eta)^2) (t) = 0,
\end{aligned}
\]
which proves \eqref{eq:energy2.1}, and completes the proof of Theorem \ref{MT1}, part \eqref{Conclusion_0}.
\end{proof}


\providecommand{\bysame}{\leavevmode\hbox to3em{\hrulefill}\thinspace}
\providecommand{\MR}{\relax\ifhmode\unskip\space\fi MR }
\providecommand{\MRhref}[2]{%
  \href{http://www.ams.org/mathscinet-getitem?mr=#1}{#2}
}
\providecommand{\href}[2]{#2}

\end{document}